\documentclass[12pt]{amsart}    
\usepackage{graphicx}
\usepackage{amsthm}
\usepackage{amsmath,amsfonts,amssymb,mathrsfs}
\usepackage{color}
\usepackage{epstopdf}
\usepackage{subfigure}
\usepackage[utf8]{inputenc}
\usepackage{hyperref}
\usepackage{cleveref}
\usepackage{cite}
\usepackage{bookmark}
\newtheorem{theorem}{Theorem}[section]
\newtheorem{lem}[theorem]{Lemma}
\newtheorem{cor}[theorem]{Corollary}
\newtheorem{prop}[theorem]{Proposition}
\newtheorem{rem}[theorem]{Remark}
\newtheorem{exa}[theorem]{Example}
\newtheorem{defin}[theorem]{Definition}

\newcommand{\GL}{{\rm GL}}
\newcommand{\SL}{{\rm SL}}
\newcommand{\SO}{{\rm SO}}

\newcommand{\du}{{\rm d}\,}
\newcommand{\ol}{\overline}
\let\ap=\alpha
\newcommand{\C}{{\mathbb C}}
\newcommand{\R}{{\mathbb R}}

\newcommand{\Z}{{\mathbb Z}}
\newcommand{\N}{{\mathbb N}}
\newcommand{\Hm}{{\mathbb H}}

\newcommand{\G}{{\mathcal G}}

\let\mi=\setminus



\let\ap=\alpha


\begin{document}

	\title{Cartan subgroups in connected locally compact groups}
	
	\author[A.\ Mandal]{Arunava Mandal}
	\address{Department of Mathematics,
		Indian Institute of Technology, Roorkee,
		Uttarakhand 247667, India}  
	\email{arunava@ma.iitr.ac.in, a.arunavamandal@gmail.com} 
	\author[R.\ Shah]{Riddhi Shah}
	\address{School of Physical Sciences, Jawaharlal Nehru University, New Delhi 110016,
		India}  
	\email{rshah@jnu.ac.in, riddhi.kausti@gmail.com} 
	
	\keywords{Connected locally compact groups, Cartan subgroups, Cartan subalgebras, power maps}
	
	\subjclass[2010]{22D05}
	
	\date{}

\maketitle

\begin{abstract} We define Cartan subgroups in connected locally compact groups, which extends the classical 
	notion of Cartan subgroups in Lie groups. We prove their existence and justify our choice of the definition 
	which differs from the one given by Chevalley on general groups. Apart from proving some properties of 
	Cartan subgroups, we show that the Cartan subgroups of the quotient groups are precisely the images of 
	Cartan subgroups of the ambient group. We establish the so-called `Levi' decomposition of Cartan 
	subgroups which extends W\"ustner's decomposition theorem and our earlier results for Lie groups. 
	We also show that the centraliser of any maximal torus of the radical is connected and its Cartan subgroups 
	are also Cartan subgroups of the ambient group; moreover, every Cartan subgroup arises this way. 
	We prove that Cartan subalgebras defined by Hofmann and Morris in pro-Lie algebras are the 
	same as those corresponding to Cartan subgroups in case of pro-Lie algebras of connected locally 
	compact groups, and that they are nilpotent. We characterise density of the image of a power map 
	in a connected locally compact group in terms of its surjectivity on all Cartan subgroups, and show that 
	weak exponentiality of the group is equivalent to the condition that all its Cartan subgroups are connected.
\end{abstract}

	\section{Introduction}
A Cartan subgroup is a classical object in Lie theory defined by C.\ Chevalley \cite{Ch} in 1955. Chevalley defined a purely 
intrinsic notion of Cartan subgroups $C$ of an abstract group $G$; it is a maximal nilpotent subgroup of $G$ with the additional 
property that every subgroup which is normal and of finite index in $C$ is also of finite index in its own normaliser in $G$. It is shown 
in \cite{Ch} that Cartan subgroups of a Zariski connected algebraic group $G$ are exactly those irreducible algebraic subgroups of $G$ 
whose Lie algebras are the Cartan subalgebras of the Lie algebra of $G$. Note that Cartan subgroups of a connected Lie group $G$ 
are closed in $G$, but need not be connected, and their Lie algebras are the Cartan subalgebras of the Lie algebra of $G$; more precisely, 
each Cartan subalgebra is the Lie algebra of exactly one Cartan subgroup. The structural aspect of Cartan subgroups have been studied 
extensively in algebraic groups or Lie groups by Borel \cite{Bo}, Goto \cite{Go}, Togo \cite{To}, Prasad and Raghunathan \cite{PRagh}, 
and many others. Cartan subgroups also play a major role in the study of weak exponentiality and power maps 
(see e.g. \cite{HoMu, N, Wu1, DM, BM, M1, M2, MS}).

In this article, we explore the appropriate notion of Cartan subgroups in a connected locally 
compact group which is consistent with the well-established definition of Cartan subgroups in a 
connected Lie group. Our definition of Cartan subgroups slightly differs from Chevalley's definition 
mentioned in the beginning. To put it in a suitable framework, let us recall the other well-known 
definition of Cartan subgroups in a connected Lie group given in \cite{N}.

Let $G$ be a connected Lie group with the Lie algebra $\G$. Let $\mathcal C$ be a Cartan subalgebra of $\G$, i.e.\ it is a (maximal) 
nilpotent Lie subalgebra which coincides with its own normaliser. Let  $\mathcal C_\C$ (resp.\ $\mathcal G_\C$) be the complexification of $\mathcal C$ 
(resp.\  $\G$) and let $\Delta$ be the set of roots of $\G_\C$ belonging to $\mathcal C_\C$. Then 
$\G_\C= \mathcal C_\C+\Sigma_{\alpha\in\Delta}\G_ \C^{\alpha}$. Recall that $N_G(\mathcal C):=\{g\in G\mid{\rm Ad}(g)(\mathcal C)=  \mathcal C\}.$  

\begin{defin} [\cite{N}]\label{Neeb}
	A closed subgroup $C$ of a connected Lie group $G$ is called a Cartan subgroup if the following hold:
	\begin{enumerate}
		\item[{${\rm (I)}$}] The Lie algebra of $C$, denoted by $\mathcal C$, is a Cartan subalgebra of $\G$.
		\item[{${\rm (II)}$}] $C(\mathcal C):=
		\{g\in N_G(\mathcal C)\mid\alpha\circ{\rm Ad}(g)|_{\mathcal C_\C}=\alpha \mbox{ for all }  \alpha\in\Delta\}=C$. 	
	\end{enumerate}
\end{defin}

Definition \ref{Neeb} is equivalent to the definition given by Chevalley on any connected Lie group \cite{N}. The absence of root space 
decomposition in a connected locally compact group is a primary obstruction to extending Definition \ref{Neeb}, whereas, {\it a priori}, 
there is no problem to consider Chevalley's definition of Cartan subgroups as it is in the context of connected locally compact groups, 
since it is purely group-theoretic. However, Chevalley's definition does not work even for compact connected (non-Lie) groups as illustrated 
by Example \ref{cpt-u} below. This suggests that Chevalley's definition of Cartan subgroups needs to be modified 
in case of general connected locally compact groups. In this framework, we define Cartan subgroup as
a maximal nilpotent subgroup, with the modified condition that for any closed normal subgroup $L$ of $C$, if $C/L$ is
compact and totally disconnected, then $N_G(L)/L$ is compact and totally disconnected (see Definition \ref{Cartan in l.c}).

First, we show the existence of Cartan subgroups in any connected locally compact group $G$ (see Theorem \ref{existence-cartan})
and give examples of Cartan subgroups in $G$ which are not Lie groups (see Examples \ref{cpt-u}, \ref{non-Lie} and \ref{non-ab2}).  
We prove that every Cartan subgroup $C$ is a projective limit of Cartan subgroups in Lie groups (see Proposition \ref{mod-cpt}). There 
is a notion of Cartan subalgebras of a pro-Lie algebra introduced by Hofmann and Morris \cite{HoMo2} which are pro-nilpotent. Here, 
we prove that the Lie algebra of a Cartan subgroup is a Cartan subalgebra of the pro-Lie algebra of the connected locally compact 
group. In fact, we show that there is a one-to-one correspondence between Cartan subalgebras and Cartan subgroups (see Theorem 
\ref{cartan-subalgebra}). This implies in particular that the Cartan subalgebras in this case are nilpotent.

A connected locally compact group $G$ is a projective limit of connected Lie groups; this fact is part of the solution of 
Hilbert's fifth problem \cite{Y}. Therefore, 
one expects to extend various structural results about Cartan subgroups from Lie groups to all connected locally compact groups. 
Here, we obtain various characterisations of Cartan subgroups, extending our recent results obtained in \cite{MS}. One would 
expect to prove results on such groups $G$ by a technique of  passing through the limit and using the results on Lie groups at hand. 
Although this technique works in some cases, however, to a large extent, it does not help to prove many of our results. Instead, 
we need to use various structural results on connected locally compact groups, Levi decompositions, and reduction processes.

As in the case of connected Lie groups, Cartan subgroups of a connected locally compact group which is a compact extension 
of a closed connected solvable normal subgroup turn out to be connected, and conjugate to each other. For connected compact groups, 
they are precisely the maximal connected abelian subgroups (see Proposition \ref{abelian-conj}). We also give examples of connected 
locally compact groups with non-abelian Cartan subgroups (see Examples \ref{non-ab1} and \ref{non-ab2}). 
Unlike in the case of Lie groups, a connected locally compact group $G$ can be an arbitrary product of nontrivial connected locally 
compact groups $G_\alpha$, in which all but finitely many are compact; then the arbitrary (infinite) product of Cartan subgroups 
$C_\alpha$ of  $G_\alpha$ is again a Cartan subgroup of $G$. Conversely, each Cartan subgroup of $G$ is of this form 
(see Proposition \ref{product}). 

Now we list some important generalisations from connected Lie groups to locally compact groups. In \S\,5, we characterise 
Cartan subgroups in quotient groups. More precisely, if $H$ is any closed normal subgroup of a connected 
locally compact group $G$, then for a Cartan subgroup  $C$ of $G$, the image $CH/H$ is a Cartan subgroup of $G/H$. 
Conversely, any Cartan subgroup in the quotient group $G/H$ is the image of some Cartan subgroup in $G$ 
(see Theorem \ref{quotient}). This is instrumental in deducing one of our main important results on a `Levi' decomposition 
of Cartan subgroups (see \S\,6). To put it in perspective, let us recall that there is a Levi decomposition for 
a connected locally compact group $G$,  namely, $G = SR$, where $R$ is the radical of $G$, and $S$ is either trivial or $S$ is 
a Levi subgroup, i.e.\ a maximal connected semisimple subgroup of $G$ (see \cite{Mat}, and \S\,6 for details).  Generalising 
W\"ustner's decomposition theorem \cite{Wu1} on Lie groups and Theorem 1.1 of \cite{MS}, we prove that given a 
Cartan subgroup $C$ of $G$, there exists a Levi decomposition $G=SR$ such that $C=(C\cap S)(C\cap R)$ with additional 
properties that  $(i)$ $C\cap S$ is a Cartan subgroup of $S$, $(ii)$ elements of $C\cap S$ centralise $C\cap R$, and $(iii)$ 
$Z_R(C\cap S)$, the centraliser of $C\cap S$ in $R$, is a closed connected solvable group and $C\cap R$ is a Cartan subgroup 
of $Z_R(C\cap S)$, which in particular implies that $C\cap R$ is connected. Conversely, given any Levi subgroup $S$ of $G$ and 
a Cartan subgroup $C_S$ of $S$, $Z_R(C_S)$ is connected and for any Cartan subgroup $C_{Z_R(C_S)}$ of $Z_R(C_S)$, if we 
set $C:=C_SC_{Z_R(C_S)}$, then $C$ is a Cartan subgroup of $G$ (see Theorem \ref{decomp}). Moreover, we prove that 
$C=Z_G(C^0)C^0$ in Corollary \ref{centraliser-conn}, which reiterates the one-to-one correspondence between Cartan subgroups 
and Cartan subalgebras in connected locally compact groups, as is the case in Lie groups. We also characterise Cartan subgroups 
of $G$ in terms of Cartan subgroups of $Z_G(T_R)$, where $T_R$ is any maximal compact subgroup of 
the radical $R$ in $G$, and $Z_G(T_R)$ is the centraliser of $T_R$ in $G$, which is shown to be connected.  
We show that each Cartan subgroup of $Z_G(T_R)$ is a Cartan subgroup of $G$, and conversely, any Cartan 
subgroup $C$ of $G$ is a Cartan subgroup of $Z_G(T_R)$ for some $T_R\subset C$ (see Theorem \ref{cartan-torus}). 

As an application, we give a characterisation of the density of the image of a power map on a locally compact group 
through Cartan subgroups in Theorem \ref{power-dense} for which we use Proposition \ref{pk-quo}, both of these generalise earlier 
results on Lie groups in this context (see \cite{BM} and \cite{MS}). We also correlate the density of images of power maps with weak
exponentiality for connected locally compact groups (see Corollary \ref{C-power-cartan}); the latter property has 
attracted much attention in the literature and is studied by many mathematicians, see \cite{HoMu, N, BM, M1, M2, MS, MR}. 

\section{Prerequisites and notations} 

Throughout the paper, we use the following notations. Let $G$ be a locally compact (Hausdorff) group and let $H$ be a subgroup of $G$. 
Let $H^0$ denote the connected component of the identity in $H$, $D(H)=\ol{[H,H]}$, the closure of the commutator subgroup 
$[H,H]$ in $H$ and let $Z(H)$ denote the center of $H$, $Z^0(H)=(Z(H))^0$, let $N_G(H)$ denote 
the normaliser of $H$ in $G$ and let $N_G^0(H)=(N_G(H))^0$. All these subgroups are closed (resp.\ characteristic) in $G$ 
if $H$ is closed (resp.\ characteristic) in $G$. For a subset $B$ of $G$, let $\ol{B}$ denote the closure of $B$ in $G$. 
Then $\ol{B}$ is a subgroup if $B$ is so, and it is connected (resp.\ normal in $G$) if $B$ is connected (resp.\ normal in $G$). Let $Z_H(B)$ 
denote the centraliser of the set $B$ in the subgroup $H$ of $G$. It is a closed subgroup of $H$ and let $Z^0_H(B)=(Z_H(B))^0$. If $B$ is a 
subgroup of $G$, then let $N_H(B)=N_G(B)\cap H$ and $N^0_H(B)=(N_H(B))^0$. 
For a nilpotent group $G$, let $l(G)$ denote the length of the central series of $G$.

Any connected locally compact group $G$ has a largest connected solvable normal subgroup which is known as its radical
(see Theorem 15 in \cite{I}, see also Theorem 6.11 in \cite{Gl}). Note that the closure of a solvable group is solvable, and hence the 
radical is closed in $G$. If the radical of $G$ is trivial and $G$ is nontrivial, then $G$ is said to be semisimple.

It follows from a classical theorem of Yamabe \cite{Y} that every connected locally compact group is a projective limit of Lie groups. 
The case of solvable groups was proven earlier by Iwasawa \cite{I} as well as Gleason \cite{Gl}.

\begin{theorem} {\rm \cite[Theorem 5$'$]{Y}} \label{Yam-Glea}
	Let $G$ be a connected locally compact group. Then there exists $\{K_\alpha\}_{\alpha\in\Lambda}$, 
	a collection of compact normal subgroups in $G$, such that $G/K_\alpha$ is a Lie group for each 
	$\alpha$ and $\cap_{\alpha\in\Lambda} K_\alpha=\{e\}$,  i.e.\ $G=\varprojlim_\ap G/K_\alpha$.
\end{theorem}

	\begin{rem} \label{Kap} For a connected locally compact group $G$, let $\{K_\alpha\}_{\alpha\in\Lambda}$
	be a collection of compact normal subgroups in $G$ as in the above theorem, for $\alpha\in\Lambda$ denote by 
	$G_\alpha:=G/K_\alpha$, the quotient group, which is a Lie group, and by $\pi_\ap:G\to G_\alpha$ the natural projection.  
	So $G=\varprojlim_{\ap\in\Lambda} G_\alpha$, the projective limit of Lie groups $G_\alpha$. We may  often drop the 
	indexing set $\Lambda$.
\end{rem}

Throughout, we will use the notations as in Remark \ref{Kap} when we mention $K_\ap$, $G_\ap$ and $\pi_\ap$ for a connected locally 
compact group $G$.

\section{Cartan subgroups in a connected locally compact group}

In this section, we define the notion of Cartan subgroups in the context of connected locally compact groups. 
We also prove the existence of Cartan subgroups in a connected locally compact group.

\begin{defin} \label{Cartan in l.c}
	A subgroup $C$ of a connected locally compact group $G$ is said to be a {\it Cartan} subgroup of $G$ if the following conditions
	are satisfied:
	\begin{enumerate}
		\item[{${\rm (I)}$}] $C$ is a maximal nilpotent subgroup of $G$.
		\item[{${\rm (II)}$}] If $L$ is any closed normal subgroup of $C$ such that 
		$C/L$ is compact and totally disconnected, then 
		$N_G(L)/L$ is compact and totally disconnected, where $N_G(L)$ is the normaliser of $L$ in $G$.
	\end{enumerate}	
\end{defin}

\begin{rem}
	If we take $L=C$ in Definition \ref{Cartan in l.c}, we get that $N_G(C)/C$ is compact and totally disconnected. In particular, 
	if $C$ is normal in $G$, then $G/C$ is both connected and totally disconnected, i.e.\ $G=C$. Therefore, we have that $C$ 
	is a normal Cartan subgroup of $G$ if and only $G=C$, if and only if $G$ is nilpotent.
\end{rem}

\begin{rem}\label{Remark on Cartan}
	Note that if $G$ is a connected Lie group, then the quotients $C/L$ and $N_G(L)/L$ are Lie groups and any compact 
	totally disconnected Lie group is discrete and finite. Therefore, Definition \ref{Cartan in l.c} for Cartan subgroups extends 
	the one given by Chevalley in the case of Lie groups. We will also show in Proposition \ref{mod-cpt}, that in a connected 
	locally compact group,  every Cartan subgroup arises as a projective limit of Cartan subgroups of Lie groups. 
	Example \ref{cpt-u} will justify our choice of condition {\rm (II)} for Cartan subgroups in connected locally compact 
	{\rm (}non-Lie{\rm )} groups, as in a compact connected infinite dimensional group $G$, which is a product of 
	certain semisimple Lie groups, the maximal nilpotent group is a  compact connected infinite dimensional 
	abelian group which has infinite index in its normaliser and the quotient is totally disconnected. 
\end{rem}

As the closure of a nilpotent group is a nilpotent group, every Cartan subgroup is closed. Since it is a maximal nilpotent group in $G$, 
it contains the center $Z(G)$ of $G$. 

We will prove the existence of a Cartan subgroup in a connected locally compact group. Before that we prove some preliminary 
results which are needed for this purpose. 
The following lemma is known in the case of connected Lie groups. 

\begin{lem}\label{central-cartan}
	Let $G$ be a connected locally compact group. Let $Z$ be a closed central subgroup of $G$ and let $C$ be a Cartan 
	subgroup of $G$. Then the following hold:
	
	\begin{enumerate} 
		\item[{$(1)$}] $Z\subset C$. 
		\item[{$(2)$}] $C/Z$ is a Cartan subgroup of $G/Z$. Conversely, for any subgroup $C'$ which contains $Z$, if $C'/Z$ is a 
		Cartan subgroup of $G/Z$, then $C'$ is a Cartan subgroup of $G$.
		\item[{$(3)$}] $Z_n\subset C$ and $C/Z_n$ is a Cartan subgroup of $G/Z_n$ for closed normal subgroups $Z_n$ 
		of $G$, where $Z_0=Z(G)$, $Z_n\subset Z_{n+1}$ and  $Z_{n+1}/Z_n$ is the center of $G/Z_n$, $n\in\N\cup\{0\}$. 
	\end{enumerate}
\end{lem}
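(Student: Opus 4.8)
The plan is to handle the three parts in order, with (1) and the two directions of (2) carrying the real content and (3) following by a short induction. For (1), I would exploit centrality of $Z$: then $CZ$ is a subgroup, and since $Z\subseteq Z(CZ)$ while $CZ/Z\cong C/(C\cap Z)$ is nilpotent (a quotient of the nilpotent group $C$), the group $CZ$ is itself nilpotent — concretely $\gamma_i(CZ)=\gamma_i(C)$ for $i\ge 2$, since commutators ignore central factors. As $C\subseteq CZ$ and $C$ is maximal nilpotent by condition (I) of Definition \ref{Cartan in l.c}, maximality forces $CZ=C$, i.e.\ $Z\subseteq C$.

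For the forward direction of (2), write $\pi\colon G\to G/Z$; as $Z\subseteq C$ the image $C/Z=\pi(C)$ is defined. Condition (I) holds because $C/Z$ is nilpotent as a quotient, and if $C/Z\subseteq M/Z$ with $M/Z$ nilpotent then $M=\pi^{-1}(M/Z)$ is nilpotent (a central extension of a nilpotent group is nilpotent: $\gamma_{k+1}(M)\subseteq Z$ forces $\gamma_{k+2}(M)=[\gamma_{k+1}(M),M]\subseteq[Z,M]=\{e\}$), so maximality of $C$ gives $M=C$. For condition (II) I would use the bijection between closed normal subgroups of $C/Z$ and those of $C$ containing $Z$: a closed normal $\ol L\triangleleft C/Z$ with $(C/Z)/\ol L$ compact and totally disconnected corresponds to $L=\pi^{-1}(\ol L)\supseteq Z$ with $C/L\cong (C/Z)/\ol L$ compact and totally disconnected, whence condition (II) for $C$ yields $N_G(L)/L$ compact and totally disconnected; since $Z\subseteq L$ one has $N_{G/Z}(\ol L)=N_G(L)/Z$, so $N_{G/Z}(\ol L)/\ol L\cong N_G(L)/L$ has the required property.

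The converse of (2) is where the main obstacle lies. Here $C'$ is closed (it is the preimage of the closed Cartan subgroup $C'/Z$), and maximal nilpotency of $C'$ follows as in the forward direction. The difficulty is condition (II): a closed normal $L\triangleleft C'$ with $C'/L$ compact and totally disconnected need \emph{not} contain $Z$, so I cannot descend to $G/Z$ directly. My plan is to pass to $\tilde L:=\ol{LZ}$ (closure in $G$, which lands in the closed group $C'$). Since the quotient map $C'\to C'/L$ is open, $\tilde L/L=\ol{ZL/L}$ is a closed, hence compact totally disconnected, subgroup of $C'/L$; thus $C'/\tilde L$ is compact totally disconnected and $\tilde L\supseteq Z$, so applying condition (II) of the Cartan subgroup $C'/Z$ of $G/Z$ to $\tilde L/Z$ gives that $N_G(\tilde L)/\tilde L$ is compact and totally disconnected. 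It then remains to descend from $\tilde L$ back to $L$: conjugation preserves $L$ and the central $Z$, so every element of $N_G(L)$ normalises $LZ$, hence $\tilde L$, producing a chain $L\triangleleft \tilde L\triangleleft N_G(L)\subseteq N_G(\tilde L)$ of closed subgroups. The extension $1\to \tilde L/L\to N_G(L)/L\to N_G(L)/\tilde L\to 1$ then exhibits $N_G(L)/L$ as an extension of the closed subgroup $N_G(L)/\tilde L$ of the compact totally disconnected group $N_G(\tilde L)/\tilde L$ by the compact totally disconnected group $\tilde L/L$; as an extension of profinite groups is profinite, $N_G(L)/L$ is compact and totally disconnected. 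The delicate point throughout is topological, namely securing closedness of $\tilde L$, of $N_G(L)$ inside $N_G(\tilde L)$, and of the intermediate quotients, so that the profinite-extension argument applies.

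Finally, (3) is a direct induction on $n$ built from (1) and (2). For $n=0$, $Z_0=Z(G)$ is closed central, so (1) gives $Z_0\subseteq C$ and (2) gives $C/Z_0$ Cartan in $G/Z_0$. Assuming $Z_n\subseteq C$ and $C/Z_n$ Cartan in the connected locally compact group $G/Z_n$, I would apply (1) and (2) to $G/Z_n$ with its closed central subgroup $Z_{n+1}/Z_n=Z(G/Z_n)$: this yields $Z_{n+1}/Z_n\subseteq C/Z_n$, i.e.\ $Z_{n+1}\subseteq C$, and that $(C/Z_n)/(Z_{n+1}/Z_n)\cong C/Z_{n+1}$ is Cartan in $(G/Z_n)/(Z_{n+1}/Z_n)\cong G/Z_{n+1}$, closing the induction.
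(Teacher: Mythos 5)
Your proposal is correct and follows essentially the same route as the paper's proof: part (1) via maximality of $C$ against central elements, the forward direction of (2) via the correspondence $L_1=L/Z$ with $Z\subset L\subset C$, the converse of (2) by passing to $\ol{LZ}$, applying condition (II) of $C'/Z$ to $\ol{LZ}/Z$, and concluding through the extension of the compact totally disconnected groups $N_G(L)/\ol{LZ}$ and $\ol{LZ}/L$, and (3) by the evident induction. The only difference is that you spell out explicitly (the profinite-extension step and the topological closedness checks) what the paper leaves implicit.
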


\begin{proof}
	$(1):$ Since $C$ is a maximal nilpotent group, $Z\subset Z(G)\subset C$. 
	
	\smallskip
	\noindent $(2):$ Let $C$ be a Cartan subgroup of $G$. Let $\pi:G\to G/Z$ be the natural projection, 
	$C_1:=C/Z$ and let $G_1:=G/Z$. It is easy to see that $C_1$ is also a maximal nilpotent subgroup of $G_1$. 
	Let $L_1$ be a closed normal subgroup of $C_1$. Then $L_1$ is of the form $L_1=L/Z$, where $L=\pi^{-1}(L_1)$, 
	which is a closed  normal subgroup of $G$ and $Z\subset L\subset C$.  Suppose that $C_1/L_1$ is compact and totally   
	disconnected. Then $C/L$, being isomorphic to $C_1/L_1$, is also compact and totally disconnected, and hence 
	so is $N_G(L)/L$. Therefore, $\pi(N_G(L))/\pi(L)$ is compact and totally disconnected. Since $Z\subset L$, 
	we have $\pi(N_G(L))=N_{G_1}(L_1)$. Thus $N_{G/Z}(L_1)/L_1$ is compact and totally disconnected. 
	This proves that $C/Z$ is a Cartan subgroup in $G/Z$.
	
	Conversely, suppose $C'$ contains $Z$ and $C'/Z$ is a Cartan subgroup of $G/Z$. Then it is easy to see that $C'$ 
	is a maximal nilpotent subgroup of $G$. Let $L$ be a closed normal subgroup of $C'$ such that $C'/L$ is compact 
	and totally disconnected. Then $\ol{LZ}$ is a closed normal subgroup of $C'$ and $C'/\ol{LZ}$, being a quotient 
	group of $C'/L$, is compact totally disconnected. Moreover, $C'/\ol{LZ}$ is isomorphic to $(C'/Z)/(\ol{LZ}/Z)$. Since 
	$C'/Z$ is a Cartan subgroup of $G/Z$, we get that $N_{G/Z}(\ol{LZ}/Z)/\ol{(LZ}/Z)$ is compact and totally disconnected. 
	Since $N_G(\ol{LZ})/Z\subset N_{G/Z}(\ol{LZ}/Z)$ and $\ol{LZ}\subset N_G(L)\subset N_G(\ol{LZ})$, 
	we get that $N_{G}(\ol{LZ})/\ol{LZ}$, and hence $N_G(L)/\ol{LZ}$ is compact and totally disconnected. 
	As $\ol{LZ}/L$, being a subset of $C/L$, is compact and totally disconnected, so is 
	$N_G(L)/L$. This proves that $C'$ is a Cartan subgroup of $G$. 
	
	\smallskip
	\noindent $(3):$ It follows easily from (1) and (2). 
\end{proof}

We need the following crucial lemma which is deduced from some results in \cite{I} and some simple observations. 
For any subgroup $H$ of $G$, recall the notations $Z_G(H)$, $Z_G^0(H)$ and $D(H)$ defined before. These are all normal 
subgroups in $G$ if $H$ is so. If $H$ is compact and connected, then every product of commutators is a commutator, i.e.\ 
$[H,H]=\{hkh^{-1}k^{-1}\mid h,k\in H\}$, and hence 
$[H,H]$ is closed in $H$ and $D(H)=[H,H]$ (cf.\ \cite{HoMo1}, Theorem 9.2).  Recall that a nontrivial connected compact group $H$ is 
said to be semisimple if $D(H)=H$. Note that this definition is equivalent to the one for a nontrivial connected locally compact group which is 
semisimple if its radical is trivial. For any connected compact group $H$, if $D(H)$ is nontrivial then it is semisimple since $D(D(H))=D(H)$ 
(cf.\ \cite{I}, Lemma 2.2).

\begin{lem}\label{L1-cpt}
	Let $G$ be a connected locally compact group and let $K$ be a compact normal subgroup of $G$. 
	Then the following hold:
	\begin{enumerate}
		\item[{$(i)$}] $Z(K)\subset Z(G)$, $Z(K^0)\subset Z(G)$ and $Z(D(K^0))\subset Z(G)$.
		\item[{$(ii)$}]  $G=Z_G(K)K= Z_G(K)K^0=Z_G^0(K)K^0=Z^0_G(K)D(K^0)$.
		\item[{$(iii)$}] $G=Z_G(K^0)K^0=Z_G^0(K^0)K^0=Z_G(D(K^0))D(K^0)$ \\
		\hphantom{$G=Z_G(K^0)K^0=Z_G^0(K^0)K^0$} $=Z^0_G(D(K^0))D(K^0)$.
		\item[{$(iv)$}]  $K=Z(K)D(K^0)$.
		\item[{$(v)$}]  $Z_G(K)=Z_G^0(K)Z(K)=Z_G^0(K^0)Z(K^0)=Z_G^0(K)Z(D(K^0))$ \\
		\hphantom{$Z_G(K)=Z_G^0(K)Z(K)=Z_G^0(K^0)Z(K^0)$} $=Z_G^0(D(K^0))Z(D(K^0))$.
		\item[{$(vi)$}]  $K/Z(K)$ is a (compact) connected semisimple group with trivial center.
		\item[{$(vii)$}] $R\subset Z_G^0(K)$, where $R$ is the radical of $G$. 
	\end{enumerate}	
\end{lem}

\begin{proof}
	$(i):$  This follows from Theorem 4 of \cite{I} since $G$ is connected and $Z(K)$, $Z(K^0)$, $Z(D(K^0))$ are compact 
	normal abelian subgroups of $G$.  
	
	\smallskip
	\noindent $(ii):$ Theorem 2 of \cite{I} shows that $G=Z_G(K)K$. Note that as $K$ is normal in $G$, so are $Z_G(K)$ and 
	$Z_G^0(K)$. As $G$ is connected, so is $G/Z_G(K)$, and hence 
	$G=Z_G(K)K^0$. Moreover, $G/K^0$ is also connected, and we get that $G=Z_G^0(K)K^0$.  
	We know from Lemma 2.4 of \cite{I} that $K^0=Z(K^0)D(K^0)$. Therefore, $G=Z^0_G(K)Z(K^0)D(K^0)=
	Z_G(K)D(K^0)=Z_G^0(K)D(K^0)$ (using (i) and the connectedness of $G$). 
	
	\smallskip
	\noindent $(iii):$ The proof follows in the same way as in (ii), using Theorem 2 of \cite{I} and the facts that $G$ is 
	connected, and $K^0$ and $D(K^0)$ are compact normal subgroups of $G$. 
	
	\smallskip
	\noindent $(iv):$ From $(ii)$, we get that $K=(K\cap Z_G(K))D(K^0)$. Since $K\cap Z_G(K)\subset Z(K)$, it follows that  
	$K=Z(K)D(K^0)$. 
	
	\smallskip
	\noindent $(v):$  This follows from (i--ii) and the following observation: 
	$$Z_G(K)\cap D(K^0)\subset Z(K)\cap D(K^0)= Z(D(K^0)).$$ 
	
	\smallskip
	\noindent $(vi):$ As $D(K^0)$ is connected and semisimple, it follows from (iv) that $K/Z(K)$ is connected and semisimple. 
	By (i), $Z(D(K^0))\subset Z(G)$ and hence we have that $Z(K)\cap D(K^0)= Z(D(K^0))$. 
	Now by (iv), $K/Z(K)$ is isomorphic to $D(K^0)/Z(D(K^0))$. Since $D(K^0)$ is semisimple, by Theorem 9.19\,(iv) of 
	\cite{HoMo1}, we get that $D(K^0)/Z(D(K^0))$ has trivial center. Therefore, the center of $K/Z(K)$ is trivial. 
	
	\smallskip
	\noindent $(vii):$ From (ii), we have that $G=Z_G^0(K)D(K^0)$, and both $Z_G^0(K)$ and $D(K^0)$ are normal in $G$ 
	and they centralise each other. As $D(K^0)$ is semisimple, its radical is trivial and it follows that 
	$R\subset Z_G^0(K)$. 
\end{proof}

\smallskip
\noindent {\bf Existence of Cartan subgroups:} 
It is well-known that Cartan subgroups exist in connected Lie groups. We now prove the existence of Cartan subgroups in 
connected locally compact groups as in Definition \ref{Cartan in l.c}. Note again that this definition agrees with
the original definition of Cartan subgroups in case of connected Lie groups. We also justify the choice of conditions in 
the definition by an example (see Example \ref{cpt-u}).

\begin{theorem}\label{existence-cartan}
	Every connected locally compact group admits a Cartan subgroup. 
\end{theorem}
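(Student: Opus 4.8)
The plan is to reduce the problem to the known existence of Cartan subgroups in connected Lie groups by exploiting the fact that $G=\varprojlim_\alpha G/K_\alpha$, where the $K_\alpha$ are compact normal subgroups with $\bigcap_\alpha K_\alpha=\{e\}$ and each $G_\alpha:=G/K_\alpha$ is a connected Lie group. For each index $\alpha$ the Lie group $G_\alpha$ admits a Cartan subgroup, and the classical fact that the image of a Cartan subgroup under a surjective morphism of connected Lie groups is again a Cartan subgroup tells me that the projection $p_{\beta\alpha}\colon G_\beta\to G_\alpha$ (for $K_\beta\subseteq K_\alpha$) carries Cartan subgroups to Cartan subgroups. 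Thus the sets $\mathrm{Cart}(G_\alpha)$ of Cartan subgroups form an inverse system under these image maps, and a single \emph{thread} $(C_\alpha)_\alpha$ in this system — a coherent family with $p_{\beta\alpha}(C_\beta)=C_\alpha$ — would produce the candidate $C:=\varprojlim_\alpha C_\alpha=\bigcap_\alpha p_\alpha^{-1}(C_\alpha)$, a closed subgroup of $G$.

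The first main obstacle is the existence of such a thread: for a merely directed inverse system, surjectivity of the bonding maps does not guarantee a nonempty limit, so the heart of the argument is to produce the coherent family $(C_\alpha)$. I would do this by a compactness argument, realising each $C_\alpha$ as a point of the Chabauty space $\mathcal S(G_\alpha)$ of closed subgroups of $G_\alpha$, which is compact, taking closures of the admissible images to obtain nonempty compact subsets with continuous bonding maps, and invoking nonemptiness of the inverse limit of nonempty compact sets; alternatively, using that a connected locally compact group is $\sigma$-compact, one reduces to a countable cofinal chain along which a thread can be built recursively from surjectivity of the bonding maps. Care is needed because a Chabauty limit of Cartan subgroups need not be Cartan, so I would keep track of the centres $Z(G_\alpha)$, which every Cartan subgroup contains, and use Lemma \ref{central-cartan} to pass between $G_\alpha$ and $G_\alpha/Z(G_\alpha)$ when necessary.

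The second, and I expect most delicate, obstacle is to check that $C=\varprojlim_\alpha C_\alpha$ is genuinely nilpotent, i.e.\ of finite class, rather than merely pro-nilpotent; this is essential since the definition requires a maximal \emph{nilpotent} subgroup, and the class of $C$ equals the supremum over $\alpha$ of the classes of the $C_\alpha$, which is a priori infinite. Here the compactness of the kernels is decisive: for each $\alpha$ the kernel $R\cap K_\alpha$ of the restriction of $p_\alpha$ to the radical $R$ has a compact connected solvable identity component, hence a torus, and by Lemma \ref{L1}$(i)$ such a normal torus is central in $G$; thus refining $\alpha$ only enlarges $C$ by central tori, which by the upper central series reduction of Lemma \ref{central-cartan}$(3)$ does not inflate the nilpotency class without bound. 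Making this finiteness-of-class statement precise is where I expect the real work to lie.

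Finally I would verify maximality and condition $\mathrm{(II)}$. For maximality, any nilpotent subgroup $M\supseteq C$ projects to a nilpotent subgroup $p_\alpha(M)\supseteq C_\alpha$ of $G_\alpha$, and maximality of each $C_\alpha$ forces $p_\alpha(M)=C_\alpha$ for all $\alpha$, whence $M\subseteq\bigcap_\alpha p_\alpha^{-1}(C_\alpha)=C$. For condition $\mathrm{(II)}$, given a closed normal $L\trianglelefteq C$ with $C/L$ compact and totally disconnected, I would push the situation down to the Lie quotients $G_\alpha$, where the corresponding Chevalley-type condition holds because $C_\alpha$ is Cartan and a compact totally disconnected Lie group is finite (cf.\ Remark \ref{Remark on Cartan}), and then assemble the conclusion that $N_G(L)/L$ is compact and totally disconnected by passing back to the limit, using that condition $\mathrm{(II)}$ is precisely the formulation that is stable under this projective-limit process.
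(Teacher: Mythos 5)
Your overall strategy---build a coherent thread $(C_\alpha)$ of Cartan subgroups of the Lie quotients $G_\alpha$ and take $C=\bigcap_\alpha \pi_\alpha^{-1}(C_\alpha)$---is the natural first idea, and the paper does prove exactly this assembly statement, but only \emph{later}, as Proposition \ref{T1}\,(2b), with machinery whose proof leans on the existence theorem's techniques. The gap in your proposal is that neither of your two methods for producing the thread works. The reduction to a countable cofinal chain via $\sigma$-compactness is false: every connected locally compact group is $\sigma$-compact, yet the system $\{K_\alpha\}$ admits a countable cofinal subfamily with trivial intersection only when $G$ is metrizable; for $G=\prod_{i\in I}\mathrm{SU}(2)$ with $I$ uncountable (precisely the kind of group behind Example \ref{ex1}) no countable cofinal chain exists, so the recursive construction never gets off the ground. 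The Chabauty alternative founders on the problem you yourself flag: the set of Cartan subgroups of a noncompact Lie group is not Chabauty-closed (conjugates of the split Cartan subgroup of $\mathrm{SL}(2,\R)$ degenerate to a subgroup of unipotent type), so the inverse limit of the closures yields a thread of closed subgroups that need not be Cartan, and ``keeping track of the centres'' cannot repair this, because the degeneration happens in the semisimple directions, not the central ones.

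Your treatment of the second obstacle (finiteness of the nilpotency class) is also incorrect as stated. You argue that refining $\alpha$ enlarges $C$ only by central tori, via Lemma \ref{L1}\,(i); but that lemma centralises only compact abelian subgroups that are \emph{normal in $G$}. When $K_\beta\subset K_\alpha$, the enlargement of $C_\beta$ over (a lift of) $C_\alpha$ is essentially a maximal torus of the compact Lie group $K_\alpha/K_\beta$, which is abelian but neither normal nor central in $G_\beta$; controlling $R\cap K_\alpha$ says nothing about these compact semisimple directions. The paper avoids both difficulties by never passing to a limit at all: it takes the \emph{single} maximal compact normal subgroup $K$ with $G/K$ Lie, quotients by $Z(K)$ (central in $G$ by Lemma \ref{L1}\,(i), harmless by Lemma \ref{central-cartan}), and obtains the direct product $G=Z_G^0(K)\times K$ with $Z_G^0(K)$ a connected Lie group and $K=\Pi_d K_d$ a product of compact simple Lie groups with trivial centre. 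A Cartan subgroup is then written down explicitly as $C_0\times\Pi_d C_d$, with $C_0$ Cartan in the Lie factor and each $C_d$ a maximal torus: nilpotency is immediate because the infinite part $\Pi_d C_d$ is abelian (this is the uniform class bound your argument was missing), and maximality and condition (II) are verified factorwise. Until you can produce the thread and the class bound by other means, your proof does not close.
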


\begin{proof}
	Let $G$ be a connected locally compact group. By Lemma 4.2 of \cite{I}, $G$ admits a unique maximal compact 
	normal subgroup $K$ such that $G/K$ is a Lie group (cf.\ \cite{Y}). By Lemma \ref{L1-cpt}\,(vi), 
	$K/Z(K)$ is a connected semisimple group with trivial center. By 
	Lemma \ref{L1-cpt}\,(i), $Z(K)$ is central in $G$, and hence by Lemma \ref{central-cartan}, we may replace $G$ by 
	$G/Z(K)$ and $K$ by $K/Z(K)$, and assume that $Z(K)$ is trivial. Now $K=K^0=D(K^0)$ is connected and has 
	trivial center, and $G=Z^0_G(K)\times K$ (cf.\ Lemma \ref{L1-cpt}\,(ii, vi)). Therefore, $Z^0_G(K)$ has no nontrivial 
	compact normal subgroups, and hence it is a connected Lie group. Let $C_0$ be a Cartan subgroup in $Z^0_G(K)$. 
	Since $K$ is a connected semisimple compact group with trivial center, by Theorem 9.19\,(iv) of \cite{HoMo1}, 
	$K=\Pi_d K_d$, where each $K_d$ is a compact connected simple Lie group with trivial center. Now 
	$G=Z^0_G(K)\times \Pi_d K_d$. Let $C_d$ be a Cartan subgroup of $K_d$ for each $d$. By Proposition 6 of \cite{Wi}, 
	each $C_d$ is compact, connected and abelian, and hence so is
	$C':=\Pi_d C_d$. Let $C=C_0\times C'=C_0\times\Pi_d C_d$, where 
	$C_0$ as above is a Cartan subgroup of the Lie group $Z_G^0(K)$. We show that $C$ satisfies conditions (I) and (II) 
	and hence it is a Cartan subgroup of $G$. As $C_0$ is nilpotent and $C'$ is abelian, it follows that $C$ is nilpotent. 
	Let $\pi_0:G\to Z^0_G(K)$ and $\pi_d:G\to K_d$ be the natural projections. Then $\pi_0$ and each $\pi_d$ are open 
	continuous group homomorphisms and $G=\pi_0(G)\times \Pi_d\pi_d(G)$. Now $C$ is a 
	maximal nilpotent group as $\pi_0(C)$ and each $\pi_d(C)$ are so in $\pi_0(G)$ and $K_d$ respectively. That is, 
	$C$ satisfies condition (I).
	
	Let $L$ be a closed normal subgroup in $C$ such that $C/L$ is compact and totally disconnected. Then 
	$C^0=C_0^0\times C'=L^0$, and it follows that $L=\pi_0(L)\times C'$. Therefore, $C/L$ is isomorphic to 
	a subgroup of $C_0/C_0^0$ which is discrete, and hence $C/L$ is finite. 
	Now we get that $C_0/\pi_0(L)$ is finite. Hence $N_{\pi_0(G)}(\pi_0(L))/\pi_0(L)$ is finite. Also, 
	$\pi_d(L)=\pi_d(C^0)=C_d$ for each $d$. Therefore, $N_{K_d}(\pi_d(L))/\pi_d(L)=N_{K_d}(C_d)/C_d$ is finite, and
	hence $N^0_K(C')=C'$ and $N^0_G(L)\cap K=C'$. Now $N_G(L)=N_{\pi_0(G)}(\pi_0(L))\times\Pi_d N_{K_d}(C_d)$, 
	and it follows that $N_G(L)K/LK$ is isomorphic to $N_{\pi_0(G)}(\pi_0(L))/\pi_0(L)$ which is finite. As $K$ is compact, 
	the preceding assertion implies that $N_G(L)/L$ is compact and also that 
	$N^0_G(L)\subset L^0(N^0_G(L)\cap K)=L^0 C'=L^0$.  Therefore, $N_G(L)/L$ is totally disconnected, 
	i.e.\ $C$ satisfies condition (II) and $C$ is a Cartan subgroup of $G$.
\end{proof}

We now give an example of a connected compact group $G$, whose Cartan subgroups $C$ are connected, but $N_G(C)/C$ is 
(compact) totally disconnected and infinite. The example justifies the choice of conditions in the definition of Cartan subgroups 
in a general connected locally compact group. Note that a maximal torus in a compact connected group is a maximal connected 
abelian subgroup.

\begin{exa} \label{cpt-u}
	Take a compact connected Lie group $K$ and a maximal torus $T$ such that $N_K(T)/T$ is nontrivial and finite. 
	Many such groups exist, for example one can take $K=U(n)$, a unitary group in $\GL(n,\C)$ and   
	$T=U(1)\times\cdots\times U(1)$ $(n$-copies$)$ for any $n\geq 2$. Then $N_K(T)/T$ is isomorphic to $S_n$\ 
	$($the permutation group of $n$ elements$)$. Recall that $T$ is a Cartan subgroup of $K$. Let $G$ be an infinite 
	countable product of $K$. It is easy to see that if $C\subset G$ is an infinite countable product of $T$, then $C$ is a 
	Cartan subgroup of $G$ 
	$($see also Proposition \ref{product}$)$. Here, $N_G(C)/C$ is an infinite compact totally disconnected group.
\end{exa}

\section{Some properties of Cartan subgroups}

In this section, we establish a relation between Cartan subgroups in a connected locally compact group and those of its 
quotient groups modulo compact normal subgroups in Proposition \ref{mod-cpt}\,(1) below; we will later generalise it for
quotient groups modulo any closed normal subgroups; see Theorem \ref{quotient}. We also show in Proposition \ref{mod-cpt}\,(2a) 
below that in a connected locally compact group, every Cartan subgroup arises as a projective limit of Cartan subgroups 
of Lie groups. Proposition \ref{mod-cpt}\,(2b) will be useful in the proof of Theorem \ref{cartan-subalgebra}. 
We also list and prove some properties related to the structure of Cartan subgroups which will be useful.

	\begin{prop}\label{mod-cpt}
	Let $G$ be a connected locally compact group. Then the following hold:
	\begin{enumerate}
		\item[{$(1)$}] Let $K$ be any compact normal subgroup of $G$. If $C$ is a Cartan subgroup of $G$, then $CK/K$ 
		is a Cartan subgroup of $G/K$. Conversely, given a Cartan subgroup $C'$ in $G/K$, there exists a Cartan 
		subgroup $C$ in $G$ such that $CK/K=C'$. 
		\item[{$(2)$}] Let $G=\varprojlim_\ap G_\ap$, a projective limit of Lie groups, where  
		$G_\alpha:=G/K_\alpha$, $\{K_\alpha\}_{\ap\in\Lambda}$ and $\pi_\alpha:G\to G_\alpha$ be  
		as in Remark \ref{Kap}. Then the following hold:
		\begin{enumerate}				
			\item[{$(a)$}] $C$ is a Cartan subgroup of $G$ if and only if $\pi_\alpha(C)$ is a Cartan subgroup of $G_\alpha$ for 
			each $\alpha\in\Lambda$. 
			\item[{$(b)$}] For every pair $\ap,\beta$ in $\Lambda$ such that $K_\ap\subset K_\beta$, 
			let $\pi_{\ap\beta}: G_\ap\to G_\beta$ be the natural quotient map. Suppose $C_\ap$ is a Cartan subgroup of 
			$G_\ap$ for each $\ap\in\Lambda$ such that $\pi_{\ap\beta}(C_\ap)=C_\beta$ for all pairs $\ap,\beta$ as above. 
			Then $C:=\cap_{\ap\in\Lambda}\,\pi_\ap^{-1}(C_\ap)$ is a Cartan subgroup of $G$ and $\pi_\ap(C)=C_\ap$ 
			for every $\ap\in\Lambda$. 
		\end{enumerate}
	\end{enumerate}
\end{prop}

\begin{proof}
	$(1):$ Let $C$ be a Cartan subgroup of $G$. By Lemma \ref{L1-cpt}\,(i, vi), $Z(K)$ is central in $G$ and the center of 
	$K/Z(K)$ is trivial. As $G/K$ is isomorphic to $(G/Z(K))/(K/Z(K))$, by Lemma \ref{central-cartan}, we may replace 
	$G$ by $G/Z(K)$ and $K$ by $K/Z(K)$, and assume that $Z(K)$ is trivial. By Lemma \ref{L1-cpt}\,(vi), we have that
	$K=D(K^0)$  is connected (with trivial center), and $G=Z^0_G(K)\times K$. It is easy to see that any Cartan 
	subgroup $C$ of $G$ is of the form $C=C_1\times C_2$, where $C_1$ is a Cartan subgroup of $Z_G^0(K)$ and 
	$C_2$ is a Cartan subgroup of $K$. Hence $CK/K$, being isomorphic to $C_1$, is a Cartan subgroup of $G/K$.
	
	Conversely, let $C'$ be a Cartan subgroup of $G/K$. By Lemma \ref{L1-cpt}, $Z(K)$ is central in $G$. By 
	Lemma \ref{central-cartan}, it is enough to prove the statement for $G/Z(K)$ and $K/Z(K)$ (instead of $G$ and $K$). 
	Hence we may replace $G$ by $G/Z(K)$ and $K$ by $K/Z(K)$, and assume 
	that $K=D(K^0)$ is connected and has trivial center. By Lemma \ref{L1-cpt}, 
	$G=Z_G^0(K)\times K$. Let $C_1:=\pi^{-1}(C')\cap Z_G^0(K)$, where $\pi:G\to G/K$ is the natural projection. As the 
	restriction of $\pi$ to $Z_G^0(K)$ is an isomorphism (homomorphism and homeomorphism) onto $G/K$ and 
	$\pi(C_1)=C'$, we get that $C_1$ is a Cartan subgroup of $Z_G^0(K)$. Let $C:=C_1\times C_2$, where 
	$C_2$ is any Cartan subgroup of $K$. It is easy to see that $C$ is a Cartan subgroup of $G$ 
	and $CK/K=\pi(C)=\pi(C_1)=C'$.
	
	\smallskip
	\noindent $(2\mbox{a}):$ Let $C$ be a Cartan subgroup of $G$. Then by $(1)$, we have $\pi_\alpha(C)$ is a Cartan subgroup of 
	$\pi_\alpha(G)$ for each $\ap\in\Lambda$.
	
	Conversely, let $C$ be a subgroup of $G$ such that $\pi_\alpha(C)$ is a Cartan subgroup of $\pi_\alpha(G)=G_\alpha$ 
	for each $\alpha\in\Lambda$. As $\pi_\alpha(C)$ is a maximal nilpotent group in $G_\alpha$, it contains $Z(G_\ap)$, the 
	center of $G_\alpha$. As $\pi_\ap(Z(G))\subset Z(G_\ap)$ for each $\alpha$, we get that $Z(G)\subset C$. 
	
	Let $K$ be the maximal compact normal subgroup of $G$. By Lemma \ref{L1-cpt}\,(i), we have that $Z(K)\subset Z(G)$ and by 
	Lemma \ref{central-cartan}, it is enough to prove that $C/Z(K)$ is a Cartan subgroup of $G/Z(K)$. By Lemma \ref{L1-cpt}\,(vi),  
	$K/Z(K)$ has trivial center, and hence $G/Z(K)$ has no nontrivial compact central subgroup. We also get from (1) that for 
	each $\ap$, $(CK_\ap Z(K))/(K_\ap Z(K)$) is a 
	Cartan subgroup of $G/(K_\ap Z(K))$. Now we may replace $G$ by $G/Z(K)$ and $K_\ap$ by $(K_\ap Z(K))/Z(K)$ 
	and assume that the maximal compact normal subgroup of $G$ (which we denote by $K$ again) has trivial center. 
	By Lemma \ref{L1-cpt}\,(ii, iv), $K=D(K^0)$ is connected and has trivial center, and $G=Z_G^0(K)\times K$.
	As $K_\ap\subset K$ and $K$ is connected, by Lemma \ref{L1-cpt}\,(i) we get that  
	$Z(K_\ap)\subset Z(K)=\{e\}$ for each $\alpha$. Therefore, $K_\ap$ has trivial center and 
	$K_\ap$ are connected (cf.\ Lemma \ref{L1-cpt}\,(vi)). Now $K=Z^0_K(K_\ap)\times K_\ap$ and we get that 
	$$
	G=Z_G^0(K)\times K=Z_G^0(K)\times K'_\ap\times K_\ap,
	$$ 
	where $K'_\ap:=Z^0_K(K_\ap)$ is a connected semisimple Lie group with trivial center for each $\ap$. 
	Therefore, $\pi_\ap(G)$ is isomorphic to the direct product $Z_G^0(K)\times K'_\ap$, and hence 
	$\pi_\ap(C)=C_\ap$ is isomorphic to $C'\times C'_\ap$, where $C'$ (resp.\ $C'_\ap$) is a Cartan subgroup of $Z_G^0(K)$ 
	(resp.\ $K'_\ap$). As $K'_\ap$ is semisimple and has trivial center, as shown in the proof of Theorem \ref{existence-cartan}, 
	$C'_\ap$ is abelian. Therefore, we have that the length of the central series of each $C_\ap$ is same as that of $C'$. 
	This implies that $C$ is nilpotent. It is easy to see that $C$ is a maximal nilpotent group in $G$, since each $\pi_\ap(C)$ is 
	so in $G_\ap$. Suppose $L\subset C$ is a closed 
	normal subgroup of $C$ such that $C/L$ is compact and totally disconnected. Then for each $\ap$, 
	$\pi_\ap(C)/\pi_\ap(L)$ is isomorphic to $C/L(C\cap K_\ap)$, which is a quotient of $C/L$. 
	Therefore $\pi_\ap(C)/\pi_\ap(L)$ is compact and totally disconnected, and being a Lie group, it is finite. Hence, 
	$N_{G_\ap}(\pi_\ap(L))/\pi_\ap(L)$ is finite. Since $\pi_\ap(N_G(L))\subset N_{G_\ap}(\pi_\ap(L))$, we get that 
	$\pi_\ap(N_G(L))/\pi_\ap(L)$ is finite for each $\ap$. 
	Therefore, $N_G(L)/L$ is compact and $L^0\subset N_G^0(L)\subset L^0K_\ap$ for all $\ap$. Hence $N_G^0(L)=L^0$ 
	and $N_G(L)/L$ is totally disconnected. Thus, $C$ is a Cartan subgroup of $G$. 
	
	\smallskip
	\noindent {(2\mbox{b}):} Let $\{C_\ap\}_{\ap\in\Lambda}$ and $C=\cap_{\ap\in\Lambda}\,\pi_\ap^{-1}(C_\ap)$ be as in the hypothesis. 
	As each $C_\ap$ is a Cartan subgroup of $G_\ap$, to show that $C$ is a Cartan subgroup of $G$, by (2a), 
	it is enough to show that $\pi_\ap(C)=C_\ap$ for each $\ap\in\Lambda$. Let $\ap\in\Lambda$. By (1), there exists a Cartan 
	subgroup $C'_\ap$ in $G$ such that $\pi_\ap(C'_\ap) = C_\ap$. By Lemma \ref{L1-cpt}, we have that $G=Z_\ap K^0_\ap$, with 
	$Z_\ap=Z^0_G(K_\ap)$. Here $Z_\ap$ and $K^0_\ap$ are normal in $G$ and they centralise each other, and 
	$Z_\ap\cap K^0_\ap\subset Z(K^0_\ap)\subset Z(G)$ (cf.\ Lemma \ref{L1-cpt}\,(i)). Hence we get that $C'_\ap=C_{\ap 1}C_{\ap 2}$, where 
	$C_{\ap 1}=C'_\ap\cap Z_\ap$ (resp.\ $C_{\ap 2}=C'_\ap\cap K^0_\ap$) is a Cartan subgroup of $Z_\ap$ (resp.\ $K^0_\ap$).  
	Since $C'_\ap K_\ap=C_{\ap 1}K_\ap$ and since $C_{\ap 1}\cap K_\ap\subset Z_\ap\cap K_\ap\subset Z(K_\ap)\subset Z(G)$,  
	the following holds: For any Cartan subgroup $L$ of $G$ such that $\pi_\ap(L)=C_\ap$, we 
	get that $L\cap Z_\ap=C_{\ap 1}$. 
	
	We put a natural order on the index set $\Lambda$ as follows: For $\ap,\beta\in\Lambda$, $\ap\leq \beta$ if 
	$K_\ap\subset K_\beta$. Let $\ap,\beta\in\Lambda$ be such that $\ap\leq\beta$. Then $\pi_{\ap\beta}\circ\pi_\ap=\pi_\beta$ 
	and $\pi_\beta(C'_\ap)=\pi_{\ap\beta}\pi_\ap(C'_\ap)=\pi_{\ap\beta}(C_\ap)=C_\beta$. Hence we get that 
	$C'_\ap\cap Z_\beta=C_{\beta 1}$, i.e.\ $C_{\beta 1}\subset C'_\ap$. 
	Moreover, $C'_\ap K_\ap\subset C'_\ap K_\beta= C'_\beta K_\beta$,
	i.e.\ $\pi_\ap^{-1}(C_\ap)\subset\pi_\beta^{-1}(C_\beta)$. This implies that $C=\cap_{\ap\leq\beta}\, \pi_\ap^{-1}(C_\ap)$ 
	for each $\beta\in \Lambda$. 
	Now as $C_{\beta1}\subset C'_\ap$ for each $\ap\leq\beta$, we get that $C_{\beta 1}\subset C$. Therefore, 
	$C'_\beta K_\beta=C_{\beta 1}K_\beta\subset CK_\beta$. Conversely, $CK_\beta\subset\pi_\beta^{-1}(C_\beta)=C'_\beta K_\beta$. 
	Therefore, $\pi_\beta(C)=C_\beta$ for each $\beta\in\Lambda$. Hence, $C$ is a Cartan subgroup of $G$ by (2a). 
\end{proof}

In a connected locally compact group $G$, there exists a largest (closed) connected nilpotent normal subgroup which is known 
as the nilradical of $G$; this is well-known, it can also be deduced using the facts that it is true in case of a connected Lie group, the maximal 
compact normal subgroup $K$ of the radical $R$ in $G$ is abelian (cf.\ \cite{I}) and central in $R$ (cf.\ Lemma \ref{L1-cpt}\,(vii)), and $R/K$, being 
a Lie group,  admits a nilradical, and hence $R$ admits a nilradical, which is also the nilradical of $G$. Note that the maximal compact (central) 
subgroup $T$ of the nilradical $N$ of $G$ is connected and central in $G$ and $N/T$ is simply connected and nilpotent (cf.\ \cite{Gl}, Lemma 6.8). 
Hence, as in Proposition 3.1  of \cite{MS}, one can have a recipe for constructing a Cartan subgroup containing a nilpotent subgroup which 
complements the nilradical in some sense in a connected  locally compact solvable group. However, one can easily deduce the following 
directly by using the same result on Lie groups in \cite{MS} and Lemma \ref{central-cartan}. 

\begin{cor} \label{solv} Let $G$ be a connected locally compact solvable group and let $N$ be the nilradical of $G$. 
	If $L$ is a (closed) nilpotent subgroup of $G$ such that $G=LN$, then there exists a Cartan subgroup $C$ of $G$ 
	such that $L\subset C$. 
\end{cor}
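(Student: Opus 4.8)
The plan is to reduce to the known Lie group case through the projective limit description $G=\varprojlim_\alpha G/K_\alpha$ and then assemble the Lie group Cartan subgroups into a single Cartan subgroup of $G$ by means of Proposition \ref{T1}. Fix such a presentation, with $K_\alpha$ compact normal in $G$, $\cap_\alpha K_\alpha=\{e\}$, and each $G_\alpha:=G/K_\alpha$ a (necessarily connected solvable) Lie group; write $\pi_\alpha\colon G\to G_\alpha$ for the projections and, whenever $K_\alpha\subset K_\beta$, $\pi_{\alpha\beta}\colon G_\alpha\to G_\beta$ for the transition maps.

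First I would check that the hypotheses descend to each quotient. Since $L$ is closed and $K_\alpha$ is compact, $LK_\alpha$ is closed, so $\pi_\alpha(L)$ is a closed subgroup of $G_\alpha$; being a continuous homomorphic image of a nilpotent group, it is nilpotent. The nilradical $N$ of $G$ is closed, connected, nilpotent and normal, so $\pi_\alpha(N)$ is a connected nilpotent normal subgroup of $G_\alpha$ and therefore lies in the nilradical $N_\alpha$ of $G_\alpha$. Consequently $G_\alpha=\pi_\alpha(L)\pi_\alpha(N)\subset\pi_\alpha(L)N_\alpha$, whence $G_\alpha=\pi_\alpha(L)N_\alpha$. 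Thus, for each $\alpha$, the connected solvable Lie group $G_\alpha$, its nilradical $N_\alpha$ and the closed nilpotent subgroup $\pi_\alpha(L)$ satisfy the hypotheses of the Lie group result of \cite{MS}, which produces a Cartan subgroup $C_\alpha$ of $G_\alpha$ with $\pi_\alpha(L)\subset C_\alpha$.

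The main obstacle is coherence: Proposition \ref{T1}(2b) requires $\pi_{\alpha\beta}(C_\alpha)=C_\beta$ whenever $K_\alpha\subset K_\beta$, while an arbitrary choice of the $C_\alpha$ need not be coherent. I would resolve this by invoking the uniqueness of the Cartan subgroup containing $L$ in the solvable Lie setting. In a connected solvable Lie group the Cartan subgroups are connected (and conjugate, cf.\ Proposition \ref{abelian-conj}), and the one containing a prescribed nilpotent subgroup $\pi_\alpha(L)$ with $\pi_\alpha(L)N_\alpha=G_\alpha$ is unique: at the Lie algebra level any Cartan subalgebra containing $\mathrm{Lie}(\pi_\alpha(L))$ is contained in the Fitting null component $\mathfrak{g}_0(\mathrm{Lie}(\pi_\alpha(L)))$ of $\mathrm{ad}(\mathrm{Lie}(\pi_\alpha(L)))$, and, all Cartan subalgebras having the same dimension, must coincide with it; this component is canonically determined by $\pi_\alpha(L)$, and the Cartan subgroup, being connected, is determined by its Lie algebra. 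Granting this, $C_\alpha$ is uniquely determined by $\pi_\alpha(L)$. Now, for $K_\alpha\subset K_\beta$, the group $K_\beta/K_\alpha$ is compact normal in $G_\alpha$, so by Proposition \ref{T1}(1) the image $\pi_{\alpha\beta}(C_\alpha)$ is a Cartan subgroup of $G_\beta$; it contains $\pi_{\alpha\beta}(\pi_\alpha(L))=\pi_\beta(L)$, hence by uniqueness $\pi_{\alpha\beta}(C_\alpha)=C_\beta$, establishing the required coherence.

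Finally I would apply Proposition \ref{T1}(2b) to the coherent family $\{C_\alpha\}$: the subgroup $C:=\cap_\alpha\pi_\alpha^{-1}(C_\alpha)$ is a Cartan subgroup of $G$ with $\pi_\alpha(C)=C_\alpha$ for every $\alpha$. Since $\pi_\alpha(L)\subset C_\alpha$ gives $L\subset\pi_\alpha^{-1}(C_\alpha)$ for each $\alpha$, it follows that $L\subset\cap_\alpha\pi_\alpha^{-1}(C_\alpha)=C$, as required. I expect the only genuinely delicate point to be this uniqueness (equivalently, the functoriality of the Lie group construction under the maps $\pi_{\alpha\beta}$); should one prefer to avoid it, one could instead verify that the explicit recipe of \cite{MS} commutes with the quotient homomorphisms $\pi_{\alpha\beta}$, which supplies the same coherence directly.
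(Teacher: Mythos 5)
Your overall strategy (apply the solvable Lie group result of \cite{MS} on each quotient $G_\alpha=G/K_\alpha$ and glue via Proposition \ref{T1}(2b)) can be made to work, but the step on which everything hinges --- uniqueness of the Cartan subgroup of $G_\alpha$ containing $\pi_\alpha(L)$ --- is not actually proved by the argument you give, and as stated that argument is a non-sequitur. You argue: every Cartan subalgebra containing $\mathfrak{l}_\alpha=\mathrm{Lie}(\pi_\alpha(L))$ lies in the Fitting null component $\mathfrak{g}_0(\mathfrak{l}_\alpha)$, and ``all Cartan subalgebras having the same dimension, must coincide with it.'' Equal dimension of two subalgebras sitting inside $\mathfrak{g}_0(\mathfrak{l}_\alpha)$ does not force them to be equal unless you also know that $\mathfrak{g}_0(\mathfrak{l}_\alpha)$ \emph{is itself} a Cartan subalgebra (equivalently, that it is nilpotent, or that $\mathfrak{l}_\alpha$ contains a regular element). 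Note that your sketch nowhere uses the hypothesis $G_\alpha=\pi_\alpha(L)N_\alpha$; an argument for uniqueness that does not use it cannot be correct, since without it uniqueness is plainly false (take $L$ trivial: every Cartan subalgebra contains $\mathrm{Lie}(L)=0$). The missing content is precisely the implication: if $\mathfrak{g}_\alpha=\mathfrak{l}_\alpha+\mathfrak{n}_\alpha$ with $\mathfrak{l}_\alpha$ nilpotent and $\mathfrak{n}_\alpha$ the nilradical, then $\mathfrak{g}_0(\mathfrak{l}_\alpha)$ is nilpotent and self-normalising, hence a Cartan subalgebra; this can be proved (complexify, use the generalized weight decomposition under $\mathfrak{l}_\alpha$, note that nonzero weights live in $\mathfrak{n}_{\alpha,\mathbb{C}}$ because $[\mathfrak{g}_\alpha,\mathfrak{g}_\alpha]\subset\mathfrak{n}_\alpha$, and apply Lie's theorem to see all weights of $\mathfrak{g}_0(\mathfrak{l}_\alpha)$ on $\mathfrak{n}^0_\alpha$ vanish), after which uniqueness follows since a self-normalising subalgebra of a nilpotent Lie algebra is the whole algebra. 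So your proof has a real gap, though a fillable one, and your fallback suggestion (check that the construction in \cite{MS} commutes with the maps $\pi_{\alpha\beta}$) would indeed be the honest way to complete this route.

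It is worth comparing with the paper's proof, which avoids the coherence problem entirely and is much shorter: since $G$ is solvable and Lie projective, its maximal compact normal subgroup $K$ (the one with $G/K$ a Lie group) is \emph{central} --- $K^0$ is compact connected solvable, hence abelian, so $K=Z(K)\subset Z(G)$ by Lemma \ref{L1} --- and therefore one needs only the single quotient $\pi:G\to G/K$. One applies Proposition 3.1 of \cite{MS} once to get a Cartan subgroup $C'$ of the Lie group $G/K$ containing $\pi(L)$, and then $C:=\pi^{-1}(C')\supset L$ is a Cartan subgroup of $G$ directly by Lemma \ref{central-cartan}(2), since the kernel is central. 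No uniqueness statement, no compatible system $\{C_\alpha\}$, and no appeal to Proposition \ref{T1}(2b) is needed; the special feature of solvability (centrality of $K$) is what your projective-limit approach fails to exploit.
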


\begin{proof}
	Let $K$ be the maximal compact normal subgroup of $G$. Since $G$ is solvable and Lie projective, it is easy to see that 
	$K$ is abelian (cf.\ \cite{I}). By Lemma \ref{L1-cpt}\,(i), $K$ is central in $G$, and hence, it is contained in every Cartan subgroup of $G$. 
	Let $\pi:G\to G/K$ be the natural projection. Then $G/K$ is a Lie group with $\pi(N)$ as its nilradical, and $G/K=\pi(L)\pi(N)$, where 
	$\pi(L)$ is nilpotent. Now by Proposition 3.1 of \cite{MS}, $\pi(L)$ is contained in a Cartan subgroup (say) $C'$ of $G/K$. Therefore,
	$L\subset LK\subset \pi^{-1}(C'):=C$ which is a Cartan subgroup of $G$ by Lemma \ref{central-cartan}\,(2).
\end{proof}

The following example show that in Corollary \ref{solv}, the inclusion \( L \subset C \) can be strict. It also shows that the
Cartan subgroups need not be abelian.

\begin{exa} \label{non-ab1}
	Let $G=\R_+^*\ltimes N$, where $N$ is a simply connected nilpotent group consisting of $5\times 5$ real strictly 
	upper triangular matrices, and the $\R_+^*$-acton on $N$ is as follows: For $t\in R_+^*$ and $a\in N$, where
	$$a= \begin{bmatrix}
		1 & a_{12} & a_{13} & a_{14} &a_{15}\\
		0 & 1 & a_{23}&a_{24}&a_{25} \\
		0 & 0 & 1&a_{34}&a_{35}\\
		0&0&0&1&a_{45}\\
		0&0&0&0&1
	\end{bmatrix},$$ 
	we set	
	$$t\cdot a = \begin{bmatrix}
		1 & ta_{12} & a_{13} & ta_{14} &a_{15}\\
		0 & 1 & t^{-1}a_{23}&a_{24}&t^{-1}a_{25} \\
		0 & 0 & 1&ta_{34}&a_{35}\\
		0&0&0&1&t^{-1}a_{45}\\
		0&0&0&0&1
	\end{bmatrix}.$$
	Clearly, $$Z_N(\mathbb R_+^*)=\begin{bmatrix}
		1 & 0 & a_{13} & 0 &a_{15}\\
		0 & 1 & 0&a_{24}& 0 \\
		0 & 0 & 1& 0&a_{35}\\
		0&0&0&1& 0\\
		0&0&0&0&1
	\end{bmatrix}.$$ 
	For $L:=\R_+^*$, $G:=LN$, we have that $C := LZ_N(L)$ is a Cartan subgroup of $G$ and it contains $L$ strictly. Moreover, $C$ is not abelian. 
	
	One can take $N$ to be a group of $\,n\times n$ unipotent real matrices, for any $n=2k+3$, $k\in\N$, and construct higher dimensional 
	connected solvable Lie groups of the form $LN$ with non-abelian Cartan subgroups.
\end{exa}

Using Example \ref{non-ab1}, one can also construct connected locally compact solvable groups, which are not Lie groups, with non-abelian 
Cartan subgroups; see Example \ref{non-ab2}. However, Cartan subgroups of a compact connected groups are abelian. 
The following is known for connected Lie groups.

\begin{prop}\label{abelian-conj}
	Let $G$ be a connected locally compact group such that $G/R$ is compact, where $R$ is the radical of $G$.  
	Then Cartan subgroups of $G$ are connected and conjugate to each other. Moreover, if $G$ is compact, then its Cartan 
	subgroups are precisely the maximal connected abelian subgroups.
\end{prop}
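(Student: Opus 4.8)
The plan is to reduce everything to the two settings we already control, finite-dimensional Lie groups and compact connected groups, using the structural decomposition of Lemma \ref{L1} together with Proposition \ref{T1}. First I would settle connectedness by passing to the Lie quotients. Write $G=\varprojlim_\alpha G/K_\alpha$ with $G_\alpha=G/K_\alpha$ Lie and $\{K_\alpha\}$ a neighbourhood basis of $e$ consisting of compact normal subgroups. Each $G_\alpha/R_{G_\alpha}$ is a quotient of $G/R$, hence compact, so the known Lie-group case of the proposition applies to every $G_\alpha$; by Proposition \ref{T1}(2a) each $\pi_\alpha(C)$ is a Cartan subgroup of $G_\alpha$ and therefore connected. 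The engine is the auxiliary fact that if $M$ is a compact normal subgroup of a locally compact group $C$ with $C/M$ connected, then $C=C^0M$: indeed $C/C^0M$ is at once a quotient of the connected group $C/M$ and a quotient of the totally disconnected group $C/C^0$, the latter remaining totally disconnected since a quotient of a totally disconnected locally compact group by a closed normal subgroup is totally disconnected (van Dantzig), so $C/C^0M$ is trivial. Applying this with $M=C\cap K_\alpha$ gives $\pi_\alpha(C^0)=\pi_\alpha(C)$, i.e.\ $C^0K_\alpha=CK_\alpha$ for all $\alpha$; intersecting and using $\cap_\alpha C^0K_\alpha=\ol{C^0}=C^0$ yields $C=C^0$.

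For conjugacy I would instead reduce through the centre so as to land in an honest direct product. Let $K$ be the maximal compact normal subgroup with $G/K$ Lie. By Lemma \ref{L1}(i,vi), $Z(K)$ is central in $G$ and contained in every Cartan subgroup, and $G/R$ compact passes to $G/Z(K)$ via $(G/Z(K))/(R/Z(K))\cong G/R$; so by Lemma \ref{central-cartan} I may assume $Z(K)=\{e\}$. Then $K=D(K^0)$ is compact connected semisimple with trivial centre, $G=Z_G^0(K)\times K$, and $H:=Z_G^0(K)$ is a finite-dimensional connected Lie group (no nontrivial compact normal subgroup, as in the proof of Theorem \ref{existence-cartan}) with $H/R_H$ compact. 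As observed in the proof of Proposition \ref{T1}(1), every Cartan subgroup of such a product is $C=C_1\times C_2$ with $C_1$ Cartan in $H$ and $C_2$ Cartan in $K$. The $C_1$ are conjugate in $H$ by the Lie-group case, while $K=\prod_d K_d$ is a product of compact simple Lie groups whose Cartan subgroups $C_2=\prod_d C_d$ (products of maximal tori) are conjugate coordinatewise; hence the $C$ are all conjugate in $G$. Conjugacy then lifts back along $G\to G/Z(K)$: if $\ol g$ conjugates $\ol C$ to $\ol{C'}$, any lift $g$ gives $gCg^{-1}Z(K)=C'Z(K)$, and since $Z(K)\subset Z(G)$ lies in both $gCg^{-1}$ and $C'$, this forces $gCg^{-1}=C'$.

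The main obstacle is precisely this conjugacy step. A naive inverse-limit threading of conjugators $g_\alpha\in G_\alpha$ fails because the normalisers $N_{G_\alpha}(\pi_\alpha(C))$, and hence the sets of admissible conjugators, need not be compact, so the inverse system of these sets may have empty limit. Routing conjugacy through the central quotient to the genuine direct product $H\times K$ is what circumvents this, since there the conjugation is performed separately inside the single Lie group $H$ and inside the compact group $K$, each a setting where conjugacy is already available.

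For the last assertion, suppose $G$ is compact. By the first part each Cartan subgroup $C$ is connected, and being a closed, hence compact, connected nilpotent group it is abelian; since $C$ is maximal nilpotent it is a maximal connected abelian subgroup. Conversely, the maximal connected abelian subgroups of a compact connected group are exactly its maximal tori, which form a single conjugacy class; as the (connected abelian) Cartan subgroups sit among them and are themselves a single conjugacy class, every maximal connected abelian subgroup is conjugate to a Cartan subgroup, and being a conjugate of a Cartan subgroup it is itself a Cartan subgroup. This identifies the Cartan subgroups with the maximal connected abelian subgroups.
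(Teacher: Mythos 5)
Your proof is correct, and for the two main assertions it follows essentially the paper's own route: connectedness is obtained by projecting to the Lie quotients $G_\alpha$ via Proposition \ref{T1} and quoting the Lie-group case, and conjugacy is obtained by passing to $G/Z(K)$ and splitting the resulting direct product $Z^0_G(K)\times K$ into a Lie factor and a product of compact simple Lie groups with trivial centre. Two of your additions are details the paper leaves implicit and are worth having: the argument that $C=C^0(C\cap K_\alpha)$ for every $\alpha$ forces $C=C^0$ (the paper simply writes $C=\varprojlim_\alpha \pi_\alpha(C)$ and concludes connectedness), and the observation that conjugacy lifts along the central quotient because $Z(K)\subset Z(G)$ lies in every Cartan subgroup, so $gCg^{-1}Z(K)=C'Z(K)$ already gives $gCg^{-1}=C'$. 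The one place where you genuinely diverge is the converse in the compact case: the paper shows that for a maximal connected abelian subgroup $A$ each $\pi_\alpha(A)$ is again maximal connected abelian, hence a Cartan subgroup of the compact Lie group $G_\alpha$, and then applies Proposition \ref{T1}\,(2a); you instead invoke the conjugacy of maximal pro-tori in compact connected groups (Theorem 9.32 of \cite{HoMo1}) together with the already-established conjugacy of Cartan subgroups and the fact that conjugates of Cartan subgroups are Cartan. Your route imports a nontrivial external theorem, but it buys you something: you avoid the step the paper dismisses as ``easy to see,'' namely that maximality of $A$ survives the projections $\pi_\alpha$, which actually requires an argument. The paper's route, by contrast, stays entirely inside its own projective-limit machinery. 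Both arguments are sound.
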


\begin{proof} 
	As $G$ is Lie projective, $G=\varprojlim_\ap G_\alpha$ for Lie groups  $G_\ap=G/K_\ap$, where compact groups $K_\ap$ and 
	$\pi_\ap: G\to G_\ap$ are as in Remark \ref{Kap}. Let $C$ be a Cartan subgroup of $G$.  By Proposition \ref{mod-cpt}, $\pi_\alpha(C)$ 
	is a Cartan subgroup of $G_\ap$ for each $\ap$. Note that  $C=\varprojlim_\ap \pi_\alpha(C)$. 
	Now $\pi_\ap(R)$ is the radical of $G_\alpha$ and the condition on $G$ implies that $G_\ap/\pi_\ap(R)$ is compact,
	and hence $\pi_\alpha(C)$ is connected for each $\ap$ (cf.\ \cite{Wi}, Proposition 6). Hence $C$ is connected. 
	
	Now we prove that all Cartan subgroups of $G$ are conjugate to each other. This holds in case 
	$G$ is a Lie group (cf.\ \cite{Wi}, Proposition 6). Now suppose $G$ is not a Lie group. 
	Note that all the conjugates of $C$ are also Cartan subgroups of $G$. Let $C'$ be any Cartan subgroup of $G$. We will 
	show that it is conjugate to $C$. Let $K$ be the largest compact normal subgroup of $G$. Then $K$ is nontrivial and 
	$K=Z(K)D(K^0)$ (see Lemma \ref{L1-cpt}(iv)). Since $Z(K)\subset Z(G)$ is contained in all Cartan subgroups, by 
	Lemma \ref{central-cartan}, we can replace $G$ by $G/Z(K)$, and $K$ by $K/Z(K)$, and assume that 
	$G=Z^0_G(K)\times K$, $K=D(K^0)=\Pi_d(R_d)$, where $Z^0_G(K)$ is a connected Lie group and each $R_d$ is a compact 
	simple Lie group with a trivial center.  Now $C=C_1\times C_2$ and $C'=C'_1\times C'_2$ where $C_1$ and $C'_1$ 
	(resp.\ $C_2$ and $C'_2$) are Cartan subgroups of $Z^0_G(K)$ (resp.\ $K$).  As the Cartan subgroups of $K$ are product 
	of Cartan subgroups of connected compact Lie groups $R_d$, we have that $C_2$ and $C'_2$ are conjugate. Now the 
	radical $R$ of $G$ is contained in $Z^0_G(K)$ (cf.\ Lemma \ref{L1-cpt}\,(vii)), it is also the radical of $Z^0_G(K)$, and $Z^0_G(K)/R$ 
	is compact. Hence we get that $C_1$ and $C'_1$ are also conjugate. This implies that $C$ and $C'$ are conjugate. 
	
	Now suppose $G$ is compact. Then so is $G_\ap$, and $\pi_\ap(C)$ is a maximal connected abelian subgroup of $\pi_\ap(G)$ for 
	each $\ap$, and hence $C$ is a maximal (connected) abelian subgroup of $G$. Conversely, it is easy to see that if $A$ is a 
	maximal connected abelian subgroup in $G$, then so is $\pi_\ap(A)$ in $\pi_\ap(G)$, and it is a Cartan 
	subgroup of $\pi_\ap(G)$ for each $\ap$. By Proposition \ref{mod-cpt}\,(2a), $A$ is a Cartan subgroup of $G$. 		
\end{proof}

For the connected Lie group $G=\SO(2,\R)\ltimes\R^2$, where $\SO(2,\R)$ is the special orthogonal group in $\GL(2,\R)$, the Cartan subgroups 
of $G$ are conjugates of $\SO(2,\R)$. For the connected Lie group $\R_+^*\ltimes\R$, all conjugates of $\R_+^*$ are its Cartan subgroups. 
In general, Cartan subgroups need not be connected. Note that any conjugate of a Cartan subgroup is also a Cartan subgroup in any 
connected locally compact group. However, they need not be connected or conjugate to each other. For example, Cartan subgroups of 
$\SL(2,\R)$ are either conjugates of $\SO(2,\R)$, which is connected and compact, or they are conjugates of the group $A$ of all diagonal 
matrices in $\SL(2,\R)$ and $A$ is non-compact and has two connected components. Now we give examples of Cartan subgroups of 
connected solvable and non-solvable locally compact groups which are not Lie groups. 

\begin{exa} \label{non-Lie} Consider the $3$-dimensional Heisenberg group $\Hm$; it is a unipotent group consisting of $3\times 3$ strictly upper 
	triangular real matrices. There is a continuous group action of $\GL(2,\R)$ on $\Hm$ by automorphisms and $\SL(2,\R)$ acts trivially on the 
	center $Z$ of $\Hm$. Let $\psi$ be an automorphism of $\SL(2,\R)\ltimes\Hm$ which acts trivially on $\SL(2,\R)$ and its action on $\Hm$ is 
	given by $2I\in\GL(2,\R)$, i.e.\ for $(a,b,c)\in\Hm$, where $c$ is the central element in $\Hm$, $\psi(a,b,c)=(2a,2b,4c)$. Note that the center 
	$Z$ of $\Hm$ is isomorphic to $\R$ and for a discrete infinite subgroup $D$ of $Z$, which is isomorphic to $\Z$, the action of $\SL(2,\R)$ 
	on $\Hm$ factors through $D$ and it acts on $\Hm/D$ by automorphisms, and it acts trivially on $Z/D$. Let $H=\SL(2,\R)\ltimes\Hm/D$ 
	(which is isomorphic to $(\SL(2,\R)\ltimes\Hm)/D$). Let $\psi_1$ be the homomorphism of $H$ induced by $\psi$, i.e.\ 
	$\psi_1$ acts trivially on $\SL(2,\R)$ and $\psi_1(xD)=\psi(x)D$, $x\in \Hm/D$. Note that $\psi_1$ is well-defined since $\psi(D)\subset D$. 
	Moreover, $\psi_1$ is continuous and surjective,  and $\ker\psi_1$ is a finite subgroup of order $4$ in $Z/D$, 
	the center of $H$, which is compact. The quotient map induced by $\psi_1$ on $H/(Z/D)=\SL(2,\R)\ltimes\R^2$ is the 
	automorphism induced by $\psi$. There is a closed subgroup $G$ 
	of $H^\Z$ defined as follows: $G:=\{(x_n)_{n\in\Z}\in H^\Z\mid x_{n+1}=\psi_1(x_n), n\in\Z\}$. 
	Then the center $K$ of $G$ is a compact subgroup of $(Z/D)^\Z$ (which is a compact 
	subgroup of $H^\Z$); $K$ is in fact a solenoid. Moreover, $G/K$ is isomorphic to $\SL(2,\R)\ltimes \R^2$, a Lie group, and hence 
	$G/K$ is locally compact. As $K$ is compact, it follows that $G$ is locally compact (cf.\ \cite{HRo}, Theorem 5.25).  Thus, $G$ is a 
	connected locally compact group which is not a Lie group. In fact, $G$ is isomorphic to $\SL(2,\R)\ltimes((\Hm/D)^\Z\cap G)$. As 
	Cartan subgroups of $G/K=\SL(2,\R)\ltimes \R^2$ are conjugates of either the 
	diagonal subgroup $A$ of $\,\SL(2,\R)$ or $\SO(2,\R)$ in $G/K$, by Lemma \ref{central-cartan}, Cartan subgroup of $G$ are conjugates of either 
	$A\times K$ or $\SO(2,\R)\times K$ in $G$. We can also take the closed subgroup $B:=\SO(2,\R)\ltimes((\Hm/D)^\Z\cap G)$ of $G$, then 
	$B$ is connected, locally compact and solvable, but not a Lie group, and all the Cartan subgroups of $B$ are conjugates of 
	$\SO(2,\R)\times K$ (Proposition \ref{abelian-conj}. Here, all the Cartan subgroups of $B$ (resp.\ $G$) are abelian.
\end{exa} 

\begin{exa} \label{non-ab2} There are connected locally compact solvable groups $G$ with the properties that $G$ is not a Lie group, the 
	center of $G$ is a solenoid and the Cartan subgroups of $G$ are non-abelian. Such a group $G$ can be constructed using the connected 
	solvable Lie group $LN$ as in Example \ref{non-ab1}. Let $Z$ be the center of $LN$ which is isomorphic to $\R$, $D$ be a discrete infinite 
	subgroup of $Z$ and let $H=(LN)/D$. It is easy to see that $LN$ admits automorphisms $\psi$ which act trivially on $L$ and the 
	$\psi$-action on $N$ is such that $\psi(z)=k^4z$, $z\in Z$, for any $k\in\N\mi\{1\}$. Using a similar construction as in Example \ref{non-Lie} 
	involving $\psi$, one can get a connected locally compact solvable group $G\subset H^\Z$ with the properties mentioned above such 
	that it has a non-abelian Cartan subgroup. Moreover, by Proposition \ref{abelian-conj}, the Cartan subgroups of $G$ are conjugate to each other, 
	hence they are all non-abelian. 
\end{exa}

Now we list some properties of Cartan subgroups of a connected locally compact group $G$. It is well-known that any closed solvable 
subgroup of $G$ is compactly generated (see Main Theorem of \cite{HN}), and hence all Cartan subgroups 
of $G$ are compactly generated. 
The following is known for Lie groups (see \cite{MS}, Corollary 3.7, Lemma 3.9 and Remark 3.10). 

\begin{lem} \label{properties} Let $G$ be a connected locally compact group and let $C$ be a Cartan subgroup of $G$. 
	Let $R$ and $N$ be respectively the radical and the nilradical of $G$. Then the following hold:
	\begin{enumerate}
		\item[{$(1)$}]  $N_G(C^0)=N_G(C)$, $N_R(C^0)=N_R(C)=C\cap R$ and $N_N(C^0)=N_N(C)=C\cap N$. 
		\item[{$(2)$}]  $Z_G(C^0)\subset C$. Moreover, if $G$ is semisimple, then $Z_G(C^0)=C$. 
		\item[{$(3)$}]  If $H$ is a closed connected subgroup of $G$ such that $C\subset H$, then $C$ is a Cartan subgroup of $H$. 
	\end{enumerate}
\end{lem}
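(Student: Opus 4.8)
The plan is to reduce every assertion to the Lie group case by means of the projective limit presentation of Proposition~\ref{T1}, since all three statements are already available for connected Lie groups. Write $G=\varprojlim_\alpha G_\alpha$ with $G_\alpha=G/K_\alpha$ a Lie group, $\pi_\alpha:G\to G_\alpha$ the natural projection, $K_\alpha$ compact normal and $\cap_\alpha K_\alpha=\{e\}$. By Proposition~\ref{T1}\,(1), $\pi_\alpha(C)$ is a Cartan subgroup of $G_\alpha$ for every $\alpha$, so the cited results of \cite{MS} apply to $\pi_\alpha(C)$ inside $G_\alpha$. I will repeatedly use two facts about the limit: first, for any closed subgroup $X$ of $G$ one has $\cap_\alpha XK_\alpha=X$ (so in particular $C=\cap_\alpha CK_\alpha$); second, if $gXg^{-1}K_\alpha=XK_\alpha$ for all $\alpha$ with $X$ closed, then $gXg^{-1}=X$.

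The technical point on which the first two statements rest is the identity $\pi_\alpha(C^0)=\pi_\alpha(C)^0$, which I expect to be the main obstacle. Since $K_\alpha$ is compact, the restriction $\pi_\alpha|_C:C\to\pi_\alpha(C)$ is a continuous open surjection with compact kernel $C\cap K_\alpha$ onto the (closed, hence Lie) subgroup $\pi_\alpha(C)$; for such a map the image of the identity component is the identity component of the image, a standard fact for locally compact groups. Granting this, $N_{G_\alpha}(\pi_\alpha(C^0))=N_{G_\alpha}(\pi_\alpha(C)^0)$ and $Z_{G_\alpha}(\pi_\alpha(C^0))=Z_{G_\alpha}(\pi_\alpha(C)^0)$, which is exactly what is needed to feed the Lie-group results back through $\pi_\alpha$. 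I also record that $\pi_\alpha(R)$ and $\pi_\alpha(N)$ are respectively the radical and the nilradical of $G_\alpha$, as already used in Proposition~\ref{abelian-conj} and Corollary~\ref{solv}.

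For (1), the inclusion $N_G(C)\subseteq N_G(C^0)$ is automatic as $C^0$ is characteristic in $C$. For the reverse, take $g\in N_G(C^0)$; then $\pi_\alpha(g)$ normalises $\pi_\alpha(C^0)=\pi_\alpha(C)^0$, so by \cite{MS}, Corollary~3.7, $\pi_\alpha(g)\in N_{G_\alpha}(\pi_\alpha(C)^0)=N_{G_\alpha}(\pi_\alpha(C))$, that is $gCg^{-1}K_\alpha=CK_\alpha$ for every $\alpha$; intersecting gives $gCg^{-1}=C$. Intersecting $N_G(C^0)=N_G(C)$ with $R$ and with $N$ yields $N_R(C^0)=N_R(C)$ and $N_N(C^0)=N_N(C)$. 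The containments $C\cap R\subseteq N_R(C)$ and $C\cap N\subseteq N_N(C)$ are clear; the reverse containments follow by the same transfer, using the Lie-group identities $N_{\pi_\alpha(R)}(\pi_\alpha(C)^0)=\pi_\alpha(C)\cap\pi_\alpha(R)$ and its nilradical analogue from \cite{MS}, Lemma~3.9, so that $N_R(C^0)\subseteq\bigl(\cap_\alpha CK_\alpha\bigr)\cap R=C\cap R$, and likewise for $N$.

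For (2), if $x\in Z_G(C^0)$ then $\pi_\alpha(x)\in Z_{G_\alpha}(\pi_\alpha(C)^0)\subseteq\pi_\alpha(C)$ by \cite{MS}, Lemma~3.9, whence $x\in\cap_\alpha CK_\alpha=C$. For the semisimple refinement one checks that each $G_\alpha$ is again semisimple (the quotient of a connected semisimple group by a compact normal subgroup is semisimple), so that $\pi_\alpha(C)\subseteq Z_{G_\alpha}(\pi_\alpha(C)^0)$ by \cite{MS}, Remark~3.10; then $\pi_\alpha([C,C^0])=\{e\}$ for all $\alpha$ forces $[C,C^0]\subseteq\cap_\alpha K_\alpha=\{e\}$, giving $C\subseteq Z_G(C^0)$. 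Finally, (3) is immediate and needs no limit argument: $C$ is maximal nilpotent in $H$ because it already is in $G\supseteq H$; and if $L$ is a closed normal subgroup of $C$ with $C/L$ compact and totally disconnected, then $N_H(L)/L=(N_G(L)\cap H)/L$ is a closed subgroup of the compact totally disconnected group $N_G(L)/L$, hence itself compact and totally disconnected, so condition~(II) of Definition~\ref{Cartan in l.c} holds and $C$ is a Cartan subgroup of $H$.
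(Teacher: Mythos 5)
Your proposal is correct and follows exactly the route the paper indicates: it deduces (1) and (2) from the corresponding Lie-group statements in \cite{MS} (Corollary 3.7, Lemma 3.9, Remark 3.10) transferred through the projections $\pi_\alpha$ of Proposition \ref{T1}, and proves (3) directly from Definition \ref{Cartan in l.c}; the paper states this reduction without giving details, and your write-up supplies them correctly (in particular the key identity $\pi_\alpha(C^0)=\pi_\alpha(C)^0$, valid because the kernel $C\cap K_\alpha$ is compact, and the intersection argument $\bigcap_\alpha CK_\alpha=C$).
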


In Lemma \ref{properties}, statements in (1) and (2) can be deduced easily from the same statements for Lie groups 
in \cite{MS} and Proposition \ref{mod-cpt}, and (3) follows easily from the definition of Cartan subgroups. We will not go into details.

Now we give an equivalent criteria to determine Cartan subgroups in connected locally compact groups, which is known for Lie groups 
(see Corollary 3.2 and Remark 3.10 of \cite{MS}).

\begin{prop} \label{cartan-equi-def} Let $G$ be a connected locally compact group. Then 
	$C$ is a Cartan subgroup of $G$ if and only if $C$ is a maximal nilpotent subgroup of $G$ and 
	$N_G(C^0)/C$ is compact and totally disconnected. If $G$ is solvable, then $C$ is a Cartan subgroup 
	of $G$ if and only if $C$ is connected and nilpotent, and $N_G(C)=C$. 
\end{prop}

\begin{proof} Let $C$ be a maximal nilpotent subgroup of $G$. Let $L$ be any closed normal subgroup of $C$ such that 
	$C/L$ is compact and totally disconnected. Then $L^0=C^0$ and $N_G(L)\subset N_G(C^0)$. Suppose $N_G(C^0)/C$ is compact 
	and totally disconnected. Then so is $N_G(C^0)/L$, and hence also, $N_G(L)/L$. Thus $C$ is a Cartan subgroup of $G$.  
	The converse follows easily as $N_G(C)=N_G(C^0)$ by Lemma \ref{properties}\,(1); which also implies that $N_G(C^0)/C$ is a 
	group. Now suppose $G$ is solvable. By Proposition \ref{abelian-conj}, any Cartan subgroup $C$ of $G$ is connected, and by 
	Lemma \ref{properties}\,(1), we have it $N_G(C)=C$. The converse in this case can easily be deduced using 
	Proposition \ref{mod-cpt} and Corollary 3.2 of \cite{MS}.
\end{proof}

	\section{Cartan subgroups in quotient groups}

In \cite{MS}, for a connected Lie group $G$, Cartan subgroups of quotient groups of $G$ are characterised as 
precisely the images of Cartan subgroups of $G$ (see Theorem 1.5 in \cite{MS}). The following generalises the 
same from Lie groups to connected locally compact groups.

\begin{theorem} \label{quotient} Let $G$ a connected locally compact group and let $H$ be a closed normal subgroup of $G$. 
	Then the following hold:
	\begin{enumerate}
		\item[{$(1)$}]  If $C$ is a Cartan subgroup of $G$, then $CH/H$ is a Cartan subgroup of $G/H$.
		\item[{$(2)$}]  If $C'$ is a Cartan subgroup of $G/H$, then there exists a Cartan subgroup of $C$ of $G$ such that $CH/H=C'$. 
	\end{enumerate}
\end{theorem}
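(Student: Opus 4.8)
The plan is to realise $G/H$ as a projective limit of Lie groups compatible with the chosen presentation of $G$, and then to drive everything by two engines already at hand: Proposition \ref{T1}, which governs quotients by \emph{compact} normal subgroups, and the Lie-group case of the theorem (Theorem 1.5 of \cite{MS}), which governs quotients of Lie groups. Fix $G=\varprojlim_\alpha G/K_\alpha$ with $K_\alpha$ compact normal, $\bigcap_\alpha K_\alpha=\{e\}$ and each $G/K_\alpha$ a Lie group. Since $H$ is closed, a short compactness argument (for $g\notin H$ the decreasing compact sets $gK_\alpha$ shrink to $\{g\}$ and hence eventually avoid the open set $G\setminus H$) gives $\bigcap_\alpha HK_\alpha=H$. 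Thus each $HK_\alpha/H$ is compact normal in $G/H$, these have trivial intersection, and $(G/H)/(HK_\alpha/H)=G/HK_\alpha=(G/K_\alpha)/(HK_\alpha/K_\alpha)$ is a Lie group; so $G/H=\varprojlim_\alpha G/HK_\alpha$ is a presentation of $G/H$ by Lie groups. I also write $K$ for the maximal compact normal subgroup of $G$, so $G/K$ is a Lie group.

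For part (1), let $C$ be a Cartan subgroup of $G$. For each $\alpha$, Proposition \ref{T1}(1) shows $CK_\alpha/K_\alpha$ is a Cartan subgroup of the Lie group $G/K_\alpha$, and Theorem 1.5 of \cite{MS} shows its image in $(G/K_\alpha)/(HK_\alpha/K_\alpha)=G/HK_\alpha$ is a Cartan subgroup. This image is exactly the projection of $CH/H$ to $G/HK_\alpha$. Hence every projection of $CH/H$ to the Lie quotients $G/HK_\alpha$ is a Cartan subgroup, and Proposition \ref{T1}(2a), applied to $G/H=\varprojlim_\alpha G/HK_\alpha$, yields that $CH/H$ is a Cartan subgroup of $G/H$ (in particular it is closed).

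For part (2), I would lift $C'$ in two stages through the single maximal compact normal subgroup $K$, which avoids having to choose compatible lifts across all $\alpha$ (as a direct appeal to Proposition \ref{T1}(2b) would require). Put $N:=HK/H$, a compact normal subgroup of $G/H$ with $(G/H)/N=G/HK$. By Proposition \ref{T1}(1) the image $\bar C$ of $C'$ in $G/HK$ is a Cartan subgroup; since $G/HK=(G/K)/(HK/K)$ with $G/K$ a Lie group, Theorem 1.5 of \cite{MS} lifts $\bar C$ to a Cartan subgroup $\tilde C$ of $G/K$, and Proposition \ref{T1}(1) then lifts $\tilde C$ to a Cartan subgroup $C_0$ of $G$ with $C_0K/K=\tilde C$. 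By part (1), $C_0H/H$ is a Cartan subgroup of $G/H$, and by construction it has the same image $\bar C$ in $G/HK=(G/H)/N$ as $C'$. It remains to match $C_0H/H$ with $C'$ itself, and this is the crux.

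The key step, and the main obstacle, is the conjugacy statement: if $Q$ is connected locally compact, $N\subset Q$ is compact normal, and $C_1,C_2$ are Cartan subgroups of $Q$ with $C_1N=C_2N$, then $C_2=nC_1n^{-1}$ for some $n\in N$. Granting it for $Q=G/H$ applied to $C_0H/H$ and $C'$, I get $n\in N=HK/H$ with $n(C_0H/H)n^{-1}=C'$; lifting $n=gH$ and replacing $C_0$ by $gC_0g^{-1}$ gives a Cartan subgroup $C$ of $G$ with $CH/H=C'$, finishing part (2). To prove the statement I would follow the reduction used for Theorem \ref{existence-cartan} and Proposition \ref{T1}: by Lemma \ref{L1}(i) and Lemma \ref{central-cartan} pass to $Q/Z(K_Q)$ and assume the maximal compact normal subgroup $K_Q=D(K_Q^0)$ is connected with trivial centre, so $Q=Z_Q^0(K_Q)\times K_Q$ with $K_Q=\prod_d R_d$, each $R_d$ a compact connected simple Lie group of trivial centre (Theorem 9.19 of \cite{HoMo1}); here $N\subseteq K_Q$ is a subproduct $\prod_{d\in S}R_d$. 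Every Cartan subgroup of $Q$ splits as a Cartan subgroup of $Z_Q^0(K_Q)$ times a product of maximal tori of the $R_d$, the hypothesis $C_1N=C_2N$ forces agreement of the two splittings off $S$, and on each $R_d$ with $d\in S$ the two maximal tori are conjugate inside $R_d$ by classical compact Lie theory (cf.\ \cite{Wi}); multiplying these conjugators yields $n\in N$. As the reduction was by a central subgroup, conjugation by the lifted element equals conjugation by its $N$-component, so $n$ may be taken in $N$. The only genuine difficulty is this conjugacy lemma; parts (1) and (2) are then bookkeeping built on Proposition \ref{T1} and the Lie-group case.
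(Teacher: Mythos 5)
Your part (1) is exactly the paper's own argument: project to the Lie quotients $G/HK_\alpha$, apply Theorem 1.5 of \cite{MS} there, and reassemble via Proposition \ref{T1}\,(2a); your verification that $\bigcap_\alpha HK_\alpha=H$ fills in a detail the paper only asserts. One caveat, which you share with the paper: the converse direction of Proposition \ref{T1}\,(2a) really requires the subgroup to be closed (equivalently $CH=\bigcap_\alpha CHK_\alpha$) -- a proper dense subgroup of an infinite product of circle groups containing the direct sum has all its Lie-quotient projections equal to Cartan subgroups without being one -- so strictly speaking what the argument delivers is that $\overline{CH}/H$ is Cartan, and closedness of $CH$ needs a separate check; this is precisely the paper's unproved claim that $CH/H$ ``is a projective limit'' of its images.

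Part (2) is where you genuinely diverge, and your route is correct. The paper reduces the pair $(G,H)$ step by step: it kills $Z(K)$ (Lemma \ref{central-cartan}), replaces $H$ by $H^0$ (since $H/H^0$ is central in $G/H^0$), quotients by $H\cap K$ via Proposition \ref{T1}\,(1) so that $H$ becomes a Lie subgroup meeting $K$ trivially, and then -- such an $H$ centralises $K$, hence lies in the Lie factor of $G=Z^0_G(K)\times K$ -- splits $C'$ as a product and lifts the Lie-factor component directly by Theorem 1.5 of \cite{MS}; no conjugation appears anywhere. You instead leave $H$ untouched, build an approximate lift $C_0$ through $G/HK\to G/K\to G$, and correct it by a conjugacy lemma: Cartan subgroups of $Q$ with the same image modulo a compact normal subgroup $N$ are conjugate by an element of $N$. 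That lemma is true and your sketch of it goes through: after killing $Z(K_Q)$ one has $Q=Z^0_Q(K_Q)\times\prod_d R_d$, the Lie components and the tori off $S$ are forced to coincide, and the final central-lifting step works because both Cartan subgroups contain $Z(K_Q)$ and so equal the full preimages of their images. The one ingredient beyond the paper's toolkit is that a closed normal subgroup of $\prod_d R_d$ (center-free simple factors) is a subproduct; this is true (a commutator argument, or cite \cite{HoMo1}), but it must be supplied. You could also streamline the factorwise torus conjugation: $\prod_{d\in S}T^{(1)}_d$ and $\prod_{d\in S}T^{(2)}_d$ are Cartan subgroups of the compact connected group $N$, so Proposition \ref{abelian-conj} conjugates them inside $N$ in one stroke. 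The trade-off: the paper's reductions handle $H$ head-on and need no conjugacy statement or normal-subgroup classification; your version concentrates all the difficulty in a conjugacy lemma of independent interest, at the cost of that extra structural fact.
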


\begin{proof} $(1):$ This can be proven easily using Theorem 1.5\,(a) of \cite{MS} and Proposition \ref{mod-cpt}. We 
	give a proof for the sake of completeness. Let $G=\varprojlim_\ap G_\alpha$, where $G_\alpha=G/K_\alpha$, 
	$\{K_\alpha\}$ and $\pi_\alpha:G\to G_\alpha$ are as in Remark \ref{Kap}. Let $C$ be a Cartan subgroup of $G$. By 
	Proposition \ref{mod-cpt}, $\pi_\ap(C)$ is a Cartan subgroup of $G_\ap$, for each $\ap$. As $\pi_\ap(H)$ is closed 
	and normal subgroup of $G_\ap$, by Theorem 1.5 of \cite{MS}, $(\pi_\ap(C)\pi_\ap(H))/\pi_\ap(H)$ is a Cartan 
	subgroup of $G_\ap/\pi_\ap(H)$, i.e.\ $CHK_\ap/HK_\ap$ is a Cartan subgroup of $G/HK_\ap$ for each $\ap$. 
	Since $CH/H$ (resp.\ $G/H$) is a projective limit of $CHK_\ap/HK_\ap$ (resp.\ $G/HK_\ap$), 
	by Proposition \ref{mod-cpt}\,(2a), we get that $CH/H$ is a Cartan subgroup of $G/H$. 
	
	\smallskip
	\noindent $(2):$ Suppose $C'$ is a Cartan subgroup of $G/H$. Let $\pi:G\to G/H$ be the natural projection. For the 
	largest compact normal subgroup $K$ of $G$, $Z(K)\subset Z(G)$, $\pi(Z(K))$ is central in $\pi(G)$ and it is 
	contained in $C'$ (cf.\ Lemmas \ref{mod-cpt} and \ref{central-cartan}). Note that $G/HZ(K)$ is a quotient of $G/H$ 
	whose kernel is $\pi(Z(K))$ and by (1), $C'/\pi(Z(K))$ is 
	a Cartan subgroup of $G/HZ(K)$. Now by Lemma \ref{central-cartan}, we may replace $G$, $H$ and $C'$ by $G/Z(K)$, 
	$HZ(K)/Z(K)$ and $C'/\pi(Z(K))$ respectively and assume that $K$ is connected, semisimple and it has trivial center 
	(see Lemma \ref{L1-cpt} \,(vi)). Then $K$ is a product of connected simple Lie groups, $Z^0_G(K)$ a connected Lie group 
	which does not have any nontrivial compact normal subgroup and $G=Z^0_G(K)\times K$. Since $G$ is connected and 
	$H$ is normal in $G$, we have that $H/H^0$ is totally disconnected and normal in $G/H^0$. Therefore, $H/H^0$ is central 
	in $G/H^0$, and by Lemma \ref{central-cartan}\,(2), there exists a Cartan subgroup in $G/H^0$ whose image in $G/H$ is 
	$C'$. Now we may assume that $H$ is connected. Let $M$ be the largest compact normal subgroup of $H$. Then $M$, 
	being characteristic in $H$, is normal in $G$. Therefore, $M\subset K$ and $H\cap K=M$. By Proposition \ref{mod-cpt}\,(1), 
	we may replace $G$ and $H$ by $G/M$ and $H/M$ respectively and assume that $H$ is a Lie group. 
	Now $H\cap K=\{e\}$, and as $H$ is normal in $G$, we get that $H\subset Z^0_G(K)$, which is a Lie group. Now  
	$G/H$ is isomorphic to $(Z^0_G(K)/H)\times K$, and we can choose a Cartan subgroups $C'_1$ of $Z^0_G(K)/H$ 
	and $C_2$ of $K$ such that for $C'=C'_1\times C_2$. By Theorem 1.5 of \cite{MS}, there 
	exists a Cartan subgroup $C_1$ of $Z^0_G(K)$ such that $C_1H/H=C'_1$. Let $C=C_1\times C_2$.
	Then $CH/H=C'$. 
\end{proof}

For a connected solvable locally compact group $G$, by Lemma 6.7 of \cite{Gl}, $\ol{[G,G]}$ is nilpotent, and hence for the nilradical $N$ of $G$, 
$G/N$ is abelian and a Cartan subgroup of itself. Using Theorem \ref{quotient} and Corollary \ref{solv}, the following can be deduced easily. 

\begin{cor} \label{c-solv} Let $G$ be a connected locally compact solvable group with the nilradical $N$. Then $C$ is a Cartan subgroup of $G$ if and
	only if $C$ is a maximal nilpotent group and $G=CN$.
\end{cor}

Using Theorem \ref{quotient}, Propositions \ref{abelian-conj} and \ref{cartan-equi-def}, we now derive a result about 
the structure of Cartan subgroups in a connected locally compact group which is an arbitrary product of locally compact 
groups, all but finitely many of which are compact.

\begin{prop} \label{product}
	Let $G$ be a connected locally compact group such that $G=\Pi_{\alpha\in \Lambda} G_\alpha$ with 
	the product topology {\rm (}$\Lambda$  is an arbitrary indexing set{\rm )}, where all but finitely many $G_\alpha$'s are 
	compact. Then the following hold:
	\begin{enumerate} 
		\item[{$(i)$}] Any Cartan subgroup $C$ of $G$ is of the form $C=\Pi_{\alpha\in\Lambda} \psi_\alpha(C)$, where 
		$\psi_\alpha:G\to G_\alpha$ is the natural projection and $\psi_\alpha(C)$ is a Cartan subgroup 
		of $G_\alpha$ for each $\alpha\in\Lambda$. 
		\item[{$(ii)$}] If $C_\alpha$ is a Cartan subgroup of $G_\alpha$ for each $\alpha\in\Lambda$, then 
		$C=\Pi_{\alpha\in\Lambda} C_\alpha$ is a Cartan subgroup of $G$. 		
	\end{enumerate}
\end{prop}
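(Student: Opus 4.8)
The plan is to exploit how the basic operations involved---taking identity components, normalisers, and quotients---all commute with arbitrary direct products, together with the fact that an arbitrary product of compact totally disconnected groups is again compact and totally disconnected. Write $\Lambda=\Lambda_0\sqcup\Lambda_1$, where $\Lambda_0$ is the finite set of indices with $G_\alpha$ non-compact and $\Lambda_1$ the (possibly infinite) set of indices with $G_\alpha$ compact. The decisive reduction will be Lemma \ref{cartan-def}: rather than verifying condition {\rm (II)} of Definition \ref{Cartan in l.c} for every closed normal $L$, it suffices to check that $C$ is maximal nilpotent and that $N_G(C^0)/C$ is compact and totally disconnected.

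For part $(i)$, first I would note that $G_\alpha\cong G/H_\alpha$ with $H_\alpha:=\ker\psi_\alpha=\Pi_{\beta\neq\alpha}G_\beta$ a closed normal subgroup, so Theorem \ref{quotient}\,(1) gives that $CH_\alpha/H_\alpha\cong\psi_\alpha(C)$ is a Cartan subgroup of $G_\alpha$ for each $\alpha$. To upgrade the evident inclusion $C\subseteq\Pi_\alpha\psi_\alpha(C)$ to an equality, I would observe that $\Pi_\alpha\psi_\alpha(C)$ is closed (a product of closed subgroups) and nilpotent: by Proposition \ref{abelian-conj} each $\psi_\alpha(C)$ with $\alpha\in\Lambda_1$ is abelian, while only the finitely many factors indexed by $\Lambda_0$ are genuinely nilpotent, of some finite classes, so the product has nilpotency class equal to their maximum. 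Since $C$ is maximal nilpotent and is contained in this nilpotent group, the two coincide.

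For part $(ii)$, condition {\rm (I)} follows by the same mechanism: $C=\Pi_\alpha C_\alpha$ is nilpotent of bounded class (abelian on $\Lambda_1$ by Proposition \ref{abelian-conj}, nilpotent on the finite set $\Lambda_0$), and if a nilpotent $N\supseteq C$ is given, then $\overline{\psi_\alpha(N)}$ is nilpotent and contains the maximal nilpotent $C_\alpha$, forcing $\psi_\alpha(N)\subseteq C_\alpha$ and hence $N\subseteq\Pi_\alpha C_\alpha=C$. For condition {\rm (II)} I would invoke Lemma \ref{cartan-def} and compute the relevant normaliser quotient coordinatewise. Using that the identity component and the normaliser of a product split as products, $C^0=\Pi_\alpha C_\alpha^0$ and $N_G(C^0)=\Pi_\alpha N_{G_\alpha}(C_\alpha^0)$; since $C_\alpha\subseteq N_{G_\alpha}(C_\alpha^0)$, this yields a topological isomorphism $N_G(C^0)/C\cong\Pi_\alpha\bigl(N_{G_\alpha}(C_\alpha^0)/C_\alpha\bigr)$. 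By Lemma \ref{cartan-def} applied in each $G_\alpha$, every factor $N_{G_\alpha}(C_\alpha^0)/C_\alpha$ is compact and totally disconnected, so the product is as well; hence $C$ is a Cartan subgroup of $G$.

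The main obstacle, and the point where the specific form of condition {\rm (II)} is essential, is the behaviour of the normaliser quotient over the infinite compact part. As Example \ref{ex1} shows, $N_G(C^0)/C$ can be an infinite group, so one cannot hope for it to have finite index as in Chevalley's original formulation; the argument succeeds precisely because each coordinate quotient is a finite Weyl-type group (compact and totally disconnected), and an arbitrary product of such groups remains compact and totally disconnected. A secondary technical point to be checked carefully is that the natural continuous bijection from $\Pi_\alpha N_{G_\alpha}(C_\alpha^0)\big/\Pi_\alpha C_\alpha$ onto $\Pi_\alpha\bigl(N_{G_\alpha}(C_\alpha^0)/C_\alpha\bigr)$ is a homeomorphism, which follows since a product of open quotient maps is again an open quotient map.
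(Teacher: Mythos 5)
Your proof is correct and follows essentially the same route as the paper: Theorem \ref{quotient} to identify the coordinate images as Cartan subgroups, a uniform bound on nilpotency classes (all but finitely many factors being abelian by Proposition \ref{abelian-conj}) for nilpotency and maximality, and Lemma \ref{cartan-def} together with the coordinatewise identification $N_G(C^0)/C\cong\Pi_{\alpha}\bigl(N_{G_\alpha}(C_\alpha^0)/C_\alpha\bigr)$ for condition (II). The only cosmetic difference is in part (i), where the paper bounds the nilpotency classes of the quotients $\psi_\alpha(C)$ directly by $l(C)$ instead of invoking Proposition \ref{abelian-conj} on the compact factors.
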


\begin{proof}
	$(i):$	 Let $C$ be any Cartan subgroup of $G$. Then $C$ is nilpotent, 
	and hence $\psi_\alpha(C)$ is nilpotent and $l(\psi_\alpha(C))\leq n=l(C)$. Therefore $\Pi_{\alpha\in\Lambda}\psi_\alpha(C)$ 
	is also nilpotent, and as it contains  the Cartan subgroup $C$ which is a maximal nilpotent group in $G$, we have that 
	$C=\Pi_{\alpha\in\Lambda}\psi_\alpha(C)$. Note that each $G_\alpha$ is isomorphic to $G/\ker\psi_\alpha$. Hence by 
	Theorem \ref{quotient}, $\psi_\alpha(C)$ is Cartan subgroup of $G_\alpha$ for each $\alpha$. 
	
	\smallskip
	\noindent $(ii):$ Let $C_\alpha$ be a Cartan subgroup of $G_\alpha$, $\alpha\in\Lambda$, and let 
	$C=\Pi_{\alpha\in\Lambda} C_\alpha$ as in the hypothesis. Since all but finitely many $G_\alpha$'s are compact, 
	we have by Proposition \ref{abelian-conj} that all but finitely many $C_\alpha$'s are abelian. Therefore, 
	$\{l(C_\alpha)\mid \alpha\in \Lambda\}$ is finite, and hence $C$ is nilpotent. Moreover, $C$ is a maximal nilpotent 
	subgroup in $G$ as each $C_\alpha$ is so in $G_\alpha$. By Proposition \ref{cartan-equi-def}, it is enough to show 
	that $N_G(C^0)/C$ is compact and totally disconnected.  Since we have that $C^0=\Pi_{\alpha\in\Lambda} C^0_\alpha$, 
	it follows that $N_G(C^0)=\Pi_{\alpha\in\Lambda} N_{G_\alpha}(C^0_\alpha)$, and $N_G(C^0)/C$ is isomorphic to 
	$\Pi_{\alpha\in\Lambda} (N_{G_\alpha}(C^0_\alpha)/C_\alpha)$. As $C_\alpha$ is a Cartan subgroup of $G_\alpha$, 
	by Proposition \ref{cartan-equi-def}, $N_{G_\alpha}(C^0_\alpha)/C_\alpha$ is compact and totally disconnected for each 
	$\alpha$. Therefore, $N_G(C^0)/C$ is compact and totally disconnected. Hence $C$ is a Cartan subgroup of $G$. 
\end{proof}

The following lemma which is known for Lie groups will be useful for the proof of Theorem \ref{decomp}.

\begin{lem} \label{rad-nil} Let $G$ be a connected locally compact group and let $R$ and $N$ be respectively the radical 
	and the nilradical of $G$. Let $C$ be a Cartan subgroup of $G$. Then $R=(C\cap R)N$ and both $C\cap R$ and $C\cap N$ are connected. 
\end{lem}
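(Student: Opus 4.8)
Throughout, $C$ denotes a Cartan subgroup of $G$, and I recall that the corresponding assertion is already known for connected Lie groups (see \cite{MS}). The plan is to reduce the statement to this Lie case through the structure theory of Lemma \ref{L1}, and then to transfer the two conclusions—the product decomposition and the connectivity—back to $G$ separately, since they behave quite differently under the reduction.

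First I would set up the reduction. Let $K$ be the maximal compact normal subgroup of $G$ with $G/K$ a Lie group. By Lemma \ref{L1}\,(i), $Z(K)\subseteq Z(G)$, so $Z(K)\subseteq C$, and by Lemma \ref{central-cartan} the image $\bar C:=p(C)$ under $p\colon G\to\bar G:=G/Z(K)$ is a Cartan subgroup of $\bar G$. Writing $\bar R,\bar N$ for the radical and nilradical of $\bar G$, I would first check that $\bar R=p(R)$ and $\bar N=p(N)$: since $Z(K)$ is central, the $p$-preimage of any solvable (resp.\ nilpotent) normal subgroup is again solvable (resp.\ nilpotent), so its identity component lies in $R$ (resp.\ $N$), and openness of $p$ yields the equalities. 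Crucially, because $Z(K)\subseteq C$, one also has $p(C\cap R)=\bar C\cap\bar R$ and $p(C\cap N)=\bar C\cap\bar N$; indeed, if $p(c)=p(r)$ with $c\in C$, $r\in R$, then $cr^{-1}\in Z(K)\subseteq C$ forces $r\in C\cap R$, and similarly for $N$. By Lemma \ref{L1}\,(vi) I may then regard $\bar G=\bar Z\times\bar K$, where $\bar K=K/Z(K)$ is connected semisimple with trivial centre and $\bar Z:=Z^0_{\bar G}(\bar K)$ is a connected Lie group; by Lemma \ref{L1}\,(vii) both $\bar R$ and $\bar N$ lie in $\bar Z$, and $\bar C=\bar C_1\times\bar C_2$ with $\bar C_1$ a Cartan subgroup of $\bar Z$. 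Hence $\bar C\cap\bar R=\bar C_1\cap\bar R$ and $\bar C\cap\bar N=\bar C_1\cap\bar N$, and the whole assertion for $\bar G$ is precisely the Lie-group statement for $\bar Z$ from \cite{MS}.

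Next I would lift the product decomposition, which is the easy transfer. Applying $p$ gives $p\big((C\cap R)N\big)=(\bar C\cap\bar R)\bar N=\bar R=p(R)$. Since $(C\cap R)N$ is a subgroup of $R$ containing $\ker(p|_R)=Z(K)\cap R$ (note $Z(K)\cap R\subseteq C\cap R$), surjectivity of $p|_R$ onto $p(R)$ forces $(C\cap R)N=R$, which is the first assertion.

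The delicate point, and the step I expect to be the main obstacle, is the connectivity. One cannot simply lift connectivity of $\bar C\cap\bar R$, because $C\cap R\to\bar C\cap\bar R$ has kernel $Z(K)\cap R$, which need not be connected. The remedy is to treat $C\cap N$ first: the kernel of $C\cap N\to\bar C\cap\bar N$ is $Z(K)\cap N$, a compact subgroup of $N$, and since the maximal compact subgroup $T$ of $N$ is connected, central in $G$, and therefore contained in $C^0$, one has $Z(K)\cap N\subseteq T\subseteq (C\cap N)^0$. As $\bar C\cap\bar N$ is connected and this kernel already sits inside $(C\cap N)^0$, it follows that $C\cap N=(C\cap N)^0$ is connected. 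To conclude that $C\cap R$ is connected I would then invoke the product formula just proved: $C\cap N=(C\cap R)\cap N$ is normal in $C\cap R$, and $(C\cap R)/(C\cap N)\cong (C\cap R)N/N=R/N$ is connected, so $C\cap R$, being an extension of a connected group by the connected group $C\cap N$, is itself connected. Thus the hard part is the connectivity transfer across the compact central subgroup $Z(K)$; the observation that unlocks it is that the relevant kernel for $C\cap N$ is absorbed into the connected central $T\subseteq C^0$, after which the product formula propagates connectivity from $C\cap N$ to $C\cap R$.
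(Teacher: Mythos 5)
Your proof is correct, but it takes a genuinely different route from the paper's. For the product formula $R=(C\cap R)N$, the paper argues intrinsically: it first shows that $R/N$ is central in $G/N$ (by a projective-limit argument over \emph{all} compact normal subgroups $K_\alpha$ with $G/K_\alpha$ Lie, using the corresponding Lie-group fact), and then invokes Theorem \ref{quotient} to conclude that $CN/N$, being a Cartan subgroup of $G/N$, contains the central subgroup $R/N$, whence $R\subseteq CN$. You instead lift the formula from the single Lie quotient $G/Z(K)\cong \bar Z\times\bar K$, which hinges on your (correct) observation that $Z(K)\subseteq C$ forces $p(C\cap R)=\bar C\cap\bar R$. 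For connectivity of $C\cap N$, the paper performs a two-step reduction (splitting off $D(K^0)$ so as to assume the maximal compact normal subgroup is central, then applying W\"ustner's theorem, \cite{Wu2} Theorem 1.9, in every Lie quotient $G/K_\alpha$ and concluding from $\cap_\alpha K_\alpha=\{e\}$), whereas you work with the single quotient by $Z(K)$ and observe that the kernel $Z(K)\cap N$ is absorbed into the connected maximal compact subgroup $T\subseteq (C\cap N)^0$ of the nilradical; this is genuinely slicker and avoids the projective limit entirely. Both proofs then deduce connectivity of $C\cap R$ from the product formula and connectivity of $C\cap N$ in essentially the same way. What your approach buys is brevity and one clean reduction; what it costs is heavier reliance on the Lie-group case (you need the full Lie-group statement of the lemma from \cite{MS}, plus the splitting of Cartan subgroups of $\bar Z\times\bar K$ as in Proposition \ref{T1}), while the paper needs only two more elementary Lie-group inputs (centrality of $R/N$ in $G/N$, and W\"ustner's connectivity theorem) together with its own Theorem \ref{quotient}, and it establishes en route the independently useful fact that $R/N$ is central in $G/N$ for all connected locally compact groups. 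One shared point of rigor: in the final step, the identification of $(C\cap R)/(C\cap N)$ with $R/N$ as topological groups (needed to transfer connectedness) should be justified by the open mapping theorem, which applies since $C\cap R$ is compactly generated and hence $\sigma$-compact; the paper's own phrase ``Since $R$ is connected, we get that $R=(C\cap R)^0N$'' glosses over the same issue.
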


\begin{proof} We first show that $R/N$ is central in $G/N$. This is known if $G$ is a Lie group. Let $\{K_\ap\}$, Lie groups 
	$G_\ap=G/K_\ap$  and $\pi_\ap:G\to G_\ap$ be as in Remark \ref{Kap}. As $\pi_\ap(R)$ and $\pi_\ap(N)$ are respectively the radical 
	and the nilradical of $G/K_\ap$, we get that $R\subset \{x\in G\mid xgx^{-1}g^{-1}\in NK_\ap\mbox{ for all } g\in G\}$. Since 
	this is true for all $K_\ap$, we get that $R/N$ is central in $G/N$. By Theorem \ref{quotient}, $CN/N$ is a Cartan subgroup of $G/N$, 
	and hence it contains $R/N$. Therefore, $R \subset CN$, i.e.\ $R=(C\cap R) N$, which proves the first assertion. 
	Since $R$ is connected, we get that $R=(C\cap R)^0N$, and hence that $C\cap R=(C\cap R)^0(C\cap N)$. Therefore, 
	$C\cap R$ is connected if $C\cap N$ is connected. 
	
	Now we show that $C\cap N$ is connected. If $G$ is compact, then $N=Z^0(G)\subset C$, and hence $C\cap N=N$ is connected. 
	Suppose $G$ is not compact. Let $K$ be the largest compact normal subgroup of $G$. By Lemma \ref{L1-cpt}, 
	$K=Z(K)D(K^0)$, $Z(K)\subset Z(G)$ and $G=G_1D(K^0)=D(K^0)G_1$, where $G_1:=Z^0_G(D(K^0))$. As 
	$G_1\cap D(K^0)$ is central in $D(K^0)$, and hence in $G$,  we can use Proposition \ref{mod-cpt} and 
	Lemma \ref{central-cartan} and get that $C=C_1C_2=C_2C_1$, where $C_1$ 
	(resp.\ $C_2$) is a Cartan subgroup of $G_1$ (resp.\ $D(K^0)$). Note that since $D(K^0)$ is semisimple, 
	its nilradical is trivial and it is easy to see that $N$ is the nilradical of $G_1$. Since 
	$C\cap G_1=C_1$, we get that $C\cap N=C_1\cap N$, and hence without loss of any generality we may 
	replace $G$ by $G_1$ and assume that the maximal compact normal subgroup $K$ of $G$ is abelian, 
	and hence, central in $G$. 
	
	Let $K_\ap$ and $\pi_\ap:G\to G/K_\ap$ be as above. 
	As $K$ is central in $G$, so is each $K_\ap$, and hence $K_\ap\subset C$. By Proposition \ref{mod-cpt}, we have that 
	$\pi_\ap(C)$ is a Cartan subgroup of the Lie group $G/K_\ap$. As $\pi_\ap(N)$ is the nilradical of $G/K_\ap$, we have  
	that $\pi_\ap(C) \cap \pi_\ap(N)$ is connected (cf.\ \cite{Wu2}, Theorem 1.9). Now 
	$CK_\ap\cap NK_\ap=(CK_\ap\cap N)K_\ap=(C\cap N)K_\ap$. Therefore, $C\cap N=(C\cap N)^0(K_\ap\cap N)$. 
	Since this is true for all $\ap$ and since $\cap_\ap K_\ap=\{e\}$, we get that $C\cap N=(C\cap N)^0$ and it is connected. 
\end{proof}

\section{Levi decompositions and Cartan subgroups}

In \cite{Mat}, a Levi decomposition for a connected locally compact group $G$ is obtained; namely, $G = SR$, where 
$R$ is the radical of $G$ and $S$ is either trivial or it is a Levi subgroup, i.e.\ a maximal connected semisimple subgroup of $G$.  
Note that $S$ is not closed in general. Recall that a connected locally compact group is said to be  semisimple, if it has no proper 
connected abelian (solvable) normal subgroup, i.e.\ its radical is trivial. Equivalently, a connected subgroup $S$ of $G$ is semisimple 
if for all $\ap$, $\pi_\ap(S)$ is 
semisimple in the Lie group $\pi_\ap(G)$, where $\pi_\ap:G\to G/K_\ap$ and $K_\ap$ are as in Remark \ref{Kap}. 
Also, $G=S_0K_0R$, $S=S_0 K_0=K_0S_0$, where $S_0$ is a connected semisimple Lie group without any nontrivial 
compact connected normal subgroup and $K_0$ is a compact connected semisimple group, $S_0\cap K_0$ is a finite central 
subgroup and $[S_0,K_0]=\{e\}$ (see \cite{Mat}). It is also shown in \cite{Mat}, that all Levi subgroups are conjugate to each 
other by elements of the radical $R$. Note that a Levi subgroup $S$ of $G$, being a product of a connected (semisimple) 
Lie group and a connected compact (semisimple) group, is locally compact. Let $K$ be any compact normal subgroup 
of $G$. Then $S\cap K$ is normal in $S$. Recall that $K=Z(K)D(K^0)$ and $D(K^0)$ is a normal semisimple subgroup of 
$G$. Therefore $D(K^0)$ is contained in $S$, and hence $D(K^0)\subset K_0$. Now we show that $S\cap K$ is compact. 
It is enough to show that $Z(K)\cap S$ is compact. In fact, 
$Z(K)K_0\cap S_0$ is central in $S$ and as $S_0$ is a Lie group, it is discrete and being closed in $S_0$ (cf.\ \cite{Rago}), it is 
compact and finite in $S_0$. Therefore, $K_0\cap Z(K)$ is a subgroup of finite index in $S\cap Z(K)$. Thus $S\cap Z(K)$, 
and hence $S\cap K$ is compact. It follows that $S$ is a projective limit of Lie groups $S/(S\cap K_\ap)$ which are isomorphic 
to $\pi_\ap(S)$. In fact, for all small $K_\ap$, $S_0\cap K_\ap=\{e\}$ and $S\cap K_\ap\subset K_0$. 
Therefore, $\pi_\ap(S)$ is isomorphic to $S_0(K_0/K_\ap)$ for all small $K_\ap$. 

It has been shown by W\"ustner that any Cartan subgroup $C$ in a connected Lie group has a `Levi' decomposition, 
i.e.\ there exists a Levi subgroup $S$ such that $C=(C\cap S)(C\cap R)$, where $C\cap S$ is a Cartan subgroup of $S$ 
which centralises $C\cap R$. Here, we extend this result to all connected locally compact groups $G$. Note that any 
Levi subgroup $S$ of $G$ is locally compact as mentioned above, and we have that $S=S_0K_0$, an almost direct 
product of a compact group $K_0$ and a Lie group $S_0$, and Cartan subgroups of $S$ are 
products of Cartan subgroups of $S_0$ and $K_0$. We need the following fact: for any Levi subgroup $S$, the 
group $[S,R]$ generated by $\{srs^{-1}r^{-1}\mid s\in S,\, r\in R\}$ is contained in the nilradical $N$ of $G$; this can be 
deduced by using the same assertion for connected Lie groups and Lie projectivity of $G$. We also need the following:

\begin{lem} \label{td-center} Let $G$ be a connected locally compact group and let $D$ be a totally disconnected compact 
	central subgroup of $G$. Let $G=SR$ be a Levi decomposition with a Levi subgroup $S$ and the radical $R$. Then any 
	$d\in D$ can be decomposed as $d=d_sd_r$, where $d_s\in Z(S)$ and $d_r\in Z_R(S)$, the centraliser of $S$ in $R$. 
	In particular, $d_s$ belongs to every Cartan subgroup of $S$. 
\end{lem}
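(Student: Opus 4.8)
The plan is to start from the factorisation supplied by $G=SR$ and then show that its $S$-part is automatically central. First I would write $d=sr$ with $s\in S$ and $r\in R$, and set $Z:=S\cap R$. The crux is to prove that this $s$ already lies in $Z(S)$; granting this, I put $d_s:=s$ and $d_r:=r=s^{-1}d$, and then $d_r$ centralises $S$ because both $d$ (central in $G$) and $s$ do, so $d_r\in Z_R(S)$. The final `in particular' assertion then follows at once: $S$ is a connected locally compact group, every Cartan subgroup of it is maximal nilpotent and hence contains $Z(S)$, so it contains $d_s$.

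Two observations about $Z=S\cap R$ set up the argument. Since $R$ is normal in $G$, $Z$ is a normal solvable subgroup of $S$; its closure $\ol Z$ is again normal and solvable, and as $S$ is connected and semisimple it has no nontrivial connected normal solvable subgroup, forcing $\ol Z$ to be totally disconnected. A totally disconnected normal subgroup of a connected group is central, so $Z\subset\ol Z\subset Z(S)$; in particular $Z$ is abelian. Next, the factorisation $G=SR$ is unique modulo $Z$: if $s_1r_1=s_2r_2$ with $s_i\in S$ and $r_i\in R$, then $s_2^{-1}s_1=r_2r_1^{-1}\in S\cap R=Z$.

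The key step measures the failure of $s$ to be central. For $s'\in S$, centrality of $d$ gives $s'ds'^{-1}=d$, that is $(s'ss'^{-1})(s'rs'^{-1})=sr$, with $s'ss'^{-1}\in S$ and $s'rs'^{-1}\in R$ (the latter because $R$ is normal). By the uniqueness just noted, $z(s'):=s^{-1}(s'ss'^{-1})\in Z$, and I would verify that $s'\mapsto z(s')$ is a continuous homomorphism $S\to Z$, the identity $z(s_1's_2')=z(s_1')z(s_2')$ using exactly that $z(s_2')\in Z\subset Z(S)$. As $Z$ is abelian, $z$ annihilates commutators, so $z([S,S])=\{e\}$; and since $S=S_0K_0$ is semisimple we have $\ol{[S,S]}=S$ (because $[S_0,S_0]=S_0$ and $\ol{[K_0,K_0]}=K_0$). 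Continuity then forces $z\equiv e$, i.e.\ $s$ commutes with every element of $S$, so $s\in Z(S)$, as required.

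I expect the main obstacle to be structural rather than computational: securing that $Z=S\cap R$ is central in $S$ — which uses that $S$ is closed (being locally compact) so that $\ol Z$ is a genuine totally disconnected normal subgroup — together with the fact that $S$ is topologically perfect, $\ol{[S,S]}=S$. Once these two inputs are in place, recognising the obstruction $z$ as a homomorphism into an abelian group makes its vanishing automatic and the decomposition drops out; notably, only the centrality of $d$ in $G$ enters, so the compactness and total disconnectedness of $D$ are not needed for this particular argument.
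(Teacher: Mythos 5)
Your proposal is correct, and its first half coincides with the paper's proof: both write $d=d_sd_r$ with $d_s\in S$, $d_r\in R$, and both use centrality of $d$ (nothing else about $D$) to conclude that the commutators $d_s x d_s^{-1}x^{-1}$, $x\in S$, land in $S\cap R$, which is totally disconnected because its closure is a solvable normal subgroup of the semisimple group $S$ whose identity component must be trivial. Where you diverge is the finishing move. The paper observes that $\{d_sxd_s^{-1}x^{-1}\mid x\in S\}$ is a connected subset of the totally disconnected group $S\cap R$ containing $e$, hence equals $\{e\}$ --- one line, using only connectedness of $S$. You instead first upgrade $S\cap R$ to a central subgroup of $S$ (totally disconnected normal in connected implies central), package the obstruction as a continuous homomorphism $z\colon S\to S\cap R$, and kill it using that the target is abelian and that $S=S_0K_0$ is topologically perfect, $\ol{[S,S]}=S$. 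All of these inputs are legitimately available (perfectness follows from Matsushima's structure $S=S_0K_0$ with $[S_0,S_0]=S_0$ and $D(K_0)=K_0$), so there is no gap; but note that your own intermediate result already makes the heavier machinery unnecessary: once you know $S\cap R$ is totally disconnected, the map $z$ is a continuous map from the connected group $S$ into a totally disconnected space with $z(e)=e$, hence constant --- which is precisely the paper's argument, with no need for the homomorphism identity or perfectness. What your longer route buys is the explicit fact $S\cap R\subset Z(S)$ (left implicit in the paper) and the observation, equally true of the paper's proof, that the lemma holds for any central element $d$, not just elements of a compact totally disconnected $D$.
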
 

\begin{proof} Let $d=d_sd_r$, where $d_s\in S$ and $d_r\in R$. Then $d_rd_s=d_sd_r$. For $x\in S$, we have 
	$e=dxd^{-1}x^{-1}=d_rd_sxd_s^{-1}d_r^{-1}x^{-1}$. Hence $d_sxd_s^{-1}x^{-1}=d_r^{-1}xd_rx^{-1}\in S\cap R$ for every 
	$x\in S$. Now $A=\{d_sxd_s^{-1}x^{-1}\mid x\in S\}$ is connected as $S$ is so. As $S\cap R$ is a normal solvable 
	subgroup of $S$ which is semisimple, $S\cap R$ is totally disconnected, and we get that $A=\{e\}$ and hence 
	$d_s\in Z(S)$ and $d_r\in Z_R(S)$. Therefore, $d_s$ belongs to every Cartan subgroup of $S$. 
\end{proof}

Generalising W\"ustner's decomposition theorem for Cartan subgroups in Lie groups and Theorem 1.1 of \cite{MS}, we get a 
`Levi' decomposition of Cartan subgroup in the following theorem. Recall that for subgroups $H$ and $M$ of $G$,  the group  
$Z_H(M)=\{x\in H\mid xyx^{-1}=y\mbox{ for all }y\in M\}$ is the centraliser 
of $M$ in $H$, and $Z^0_H(M)=(Z_H(M))^0$.

\begin{theorem} \label{decomp}
	Let $G$ be a connected locally compact group and let $R$ be the radical of $G$. 
	\begin{enumerate}
		\item[{$(1)$}]  Given a Cartan subgroup $C$ of $G$, there exists a Levi decomposition 
		$G=SR$ such that $C=(C\cap S)(C\cap R)$, where $C\cap S$ is a Cartan 
		subgroup of $S$ and $C\cap S$ centralises 
		$C\cap R$. Moreover, $Z_R(C\cap S)$ is connected and $C\cap R$ is a Cartan subgroup of $Z_R(C\cap S)$. 
		\item[{$(2)$}]  Given any Levi subgroup $S$ of $G$ and a Cartan subgroup $C_S$ of $S$, 
		$Z_R(C_S)$ is connected and for any Cartan subgroup 
		$C_{Z_R(C_S)}$ of it, $C_SC_{Z_R(C_S)}$ is a Cartan subgroup of $G$.
	\end{enumerate}
\end{theorem}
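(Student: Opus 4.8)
The plan is to lift the known Lie-group statement — W\"ustner's theorem \cite{Wu1} and Theorem 1.1 of \cite{MS} — to $G$ through the projective limit machinery of Proposition \ref{T1}, feeding in the structural facts already assembled: $C\cap R$ is connected and $R=(C\cap R)N$ (Lemma \ref{rad-nil}), the commutator bound $[S,R]\subset N$, the behaviour of Cartan subgroups under quotients (Theorem \ref{quotient}), and the splitting of totally disconnected central elements along a Levi decomposition (Lemma \ref{td-center}). I would begin (1) with the standard reduction: writing $K$ for the maximal compact normal subgroup, Lemma \ref{L1}\,(i,vi) and Lemma \ref{central-cartan} let me pass to $G/Z(K)$ and assume $K=D(K^0)$ is connected semisimple with trivial centre, so the compact semisimple part lies inside every Levi subgroup. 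Fixing a system $G=\varprojlim_\ap G/K_\ap$ of Lie quotients with projections $\pi_\ap$, Proposition \ref{T1} makes each $C_\ap:=\pi_\ap(C)$ a Cartan subgroup of $G_\ap$, and Theorem 1.1 of \cite{MS} supplies at each level a Levi decomposition $G_\ap=S_\ap R_\ap$ with $C_\ap=(C_\ap\cap S_\ap)(C_\ap\cap R_\ap)$, the factor $C_\ap\cap S_\ap$ being a Cartan subgroup of $S_\ap$ that centralises the connected group $C_\ap\cap R_\ap$.

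The step I expect to be the main obstacle is promoting these level-wise decompositions to a single Levi subgroup $S$ of $G$ adapted to $C$, since by \cite{Mat} Levi subgroups are unique only up to conjugacy by $R$ and the $S_\ap$ need not cohere under the maps $\pi_{\ap\beta}$. To organise the choice I would project $C$ to $G/R$: by Theorem \ref{quotient}, $CR/R$ is a Cartan subgroup of the semisimple group $G/R$, and through the isomorphism $G/R\cong S/(S\cap R)$ — with $S\cap R$ central and totally disconnected in $S$ — this singles out a Cartan subgroup of a fixed Levi subgroup $S$. I would then correct $S$ by an element of $R$ so that $C\cap S$ coincides with this Cartan subgroup and $C=(C\cap S)(C\cap R)$; the admissible corrections at each finite level form nonempty compact sets that are coherent under $\pi_{\ap\beta}$, and their inverse limit yields the required global $R$-conjugation. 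Here Lemma \ref{td-center} is exactly what guarantees that the totally disconnected central discrepancies between $C\cap S$ and the chosen Cartan subgroup of $S$ are absorbed into $Z(S)$ and $Z_R(S)$ and cause no harm.

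Granting $C=(C\cap S)(C\cap R)$, the centralising assertion follows from $[C\cap S,\,C\cap R]\subset[S,R]\cap C\subset N\cap C=C\cap N$, which is connected by Lemma \ref{rad-nil}: the conjugation action of the Cartan subgroup $C\cap S$ on the connected solvable group $C\cap R$ (which is normal in $C$) is semisimple, being driven by a Cartan subgroup of the Lie part and a maximal torus of the compact part of $S$, while nilpotency of $C$ forces that same action to be unipotent; being simultaneously semisimple and unipotent it is trivial. For the last clause I note $C\cap R\subset Z_R(C\cap S)$, that $Z_R(C\cap S)$ is closed in $R$ and hence solvable, and obtain its connectedness by projecting through the $G_\ap$ and invoking the corresponding Lie statement together with $\cap_\ap K_\ap=\{e\}$. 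That $C\cap R$ is a Cartan subgroup of the connected solvable group $Z_R(C\cap S)$ is then the solvable criterion of Lemma \ref{cartan-def}: $C\cap R$ is connected and nilpotent, and it is self-normalising there because any $x\in Z_R(C\cap S)$ normalising $C\cap R$ also normalises $C=(C\cap S)(C\cap R)$, whence $x\in N_R(C)=C\cap R$ by Lemma \ref{properties}\,(1).

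For the converse (2), I would first obtain connectedness of $Z_R(C_S)$ as above, by reduction to the Lie quotients through Proposition \ref{T1}. Setting $C:=C_S\,C_{Z_R(C_S)}$ for a Cartan subgroup $C_{Z_R(C_S)}$ of $Z_R(C_S)$, the two factors commute by the very definition of the centraliser, so $C$ is a product of commuting nilpotent groups and is therefore nilpotent. To see $C$ is a Cartan subgroup of $G$ I would check that $\pi_\ap(C)=\pi_\ap(C_S)\,\pi_\ap(C_{Z_R(C_S)})$ is a Cartan subgroup of $G_\ap$ for each $\ap$, using the converse half of Theorem 1.1 of \cite{MS}, and then conclude by Proposition \ref{T1}\,(2a). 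The delicate point, parallel to (1), is to verify that $\pi_\ap$ carries the data $(S,C_S,Z_R(C_S))$ to the corresponding Levi and centraliser data in $G_\ap$ — in particular that $\pi_\ap\big(Z_R(C_S)\big)=Z_{R_\ap}\big(\pi_\ap(C_S)\big)$; the connectedness of the centralisers together with the compactness of the kernels $K_\ap$ is what makes this surjectivity hold.
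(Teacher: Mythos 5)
Your proposal breaks down at the step you yourself flag as the main obstacle: promoting the level-wise W\"ustner decompositions $G_\ap=S_\ap R_\ap$ to a single Levi subgroup of $G$ adapted to $C$. You propose to realise the needed $R$-conjugation as an inverse limit of ``nonempty compact sets'' of admissible corrections at each finite level, but these sets are not compact. If $r\in R_\ap$ conjugates $S_\ap$ to a Levi subgroup adapted to $C_\ap$, then so does $rn$ for every $n\in N_{R_\ap}(S_\ap)\supset Z_{R_\ap}(S_\ap)$, and $Z_{R_\ap}(S_\ap)$ is typically non-compact (already for $G_\ap=S_\ap\times\R^n$); so each correction set is a union of cosets of a non-compact group. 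Without compactness, an inverse limit of nonempty closed sets over a directed system can be empty, and you cannot fall back on the countable-surjective-system trick either, since the index set $\Lambda$ need not have a countable cofinal subfamily (e.g.\ $G$ an uncountable product of compact simple Lie groups). So the existence of a single coherent $S$ --- which is essentially the content of part (1) --- is not established by your argument. The paper avoids this issue entirely: after the same reduction modulo $Z(K)$ that you perform, Lemma \ref{L1} gives the \emph{global} splitting $G=Z^0_G(K)\times K$ with $Z^0_G(K)$ a connected Lie group and $K$ compact semisimple lying in every Levi subgroup, so W\"ustner's theorem applies directly to the Lie factor and no coherence over levels is ever needed; the projective-limit machinery is only invoked afterwards, for the centralising statement and for connectedness of $Z_R(C\cap S)$ (Steps 3--4 of the paper's proof).

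A second, smaller gap: in both (1) and (2) you reduce connectedness of $Z_R(C\cap S)$ (resp.\ $Z_R(C_S)$) and the identification $\pi_\ap\bigl(Z_R(C_S)\bigr)=Z_{R_\ap}\bigl(\pi_\ap(C_S)\bigr)$ to ``the connectedness of the centralisers together with the compactness of the kernels,'' but this surjectivity is precisely the hard point and does not follow formally: an element $x\in R$ with $[x,C_S]\subset K_\ap$ need not be correctable inside $K_\ap$ to an honest centralising element. The paper handles this with the auxiliary sets $A_K=\{x\in N\mid [x,C\cap S]\subset K\}$, shows each is connected, and then needs the nontrivial fact that arbitrary intersections of connected subgroups of a simply connected nilpotent Lie group are connected; some such argument (or the semisimple-versus-unipotent Lie-algebra argument you sketch for the centralising step, made precise level-wise as in Proposition 3.4 of \cite{MS}) has to be supplied, not asserted. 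On the positive side, your reduction modulo $Z(K)$, your use of Lemma \ref{td-center}, and especially your self-normalising argument in Step 5 --- $x\in Z_R(C\cap S)$ normalising $C\cap R$ normalises $C=(C\cap S)(C\cap R)$, hence lies in $N_R(C)=C\cap R$ by Lemma \ref{properties}\,(1), so Lemma \ref{cartan-def} applies --- are correct, and the last of these is cleaner than the paper's own Step 5. Likewise, in (2) your observation that Theorem \ref{quotient} makes $\pi_\ap(C_{Z_R(C_S)})$ a Cartan subgroup of $\pi_\ap(Z_R(C_S))$ would let you treat an arbitrary Cartan subgroup of $Z_R(C_S)$ directly, where the paper instead invokes conjugacy via Proposition \ref{abelian-conj}; but that shortcut is only available once the surjectivity above is actually proved.
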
 

\begin{proof} $(1):$ If $G$ is solvable, then $S$ is trivial and the assertions follow easily. Suppose $G$ is not solvable. 
	If $G$ is compact, then $G=D(G)Z^0(G)$, where $D(G)=[G,G]$ is a compact connected 
	semisimple group. Note that $R=Z^0(G)$, which is contained in $C$. In particular $C\cap R$ is connected and 
	central in $C$. Now $C=(C\cap D(G)) Z^0(G)$ and using properties of Cartan subgroups, it is easy to see that $C\cap D(G)$ is 
	a Cartan subgroup of $D(G)$. Here $D(G)$ is the unique Levi subgroup of $G$. Moreover, 
	$C\cap R=Z^0(G)=R=Z_R(C\cap D(G))$. Thus (1) holds in this case. 
	
	\smallskip 
	\noindent {\bf Step 1:} We now prove (1) for a Cartan subgroup $C$ in a general connected locally compact group $G$. 
	It has been shown to hold when $G$ is compact. Now suppose $G$ is non-compact. 
	Suppose the maximal compact normal subgroup $K$ of $G$ has trivial center. By Lemma \ref{L1-cpt}, $K=D(K^0)$ and it is 
	a compact connected semisimple normal group contained in every Levi subgroup. Then $G=L\times K$, where $L:= 
	Z^0_G(K)$ is a connected Lie group as $L\cap K=Z(K)=\{e\}$. Hence, $C=C_L\times C_K$, where $C_L$ (resp.\ $C_K$) 
	is a Cartan subgroup of $L$ (resp.\ $K$). Note that the radical $R$ of $G$ is same as the radical of $L$ since 
	$K$ is semisimple and its radical is trivial.
	
	Now from W\"ustner's decomposition theorem, $L$ has a Levi decomposition $L=S_LR$ such that 
	$C_L=(C_L\cap S_L)(C_L\cap R)$, where $(C_L\cap S_L)$ is a Cartan subgroup of a Levi subgroup $S_L$ of $L$ 
	and it centralises $(C_L\cap R)$. As $K$ centralises $L$, 
	we have that $G=S_LKR=SR$, where $S=S_LK=KS_L$ is a Levi subgroup of $G$ and 
	$C=C_LC_K=(C_L\cap S_L)(C_L\cap R)C_K=(C_L\cap S_L)C_K(C_L\cap R)$. Hence $C= (C\cap S)(C\cap R)$. 
	
	Here, $(C_L\cap S_L)C_K$ is a Cartan subgroup of $S$ and it is contained in $C\cap S$, which is nilpotent. Therefore,
	$C\cap S=(C_L\cap S_L)C_K$. As $C_K$ centralises $R$ and $C_L\cap S_L$ centralises $C_L\cap R$, we have that 
	$C\cap S$ centralises $C\cap R$. This proves that the first part of statement (1) holds when $Z(K)$ is trivial. 
	
	\smallskip
	\noindent {\bf Step 2:} Suppose $Z(K)\ne\{e\}$. Then $Z(K)$ is central in $G$ (cf.\ Lemma \ref{L1-cpt}). 
	Let $\rho:G\to G/Z(K)$ be the natural projection. Then $G':=\rho(G)$ is a connected 
	locally compact group and by Lemma \ref{central-cartan}, $C'=\rho(C)$ is a Cartan subgroup of $G'$. By Lemma \ref{L1-cpt}, 
	$K':=\rho(K)$ is the maximal compact normal subgroup of $G'$ with trivial center. By Step 1, there exists a Levi decomposition 
	$G'=S'R'$, where $S'$ is a Levi subgroup of $G'$, $R'=\rho(R)$ is the radical of $G'$ and $C'=(C'\cap S')(C'\cap R')$, where 
	$C'\cap S'$ is a Cartan subgroup of $S'$ and it centralises $(C'\cap R')$. Now since all Levi subgroups of $G$ (resp.\ $G'$) are conjugate
	to each other (cf.\ \cite{Mat}) and their images in $G'$ are also Levi subgroups, there exists a Levi subgroup $S$ of $G$ such that 
	$\rho(S)=S'$. Since $\ker\rho=Z(K)$ is central in $G$, it is easy to see that $S$ is a unique Levi subgroup whose image is $S'$. Now 
	$\rho^{-1}(C')\cap \rho^{-1}(S')=(C\cap S)Z(K)$ and $\rho^{-1}(C'\cap R')=(C\cap R)Z(K)$. Therefore, $C=(C\cap S)(C\cap R)Z(K)$.
	Hence $C=(C\cap S)(C\cap R)D$ as $Z^0(K)\subset C\cap R$, where $Z(K)=Z^0(K)D$ for some compact totally disconnected 
	central subgroup $D$ of $G$ (cf.\ \cite{HoMo1}, Theorem 9.41). For any $d\in D$, by Lemma \ref{td-center}, we have that 
	$d=d_sd_r=d_rd_s$ with $d_s\in Z(S)$ and $d_r \in Z_R(S)$, and $d_s$ belongs to every Cartan subgroup of $S$. 
	Thus if $C\cap S$ is a Cartan subgroup of $S$, then $d_s\in C\cap S$, and since $D\subset C$, it 
	further implies that  $d_r\in C\cap R$ and that $C=(C\cap S)(C\cap R)$.
		
	Now we show that $C\cap S$ is a Cartan subgroup of $S$. Let $\rho_1:S\to S'$ be the restriction of $\rho$ to $S$. Since 
	$\ker\rho_1=Z(K)\cap S$ is central in $S$ and  $\rho_1(S)=S'$, we get that $\rho_1^{-1}(C'\cap S')$ is a Cartan subgroup 
	of $S$. From the above, we get that $\rho_1^{-1}(C'\cap S')\subset \rho^{-1}(C')\cap S=C\cap S$. Conversely, 
	$\rho_1(C\cap S)\subset \rho(C)\cap \rho(S)=C'\cap S'$. Therefore, $C\cap S$ is a Cartan subgroup of $S$. 
	
	\smallskip
	\noindent {\bf Step 3:} We show that $C\cap S$ centralises $C\cap R$ when $Z(K)\ne \{e\}$. (Note that it is already shown 
	in Step 1 when $Z(K)=\{e\}$). As noted above $\rho(C\cap S)$ centralises $\rho(C\cap R)$, and we get that   
	$$
	C\cap R\subset \{x\in R\mid xcx^{-1}c^{-1}\in Z(K) \mbox{ for all } c\in C\cap S\}.$$  
	Since $C\cap R$ is connected (cf.\ Lemma \ref{rad-nil}), we get that 
	$$
	C\cap R\subset \{x\in R\mid xcx^{-1}c^{-1}\in Z^0(K) \mbox{ for all } c\in C\cap S\}.$$ 
	
	Let $\{K_\ap\}$, Lie groups $G_\ap:=G/K_\ap$ and $\pi_\ap:G\to G_\ap$ be as in Remark \ref{Kap}. By Proposition \ref{mod-cpt}, 
	$\pi_\ap(C)=\pi_\ap(C\cap S)\pi_\ap(C\cap R)$ is a Cartan subgroup of $G_\ap$. Moreover, $S_\ap:=\pi_\ap(S)$ 
	is a Levi subgroup of $G_\ap$ and $R_\ap:=\pi_\ap(R)$ is the radical of $G_\ap$. As $S_\ap$ is isomorphic to 
	$S/(S\cap K_\ap)$, we also have that $\pi_\ap(C\cap S)$ is a Cartan subgroup of $S_\ap$. Now from the above, we get 
	that the image of $\pi_\ap(C\cap S)$ in $G_\ap/\pi_\ap(Z^0(K))$ centralises $\pi_\ap(C\cap R)/\pi_\ap(Z^0(K))$. As 
	$\pi_\ap(Z^0(K))$ is a connected central subgroup of $G_\ap$ and $C\cap R$ is connected, by Proposition 3.4 
	of \cite{MS},  $\pi_\ap(C\cap S)$ centralises $\pi_\ap(C\cap R)$. Therefore, 
	$C\cap R\subset \{x\in R\mid xcx^{-1}c^{-1}\in K_\ap \mbox{ for all } c\in C\cap S\}$ for all $\ap$. As 
	$\cap_\ap K_\ap=\{e\}$, we get that $C\cap R\subset Z_R(C\cap S)$, i.e.\  $C\cap S$ centralises $C\cap R$. 
	
	\smallskip
	\noindent {\bf Step 4:} Now we show that $Z_R(C\cap S)$ is connected. This is known if $G$ is a Lie group (cf.\ \cite{MS}). 
	From the above, $C\cap R\subset Z_R(C\cap S)$. By Lemma \ref{rad-nil}, we have that $C\cap R$ is connected and 
	$R=(C\cap R)N$. Therefore,  $Z_R(C\cap S)=(C\cap R)Z_N(C\cap S)$. As $C\cap R$ is connected, it is enough 
	to show that $Z_N(C\cap S)$ is connected. This is already known in case $G$ is a connected Lie group. 
	
	Let $\psi:G\to G/K$ be the natural projection, where $K$ is any compact normal subgroup of $G$ such that   
	$G/K$ is a Lie group. Now $G/K=S'R'$ is a Levi decomposition with $S':=\psi(S)$ as a Levi subgroup of $G/K$, and 
	$R':=\psi(R)$ and $N':=\psi(N)$ are the radical and the nilradical respectively of $G/K$. As shown earlier, 
	$S\cap K$ is closed in $S$ and it is compact, and $S'$ is isomorphic to $S/(S\cap K)$. By Proposition \ref{mod-cpt}, 
	$\psi(C\cap S)$ is a Cartan subgroup of $S'$. By Proposition 3.6 of \cite{MS}, $Z_{N'}(\psi(C\cap S))$ is connected. Let 
	$$
	A_K:=\{x\in N\mid xcx^{-1}c^{-1}\in K \mbox{ for all }c\in C\cap S\}.$$ 
	It is easy to see that $A_K=\psi^{-1}(Z_{N'}(\psi(C\cap S)))\cap N$ and $\psi(A_K)=Z_{N'}(\psi(C\cap S))$, 
	and since the latter group is connected and $K\cap N\subset A_K$, we have that  $A_K=A_K^0(K\cap N)$. 
	Note that $K\cap N$ is contained in the unique compact subgroup (say) $M$ of $N$. As $M$ is connected and central 
	in $G$, it is contained in $A^0_K$, and hence $A_K$ is connected. For $\{K_\ap\}$ as in Remark \ref{Kap}, we have that 
	$A_{K_\ap}$ is connected for each $\ap$ and $Z_N(C\cap S)=\cap_\ap A_{K_\ap}$. Now it is enough to prove that 
	$(\cap_\ap A_{K_\ap})/M=\cap_\ap (A_{K_\ap}/M)$ is connected in $N/M$ which is a simply connected nilpotent Lie group. 
	This follows from a general statement that in a simply connected nilpotent Lie group, an arbitrary intersection of connected 
	subgroups is connected. 
	
	\smallskip
	\noindent {\bf Step 5:} Now we have that $Z_R(C\cap S)$ is connected and $C\cap R\subset Z_R(C\cap S)$. 
	We show that $C\cap R$ is a Cartan subgroup of $Z:=Z_R(C\cap S)$. Suppose $C\cap R\subset C'$, where $C'\subset Z$ is a 
	nilpotent group. Then $C\cap S$ and $C'$ centralise each other and $(C\cap S)C'=CC'$ is nilpotent and as it contains $C$, we 
	have that $CC'=C$. Therefore, $C'=C\cap R$ and $C\cap R$ is a maximal nilpotent subgroup of $Z$. We know that $C\cap R$ 
	is connected. Since $Z$ is solvable, in view of Proposition \ref{cartan-equi-def}, it is enough to show that $N_Z(C\cap R)=C\cap R$. Let 
	$H:=N_Z(C\cap R)$. As shown in Step 4, $Z=(C\cap R)(Z\cap N)$. Therefore,  
	$H=(C\cap R)(H\cap N)$. Since $H\cap N$ is nilpotent and normal in $H$ and it normalises $C\cap R$, we get that $H$ 
	itself is nilpotent and hence $C\cap R$, being a maximal nilpotent subgroup of $Z$, is the same as $H$. Therefore, $C\cap R$ 
	is a Cartan subgroup of $Z=Z_R(C\cap S)$. 
	
	\smallskip
	\noindent $(2):$ Let $S$ be a Levi subgroup of $G$ and let $C_S$ be a Cartan subgroup of $S$. We have a 
	Levi decomposition $G=SR$ for the radical $R$ of $G$. We know that if $G$ is a Lie group, then (2) holds by 
	Theorem 1.1 of \cite{MS}. Suppose $G$ is not a Lie group. If $G$ is compact, then $G=Z^0(G)D(G)$ and $S=D(G)$ is the 
	unique Levi subgroup of $G$ and $R=Z^0(G)$. Therefore $C_SZ^0(G)=C_SR$ is a Cartan subgroup of $G$ and $C_S$ 
	centralises $R=C\cap R=Z_R(C_S)$. 
	
	Now suppose $G$ is a not compact.  Let $K$ be the maximal compact normal subgroup of $G$ 
	such that $G/K$ is a Lie group and let $\pi:G\to G/K$ be the natural projection. 
	Then $G/K$ is a Lie group and $G/K=\pi(S)\pi(R)$ is a Levi decomposition, and $\pi(C_S)$ is a Cartan subgroup of
	$\pi(S)$ (this follows from Theorem \ref{quotient} as $S\cap K$ is compact and normal in $S$, and $\pi(S)$ is isomorphic to 
	$S/(S\cap K)$). Now by Theorem 1.1 of \cite{MS}, there exists a Cartan subgroup $C'$ of $G/K$ such that 
	$C'=\pi(C_S)(C'\cap \pi(R))$ and $C'\cap\pi(R)$ centralises $\pi(C_S)$ and it is a Cartan
	subgroup of $Z_{\pi(R)}(\pi(C_S))$ which is connected. 
	
	Now suppose $K$ is abelian. Then $K$ is central in $G$ and by Lemma \ref{central-cartan}, $C:=\pi^{-1}(C')$ 
	is a Cartan subgroup of $G$. Here, $C_S\subset C\cap S$ which is nilpotent, and since $C_S$ is a Cartan subgroup of $S$, 
	we get that $C_S=C\cap S$. Now $CR=C_SKR=C_SDR$, where $D$ is a compact totally disconnected central subgroup of $G$. 
	By Lemma \ref{td-center}, $D\subset Z(S)R\subset C_SR$, and hence $C\subset C_S(C\cap R)$. Therefore, $C=C_S(C\cap R)$. 
	By Lemma \ref{rad-nil}, $C\cap R$ is connected. 
	
	Note that $\pi(C_S)$ is a Cartan subgroup of $\pi(S)$ and it centralises $\pi(C)\cap \pi(R)$, and we have that 
	$C\cap R\subset \{x\in R\mid xcx^{-1}c^{-1}\in K \mbox{ for all } c\in C_S\}$. As $C\cap R$ is connected,
	we get that $C\cap R\subset \{x\in R\mid xcx^{-1}c^{-1}\in K^0 \mbox{ for all } c \in C_S\}$. 
	As $K$ is abelian, and hence, central in $G$, arguing as in Step 3, we get that $C\cap R\subset Z_R(C_S)$. 
	Now we can show as in Steps 4 and 5 that  $Z_R(C_S)$ 
	is connected and that $C\cap R$ is a Cartan subgroup of $Z_R(C_S)$. 
	
	Now suppose $K$ is not abelian. Then $K=Z(K)D(K^0)$, where $Z(K)\subset Z(G)$ and $D(K^0)$ is a connected compact 
	semisimple normal subgroup contained in every Levi subgroup. By Lemma \ref{L1-cpt}, $G=G_1D(K^0)=D(K^0)G_1$, where 
	$G_1=Z_G^0(D(K^0))$ and $S=S_1D(K^0)=D(K^0)S_1$, where $S_1$ is a Levi subgroup of $G_1$. Now 
	$C_S=C_{S_1}C_1=C_1C_{S_1}$ where $C_{S_1}$
	(resp.\ $C_1$) is a Cartan subgroup of $S_1$ (resp.\ $D(K^0)$). Note that the largest compact normal subgroup of $G_1$ is 
	central in $G_1$ (as well as in $G$) and the radical $R$ of $G$ is the radical of $G_1$. Now arguing as above, we get that 
	there exists a Cartan subgroup $C_{G_1}$ of $G_1$ such that $C_{G_1}\cap S_1=C_{S_1}$, $C_{G_1}\cap R$ centralises 
	$C_{S_1}$ as well as $C_1$, $Z_R(C_S)=Z_R(C_{S_1})$ is connected and $C_{G_1}\cap R$ is a Cartan subgroup of 
	$Z_R(C_S)$. Now $C=C_{G_1}C_1$ is a Cartan subgroup of $G$ and $C\cap S=C_{S_1}C_1=C_S$ and 
	$C\cap R=C_{G_1}\cap R$ has the desired property. 
	
	By Proposition \ref{abelian-conj}, any Cartan subgroup $C_{Z_R(C_S)}$ of $Z_R(C_S)$ is a conjugate of $C\cap R$ 
	by an element of $Z_R(C_S)$, hence $C_SC_{Z_R(C_S)}$ is a conjugate of $C$, and it is a Cartan subgroup of $G$. 
\end{proof}

Using Theorems \ref{quotient} and \ref{decomp} along with the structure of Levi subgroups in \cite{Mat}, we prove the following 
corollary about the intersection of Levi subgroup with the center $Z(G)$ of a locally compact group $G$; it is known for Lie 
groups. 

\begin{cor} \label{center-ss}
	Let $G$ be a  connected locally compact group and let $S$ be a Levi subgroup of $G$. Then $Z(S)/(S\cap Z(G))$ is 
	compact and totally disconnected. 
\end{cor}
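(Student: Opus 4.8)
The plan is to read off both assertions from the structural description of a Levi subgroup in \cite{Mat}, namely $S=S_0K_0$ with $S_0$ a semisimple Lie group, $K_0$ a compact connected semisimple group, $[S_0,K_0]=\{e\}$ and $S_0\cap K_0$ finite. Since $S_0$ and $K_0$ commute one checks at once that $Z(S)=Z(S_0)Z(K_0)$, where $Z(K_0)$ is compact and totally disconnected (the centre of a compact connected semisimple group is profinite, cf.\ \cite{HoMo1}) and $Z(S_0)$ is a finitely generated discrete abelian group (the centre of a connected semisimple Lie group). Hence $Z(S)$ is a totally disconnected locally compact group, so $Z(S)^0=\{e\}$. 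Note also that $S\cap Z(G)\subseteq Z(S)$, so the quotient makes sense and equals $Z(S)/(Z(S)\cap Z(G))$. The total disconnectedness of the quotient is then automatic: by van Dantzig's theorem $Z(S)$ has a compact open (hence profinite) subgroup $U$, whose image in $Z(S)/(S\cap Z(G))$ is a compact open profinite subgroup, and the identity component of the quotient lies inside every open subgroup, hence is trivial.

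For compactness, which is the heart of the matter, I would first observe that since $Z(K_0)$ is compact and $Z(S)=Z(S_0)Z(K_0)$, it suffices to show that the image of $Z(S_0)$ in $Z(S)/(S\cap Z(G))$ is finite; as $Z(S_0)$ is finitely generated abelian, this amounts to showing that $Z(S_0)$ is torsion modulo $S\cap Z(G)$, i.e.\ that some power of every $z\in Z(S_0)$ centralises $R$. Write $G=\varprojlim_\alpha G_\alpha$ with $G_\alpha=G/K_\alpha$ Lie and $\cap_\alpha K_\alpha=\{e\}$, and let $\pi_\alpha\colon G\to G_\alpha$. By Proposition \ref{T1} and the structure in \cite{Mat}, $S_\alpha:=\pi_\alpha(S)$ is a Levi subgroup of $G_\alpha$ and $\pi_\alpha(z)\in Z(S_\alpha)$. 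Applying the \emph{known Lie case} of the corollary to each $G_\alpha$, the group $Z(S_\alpha)/(S_\alpha\cap Z(G_\alpha))$ is finite, so there is $m_\alpha\ge 1$ with $\pi_\alpha(z)^{m_\alpha}\in Z(G_\alpha)$; equivalently, writing $W_\alpha:=\{y\in Z(S)\mid [y,G]\subseteq K_\alpha\}$, which is a subgroup of $Z(S)$, we have $z^{m_\alpha}\in W_\alpha$.

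Next I fix one index $\alpha_0$ and set $w:=z^{m_{\alpha_0}}\in W_{\alpha_0}$. Using the fact $[S,R]\subseteq N$ (recorded in \S 5) together with $w\in W_{\alpha_0}$, we get $[w,r]\in N\cap K_{\alpha_0}$ for all $r\in R$; since $N\cap K_{\alpha_0}$ lies in the maximal compact subgroup $M$ of $N$, which is connected and central in $G$, the centrality of $M$ makes $\chi_w\colon R\to M$, $\chi_w(r)=[w,r]$, a continuous homomorphism, and makes $y\mapsto\chi_y$ a homomorphism on $W_{\alpha_0}$. As $R$ is connected, $\overline{\chi_w(R)}$ is a compact connected abelian subgroup of $M$. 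For every $\beta$ we have $w^{m_\beta}=(z^{m_\beta})^{m_{\alpha_0}}\in W_\beta$, so $\chi_{w^{m_\beta}}(R)=\{\chi_w(r)^{m_\beta}\mid r\in R\}\subseteq K_\beta$; since the $m_\beta$-th power map is surjective on the compact connected abelian group $\overline{\chi_w(R)}$, this forces $\overline{\chi_w(R)}\subseteq K_\beta$. Intersecting over $\beta$ yields $\overline{\chi_w(R)}\subseteq\cap_\beta K_\beta=\{e\}$, whence $w=z^{m_{\alpha_0}}$ centralises $R$ and lies in $S\cap Z(G)$. Thus $Z(S_0)$ is torsion, hence (being finitely generated) finite, modulo $S\cap Z(G)$, and $Z(S)/(S\cap Z(G))$, the product of this finite group with the compact image of $Z(K_0)$, is compact.

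I expect compactness to be the main obstacle, and within it the crux is taming the discrete centre $Z(S_0)$ of the non-compact semisimple factor: a single Lie quotient only shows that powers of $z$ become central \emph{modulo} $K_\alpha$, and the real work lies in upgrading this, uniformly over all $\alpha$, to genuine centrality in $G$. The device achieving this is the passage to the commutator homomorphism $\chi_w$ into the compact connected central subgroup $M$, combined with the surjectivity of power maps on compact connected abelian groups; these together let the individual conditions $\chi_w(R)^{m_\beta}\subseteq K_\beta$ collapse to $\chi_w\equiv e$ after intersecting over $\beta$. A secondary point to handle carefully is the reduction step asserting $Z(S)=Z(S_0)Z(K_0)$ with the stated properties of the two factors, for which I rely on the almost-direct-product structure of $S$ from \cite{Mat} and on \cite{HoMo1} for the profiniteness of $Z(K_0)$.
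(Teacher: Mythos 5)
Your proof is correct, but it takes a genuinely different route from the paper's. The paper argues by cases ($S$ compact, $S$ a non-compact Lie subgroup, then the general case reduced to the Lie one via $S=S_1L$ and the closed normal subgroup $S_1R$), and its crux is the equality $Z_K\cap S=Z(G)\cap S$ for one well-chosen compact normal subgroup $K$ with $K\cap S=\{e\}$; that equality is extracted from the Cartan-subgroup machinery: Theorem \ref{decomp} yields a Cartan subgroup $C=C_S(C\cap R)$ which $Z_K\cap S$ centralises, Theorem \ref{quotient} gives $G=C\,\overline{[G,G]}$, and then a single application of the known Lie case in $G/K$ produces the finite index. You bypass Cartan subgroups altogether: you invoke the known Lie case in \emph{every} Lie quotient $G_\alpha$, and upgrade ``some power of $z$ is central modulo $K_\alpha$'' to genuine centrality of one fixed power by means of the commutator homomorphism $\chi_w\colon R\to M$ into the maximal compact connected central subgroup of the nilradical, combined with divisibility of compact connected abelian groups and intersection over all indices. (Amusingly, the same commutator-homomorphism trick occurs in the paper's proof, but only to see that $Z_K$ centralises $[G,G]$; there it is coupled with the Cartan-subgroup theorems rather than with divisibility.) The trade-off: the paper's argument is short once Theorems \ref{decomp} and \ref{quotient} are available and showcases them, whereas yours is logically independent of the Cartan-subgroup theory, using only the Matsushima structure $S=S_0K_0$, the pro-Lie approximation, the Lie case, and divisibility, so it could stand before Sections 4 and 5. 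Three small points to tighten: the fact that $\pi_\alpha(S)$ is a Levi subgroup of $G_\alpha$ comes from the structure theory recalled in Section 5 rather than from Proposition \ref{T1}; the identity $Z(S)=Z(S_0)Z(K_0)$ deserves its one-line verification from $[S_0,K_0]=\{e\}$ and $S=S_0K_0$; and the finite generation of $Z(S_0)$, though standard for connected semisimple Lie groups, is not recorded in the paper and should be justified or referenced.
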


\begin{proof} We have a Levi decomposition $G=SR$ for the radical $R$ of $G$. As $S\cap Z(G)$ is central in $S$, using 
	the structure of $S=S_0K_0$ as mentioned above and a result of \cite{Rago} on Lie groups, one can show that it is a 
	closed subgroup of $S$. Suppose $S$ is compact. Then $Z(S)$ is compact and totally disconnected, and 
	the assertion follows trivially. Now suppose $S$ is a non-compact Lie subgroup. Since $G/R$ is also a Lie group, 
	there exists a compact normal subgroup $K$ of $G$ such that $G/K$ is a Lie group, $K\subset R$ and $K\cap S=\{e\}$. 
	This implies that $K$ is abelian, and by Lemma \ref{L1-cpt}, it is central in $G$. Let 
	$$
	Z_K=\{x\in G\mid xgx^{-1}g^{-1}\in K \mbox{ for all } g\in G\}.$$
	Then $K\subset Z(G)\subset Z_K$. We now show that  $Z_K\cap S= Z(G)\cap S$. 
	Since $K$ is central in $G$, $Z_K$ centralises $[G,G]$, and hence also $\ol{[G,G]}$. As $S\cap K=\{e\}$ 
	(or as $S\subset [G,G]$), we get that $Z_K\cap S$ is central in $S$, i.e.\ $Z_K\cap S\subset Z(S)$, and hence 
	it is contained in every Cartan subgroup of $S$. Let $C_S$ be a Cartan subgroup of $S$. By Theorem \ref{decomp}, 
	there exists a Cartan subgroup $C$ of $G$ such that $C=C_S(C\cap R)$ and $C_S$ centralises $C\cap R$. 
	Therefore, $Z_K\cap S$ 
	centralises $C$. Let $\varrho:G\to G/\ol{[G,G]}$ be the natural projection. By Theorem \ref{quotient},
	$\varrho(C)$ is a Cartan subgroup of $\varrho(G)$. As $\varrho(G)$ is abelian $\varrho(C)=\varrho(G)$. 
	Therefore, $G=C\ol{[G,G]}$, and it follows that $Z_K\cap S$ centralises $G$, i.e.\ $Z_K\cap S\subset Z(G)$ and 
	$S\cap Z_K=S\cap Z(G)=Z(S)\cap Z(G)$. 
	
	Let $\pi:G\to G/K$ be the natural projection. Then $\pi(Z(S))=Z(\pi(S))$ as $K\cap S=\{e\}$. Also,  
	$Z_K=\pi^{-1}(Z(\pi(G)))$. As $\pi(G)$ is a Lie group, we get that $Z(\pi(G))\cap \pi(Z(S))$ has finite index in 
	$\pi(Z(S))$. Therefore, $Z_K\cap Z(S)K=(Z_K\cap Z(S))K=(Z(G)\cap S)K$ has finite index in 
	$Z(S)K$. Now as $K\cap S=\{e\}$, it follows that $S\cap Z(G)=Z(S)\cap Z(G)$ has finite index in $Z(S)$. 
	
	Now suppose $S$ is not compact and it is not a Lie group. By Theorem 1 of \cite{Mat}, $S=S_1L= LS_1$, where  
	$S_1\ne\{e\}$ is a connected Lie group without any compact factors, $L$ is the largest compact connected normal 
	(semisimple) subgroup of $S$, and $L$ and $S_1$ centralise each other. Now $Z(S)=Z(S_1)Z(L)$ where $Z(L)$ is compact
	and totally disconnected and $Z(S_1)$ is a discrete subgroup of $S_1$. Therefore, it is enough to show that $Z(S_1)\cap Z(G)$
	has finite index in $Z(S_1)$. Since $S_1R$ is a closed normal subgroup of $G$ (cf.\ \cite{Mat}, Theorem 1), it is locally compact 
	and as $S_1$ is a Levi subgroup of $S_1R$, we get from the above that $Z(S_1)\cap Z(S_1R)$ has finite index in $Z(S_1)$. 
	As $Z(S_1)$ is central in $S=S_1L$, we get that $Z(S_1)\cap Z(G)=Z(S_1)\cap Z(S_1R)$ has finite index in $Z(S_1)$. 
\end{proof}

The following generalises the same result from connected Lie groups to connected locally compact groups 
(see Lemmas 3.8 and 3.9\,(3) of \cite{MS}). 

\begin{cor} \label{centraliser-conn}
	Let $C$ be a Cartan subgroup of a connected locally compact group. Then the following hold:
	\begin{enumerate}
		\item[{$(1)$}]  $C=Z_G(C^0)C^0$. 
		\item[{$(2)$}]  $C/(Z(G)C^0)$ is finite.
	\end{enumerate}
\end{cor}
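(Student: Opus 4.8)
The plan is to prove the two assertions in turn, using the Levi decomposition of $C$ supplied by Theorem \ref{decomp} together with the structural facts already recorded. (If $G$ is solvable then $C=C^0$ by Lemma \ref{cartan-def} and both statements are immediate, so I may assume the Levi subgroup is nontrivial.)

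For $(1)$, one inclusion is free: Lemma \ref{properties}\,(2) gives $Z_G(C^0)\subset C$, and trivially $C^0\subset C$, so $Z_G(C^0)C^0\subset C$. For the reverse inclusion I would fix, via Theorem \ref{decomp}\,(1), a Levi decomposition $G=SR$ with $C=(C\cap S)(C\cap R)$, where $C_S:=C\cap S$ is a Cartan subgroup of $S$ centralising $C\cap R$. By Lemma \ref{rad-nil}, $C\cap R$ is connected, hence $C\cap R\subset C^0$. The key intermediate step is to identify $C^0=(C_S)^0(C\cap R)$: since $C\cap R$ is a closed connected normal subgroup of $C$ with $C/(C\cap R)\cong C_S/(C_S\cap R)$, whose identity component is the image of $(C_S)^0$ because $C_S\cap R\subset S\cap R$ is totally disconnected ($S$ being semisimple), one recovers $C^0=(C_S)^0(C\cap R)$. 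Now a Levi subgroup is semisimple, so the semisimple case of Lemma \ref{properties}\,(2) applied to $S$ and $C_S$ yields $Z_S\bigl((C_S)^0\bigr)=C_S$; in particular every element of $C_S$ centralises $(C_S)^0$. As $C_S$ also centralises $C\cap R$, it centralises all of $C^0=(C_S)^0(C\cap R)$, i.e. $C_S\subset Z_G(C^0)$. Hence $C=C_S(C\cap R)\subset Z_G(C^0)C^0$, giving equality.

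For $(2)$ it suffices to show $Z(G)C^0$ has finite index in $C$. Using $C=C_S(C\cap R)$ with $C\cap R\subset C^0$ and the structure $S=S_0K_0$ of a Levi subgroup ($S_0$ a semisimple Lie group, $K_0$ compact semisimple, $[S_0,K_0]=\{e\}$), I would write $C_S=C_{S_0}C_{K_0}$, where $C_{K_0}$ is a Cartan subgroup of the compact group $K_0$, hence connected by Proposition \ref{abelian-conj}; thus $C_{K_0}\subset C^0$ and $C=C_SC^0$, so $C/(Z(G)C^0)\cong C_S/(C_S\cap Z(G)C^0)$. Note $Z(S)=Z(S_0)Z(K_0)\subset C_S$. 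I would combine two facts. First, applying the Lie-group version of the present corollary (\cite{MS}) to $S_0$ shows $Z(S_0)(C_{S_0})^0$ has finite index in $C_{S_0}$; since $(C_S)^0\subset C^0$, the image of $Z(S)$ in $C/(Z(G)C^0)$ has \emph{finite index}. Second, I must prove this image is itself \emph{finite}, i.e. that $Z(S)/(Z(S)\cap Z(G)C^0)$ is finite.

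This last finiteness is the main obstacle, and it is exactly where the hypotheses on the compact factors enter. By Corollary \ref{center-ss}, $Z(S)/(S\cap Z(G))=Z(S)/(Z(S)\cap Z(G))$ is compact and totally disconnected. Since $Z(K_0)$ lies in every maximal torus of $K_0$, we have $Z(K_0)\subset C_{K_0}\subset C^0$, whence $(Z(S)\cap Z(G))Z(K_0)\subset Z(S)\cap Z(G)C^0$, and $Z(S)/(Z(S)\cap Z(G)C^0)$ is a quotient of the compact totally disconnected group $Z(S)/\bigl((Z(S)\cap Z(G))Z(K_0)\bigr)$. Because $Z(S)=Z(S_0)Z(K_0)$, this last quotient is generated by the image of $Z(S_0)$, and the centre of a connected semisimple Lie group is finitely generated; a compact totally disconnected group that is finitely generated as an abstract group must be finite (an infinite profinite group is uncountable). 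Hence the image of $Z(S)$ in $C/(Z(G)C^0)$ is finite, and being also of finite index it forces $C/(Z(G)C^0)$ to be finite, proving $(2)$. The delicate point throughout is precisely this reconciliation of the finitely generated centre coming from the Lie factor $S_0$ with the (possibly infinite) compact totally disconnected centre contributed by the compact semisimple factors, which is governed exactly by Corollary \ref{center-ss}.
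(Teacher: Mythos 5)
Your proof of (1) follows the paper's own route (Theorem \ref{decomp}, then $Z_S(C_S^0)=C_S$ from the semisimple case of Lemma \ref{properties}\,(2), then $C_S\subset Z_G(C^0)$), but the justification you give for the intermediate identity $C^0=(C_S)^0(C\cap R)$ is genuinely flawed. You argue that the identity component of $C_S/(C_S\cap R)$ is the image of $(C_S)^0$ ``because $C_S\cap R$ is totally disconnected''. That inference is false for locally compact groups: take $H=\R\times\Z_p$ and $D=\{(n,n)\mid n\in\Z\}$; then $D$ is closed, central and discrete (hence totally disconnected), yet $H/D$ is the $p$-adic solenoid, which is connected, while the image of $H^0$ is only a proper dense subgroup. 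So identity components do not pass to quotients by totally disconnected kernels, and total disconnectedness of $C_S\cap R$ by itself proves nothing. The identity you need is nevertheless true, for a different reason: $(C_S)^0$ is \emph{open} in $C_S$. Indeed $C_S=C_{S_0}C_{K_0}$ with $C_{K_0}$ compact connected, so $C_{S_0}^0C_{K_0}$ is a closed subgroup of $C_S$ (closed times compact) of countable index (since $C_{S_0}$, a closed subgroup of a connected Lie group, has countably many components), hence open by Baire, and it is connected, so it lies in $(C_S)^0$. Then the image of $(C_S)^0(C_S\cap R)$ in $C_S/(C_S\cap R)$ is an open subgroup, hence contains the identity component, and one recovers $C^0=(C_S)^0(C\cap R)$. (The paper silently skips this point entirely, passing from ``$C_S=Z_S(C_S^0)$ and $C_S$ centralises $C\cap R$'' straight to ``$C_S\subset Z_G(C^0)$''; you correctly isolated the missing step, but it needs this repair, not the totally-disconnected-kernel principle.)

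Your part (2) is correct and takes a genuinely different route at the decisive finiteness step. Both you and the paper reduce, via the almost direct product $S=S_0K_0$ and the Lie-group result $[C_{S_0}:Z(S_0)C_{S_0}^0]<\infty$, to showing that the image of the centre of the Lie factor in $C/(Z(G)C^0)$ is finite. The paper (in its notation $S=S_1L$) obtains this by applying Corollary \ref{center-ss} to the closed subgroup $S_1R$, of which $S_1$ is a \emph{Lie} Levi subgroup: there $Z(S_1)$ is discrete, so ``compact and totally disconnected'' immediately forces $Z(S_1)/(Z(S_1)\cap Z(S_1R))$ to be finite, and $Z(S_1)\cap Z(S_1R)\subset Z(G)$. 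You instead apply Corollary \ref{center-ss} to $G$ itself, whose Levi subgroup $S$ need not be a Lie group; the image of $Z(S_0)$ in $Z(S)/\bigl((Z(S)\cap Z(G))Z(K_0)\bigr)$ then need not be discrete, so you replace the discreteness argument by an abstract one: that quotient is finitely generated (being covered by the image of $Z(S_0)$) and profinite, and an infinite profinite group is uncountable, hence it is finite. Both arguments are sound; yours uses Corollary \ref{center-ss} only through its statement, at the price of two extra standard facts (finite generation of the centre of a connected semisimple Lie group, and uncountability of infinite profinite groups), whereas the paper's application to $S_1R$ lets discreteness of $Z(S_1)$ do that upgrading for free.
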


\begin{proof} 
	If $G$ is a connected Lie group, then (1) and (2) hold (cf.\ \cite{MS}, Lemmas 3.8 and 3.9\,(3)). 
	If $G$ is solvable, or more generally, if $G/R$ compact for the radical $R$ of $G$, then $C$ is connected 
	(cf.\ Proposition \ref{abelian-conj}), and hence both (1) and (2) hold trivially. 
	
	Now suppose $G$ is not a Lie group and $G/R$ is not compact. Let $G=SR$ be a Levi decomposition such
	that $C=(C\cap S)(C\cap R)$, where $C_S:=C\cap S$ is a Cartan subgroup of the Levi subgroup $S$ and $C\cap R$ 
	is connected and it centralises $C_S$ (cf.\ Theorem \ref{decomp}). By Lemma \ref{properties}\,(2),  $C_S=Z_S(C_S^0)$. 
	Therefore, as $C_S$ centralises $C\cap R$, we have that $C_S\subset Z_G(C^0)$, and hence 
	$C\subset Z_G(C^0)(C\cap R)=Z_G(C^0)C^0$. Conversely, by Lemma \ref{properties}\,(2), $Z_G(C^0)\subset C$, 
	and hence (1) holds. 
	
	Now we prove (2). By Theorem 1 of \cite{Mat}, $S=S_1L$, where $S_1\ne\{e\}$ is a connected 
	Lie group without any compact factors, $L$ is the largest compact connected normal (semisimple) subgroup of $S$, and $L$ 
	and $S_1$ centralise each other. For $C_S=C\cap S$ as above, we have that  $C_S=C_1C_L=C_LC_1$, where $C_1$ 
	(resp.\ $C_L$) is a Cartan subgroup of $S_1$ (resp.\ $L$). By Proposition \ref{abelian-conj}, $C_L$ is connected as $L $ is compact.
	Since $S_1$ is a Lie group, we know that $Z(S_1)C_1^0$ is a subgroup of finite index in $C_1$ and since  
	$C=C_1C_L(C\cap R)=C_1C^0$, we get that $C/(Z(S_1)C^0)$ is finite. Now we show that $Z(S_1)/(Z(S_1)\cap Z(G))$ is finite.
	We already know that $Z(S_1)$ centralises $L$. Since $S_1R$ is closed, it is locally compact. As $S_1$ is a Lie group, $Z(S_1)$ is 
	a discrete finitely generated abelian subgroup in $S$, and by Corollary \ref{center-ss}, $Z(S_1)\cap Z(S_1R)$ has finite index in $Z(S_1)$. 
	As $S_1$ centralises $L$, we get that $Z(S_1)\cap Z(S_1R)\subset Z(G)$ and $Z(S_1)\cap Z(G)$ has finite index in $Z(S_1)$. 
	Therefore, $C_1/(C_1^0(Z(S_1)\cap Z(G)))$ is finite. As $C=C_1C^0$ and $Z(G)\subset C$, from the above we get that 
	$C/(Z(G)C^0)$ is finite. 
\end{proof}

	\section{Cartan subgroups and maximal tori of the radical}

Here, we would like to equate Cartan subgroups of a connected locally compact group $G$ and those of 
centralisers in $G$ of maximal tori of the radical of $G$. Any connected locally compact group, being Lie projective, admits 
maximal compact subgroups which are connected, and conjugate to each other, 
and they are abelian if $G$ is solvable (cf.\ \cite{I} Theorem 13, Lemma 2.2). We observe that the maximal compact normal 
subgroup of the radical $R$ of $G$ is central in $G$  (by Lemma \ref{L1-cpt}). Following a terminology for Lie groups, we will call 
such a maximal compact subgroup in $R$ a maximal torus of $R$. 
If $G$ is a Lie group, it follows from \cite{Go} that every Cartan subgroup of $G$ contains a maximal torus of the radical $R$.  
It has been shown in Theorem 1.2 of \cite{MS}, that the centraliser in a connected Lie group $G$ of any maximal torus of $R$ 
is connected and its Cartan subgroups are Cartan subgroups of $G$; moreover, all Cartan subgroups arise in this way.
The following lemma will be useful for generalising Theorem 1.2 of \cite{MS} to all connected locally compact 
groups. 

\begin{lem} \label{tori} Let $G$ be a connected locally compact group, $K$ be a compact normal subgroup of $G$ and let 
	$\psi:G\to G/K$ be the natural projection. Let $R$ be the radical of $G$ and let $T_R$ be any maximal torus of $R$. 
	Then $K\subset Z_G(T_R)$, $\psi(T_R)$ is a maximal torus of the radical $\psi(R)$ of $\psi(G)$, 
	$\psi(Z_G(T_R))=Z_{\psi(G)}(\psi(T_R))$ and $Z_G(T_R)$ is connected. 
\end{lem}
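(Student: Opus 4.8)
The four assertions can be proved in the order stated, the third being the crucial one. For the first, I would simply invoke Lemma \ref{L1}(vii): since $K$ is a compact normal subgroup of $G$, the radical satisfies $R\subseteq Z_G^0(K)$, so every element of $R$, and in particular every element of $T_R\subseteq R$, centralises $K$; hence $K\subseteq Z_G(T_R)$. For the second, I would first note that $\psi(R)$ is the radical of $\psi(G)$ (the image of the radical under a quotient by a compact normal subgroup is again the radical, exactly as used for the projections $\pi_\ap$ in the proof of Lemma \ref{rad-nil}). To see that $\psi(T_R)$ is a maximal torus of $\psi(R)$, recall from \cite{I} that maximal compact subgroups of a connected solvable group are conjugate and connected. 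Given a maximal compact subgroup $\bar T$ of $\psi(R)$, its preimage in $R$ is compact, being an extension of the compact $\bar T$ by the compact $R\cap K$; thus its identity component lies in a conjugate $rT_Rr^{-1}$ of the maximal compact subgroup $T_R$, and projecting back, $\bar T=\bar T^0$ lies in a conjugate of $\psi(T_R)$. By maximality of $\bar T$ this forces $\psi(T_R)$ to be maximal as well.

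For the third assertion the inclusion $\psi(Z_G(T_R))\subseteq Z_{\psi(G)}(\psi(T_R))$ is immediate from the fact that $\psi$ is a homomorphism. Since $K\subseteq Z_G(T_R)$ by the first part, the reverse inclusion is equivalent to the statement that whenever $x\in G$ satisfies $[x,T_R]\subseteq K$, one actually has $[x,T_R]=\{e\}$. I would first identify the relevant central torus: $K\cap R$ is a compact normal subgroup of $G$ contained in $R$, so by Lemma \ref{L1}(vii) $R$ centralises it, which forces $K\cap R$ to be abelian, and then Lemma \ref{L1}(i) gives $K\cap R\subseteq Z(G)$; moreover $T_R(K\cap R)$ is a compact subgroup of $R$ containing the maximal torus $T_R$, so $K\cap R\subseteq T_R$. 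Now for $x$ as above each commutator $[x,t]=(xtx^{-1})t^{-1}$ lies in $R$ (as $R$ is normal) and in $K$, hence in $K\cap R\subseteq Z(G)\cap T_R$; consequently $xtx^{-1}=[x,t]\,t\in T_R$, so $x$ normalises $T_R$, and using that $T_R$ is abelian and $K\cap R$ is central one checks that $c_x\colon t\mapsto[x,t]$ is a continuous homomorphism from $T_R$ into $K\cap R$.

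It remains to prove $c_x\equiv e$, and this is the main obstacle. Note that multiplying $x$ by an element of $K$ does not change $[x,T_R]$, because $K$ centralises $T_R$; so the obstruction $c_x$ must be shown to be trivial outright, not merely removable. I would pass to the Lie quotients $G_\ap=G/K_\ap$, where $\psi_\ap(T_R)$ is a genuine torus, $\Aut(\psi_\ap(T_R))$ is discrete, and the group $\mathrm{Hom}(\psi_\ap(T_R),\psi_\ap(K\cap R))$ is torsion-free. The element $\psi_\ap(x)$ normalises $\psi_\ap(T_R)$ and induces an automorphism $\beta_\ap$ with $\beta_\ap(t)=(\psi_\ap\circ c_x)(t)\,t$; the key input I would establish or invoke is that the Weyl group $N_{G_\ap}(\psi_\ap(T_R))/Z_{G_\ap}(\psi_\ap(T_R))$ is finite, so that $\beta_\ap$ has finite order. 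Combined with torsion-freeness of the $\mathrm{Hom}$ group this forces $\psi_\ap\circ c_x=e$ for every $\ap$, and since $\bigcap_\ap K_\ap=\{e\}$ we conclude $c_x\equiv e$, i.e.\ $x\in Z_G(T_R)$. This completes the reverse inclusion and proves the third assertion.

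Finally, connectedness of $Z_G(T_R)$ I would deduce by a projective-limit argument that relies on the third part. Applying it with $K=K_\ap$ gives $\psi_\ap(Z_G(T_R))=Z_{G_\ap}(\psi_\ap(T_R))$, which is connected by Theorem 1.2 of \cite{MS} in the Lie group $G_\ap$. Since $Z_G(T_R)$ is closed and $\bigcap_\ap K_\ap=\{e\}$, one has $Z_G(T_R)=\varprojlim_\ap\psi_\ap(Z_G(T_R))$, a projective limit of connected groups with the canonical quotient maps as bonding maps, and such a limit is connected. Hence $Z_G(T_R)$ is connected, as required.
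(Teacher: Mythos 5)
Your assertions (i), (ii) and (iv), and the reduction you make in (iii), are all sound and in fact run parallel to the paper's own proof: the paper also gets $K\subset Z_G(T_R)$ from Lemma \ref{L1}\,(vii), also reduces the third assertion to showing that the set $A_K=\{x\in G\mid [x,t]\in K \mbox{ for all } t\in T_R\}$ equals $Z_G(T_R)$ (your observation that $[x,t]\in K\cap R\subset Z(G)\cap T_R$ and that $c_x$ is a homomorphism into a central subgroup is exactly the paper's passage from $A_L$ to $A_{Z^0(L)}$), and also obtains connectedness by the same projective-limit argument you give. The divergence is at the Lie-group heart of assertion (iii): the paper disposes of it by quoting the proof of Theorem 1.2 of \cite{MS}, where it is shown for a connected Lie group that $A_{Z^0(K)}=Z_G(T_R)Z^0(K)=Z_G(T_R)$, whereas you replace this citation by an argument resting on finiteness of the ``Weyl group'' $N_{G_\alpha}(\pi_\alpha(T_R))/Z_{G_\alpha}(\pi_\alpha(T_R))$.

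That replacement is where the genuine gap lies. Finiteness of $N_G(T)/Z_G(T)$ is classical for maximal tori of \emph{compact} (or reductive) groups, but for a torus sitting in the radical of a general connected Lie group it is not a quotable standard fact, and you neither prove it nor point to a source; you only say you ``would establish or invoke'' it. Worse, it is not an independent input: analysing which automorphisms of $\pi_\alpha(T_R)$ can possibly be induced by elements of the connected group $G_\alpha$ (e.g.\ via the permutation action on the weights of ${\rm Ad}$) one finds that the only candidates for elements of infinite order in this Weyl group are precisely the unipotent maps $t\mapsto\gamma(t)t$ with $\gamma$ valued in the central part of the torus --- that is, exactly the maps $\beta_\alpha$ you are trying to kill. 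So assuming finiteness essentially assumes the conclusion; the entire difficulty of the Lie-group case has been moved into the unproven ``key input''. (The input is in fact true --- one can show $N_{G_\alpha}(\pi_\alpha(T_R))=Z_{G_\alpha}(\pi_\alpha(T_R))$ by lifting to the universal cover, where the preimage of $\pi_\alpha(T_R)$ meets the central kernel in a full-rank lattice that conjugation fixes pointwise --- but that is a real argument which must be supplied, or else one cites \cite{MS} as the paper does.) A secondary, smaller point: even granting finiteness, ``$\beta_\alpha$ of finite order plus torsion-freeness of ${\rm Hom}$'' does not by itself force $\gamma=0$ (the automorphism $t\mapsto t^{-1}$ has order $2$ and equals ${\rm id}+\gamma$ with $\gamma(t)=t^{-2}$); you additionally need that $\gamma$ vanishes on its own image --- which holds here because the image is central --- so that $\beta_\alpha^m={\rm id}+m\gamma$, and only then does $m\gamma=0$ give $\gamma=0$. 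This step is fillable from what you set up, but it must be said.
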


\begin{proof} Note that for the radical $R$ of $G$, it is easy to see that $\psi(R)$ is the radical of $G/K$. Here,  
	$\psi(R)$ is isomorphic to $R/(R\cap K)$, where $R\cap K$, being abelian and normal in $G$, is central in $G$ and it is 
	contained in any maximal torus $T_R$, and hence $\psi(T_R)$ is a maximal torus in $\psi(R)$. The other assertions 
	follow easily if $T_R$ is central in $G$, as in that case, $Z_G(T_R)=G$ and $Z_{\psi(G)}(\psi(T_R))=\psi(G)$. 
	
	Now suppose $T_R$ is not central in $G$. By Lemma \ref{L1-cpt}\,(vii), we know that $R\subset Z_G(K)$. In particular, 
	$K\subset Z_G(R)\subset Z_G(T_R)$, and $Z_G(T_R)$ contains every compact normal subgroup of $G$.
	
	We now show that $\psi(Z_G(T_R))=Z_{\psi(G)}(\psi(T_R))$ and that $Z_G(T_R)$ is connected. It is easy to see   
	that $\psi(Z_G(T_R))\subset Z_{\psi(G)}(\psi(T_R))$. For any compact normal subgroup $M$ of $G$, let 
	$$
	A_M:=\{x\in G\mid xtx^{-1}t^{-1}\in M\mbox{ for all }  t \in T_R\}.$$ 
	Note that $M\subset A_M$ as $M$ is normal in $G$. Moreover, $Z_G(T_R)\subset A_M$, and hence, from the above, 
	we get that $A_M$ contains every compact normal subgroup of $G$. As $T_R$ is connected and $(M\cap R)^0=Z^0(M)$, 
	we get that 
	$$A_M=\{x\in G\mid xtx^{-1}t^{-1}\in Z^0(M)\mbox{ for all }  t \in T_R\}=A_{Z^0(M)}.$$ 
	
	We first assume that $G$ is a Lie group. As shown in the proof of Theorem 1.2 of \cite{MS}, we have that 
	$A_K=A_{Z^0(K)}=Z_G(T_R)Z^0(K)=Z_G(T_R)$. Therefore, $\psi(Z_G(T_R))=\psi(A_K)=Z_{\psi(G)}(\psi(T_R))$. Also,
	$Z_G(T_R)$ is connected (cf.\ \cite{MS}, Theorem 1.2). 
	
	Now suppose $G$ is not a Lie group. Let $\{K_\ap\}$, $G_\ap=G/K_\ap$ and  $\pi_\ap:G\to G_\ap$ be as in Remark \ref{Kap}. 
	As $G_\ap$ is a Lie group, and $Z^0(K)$  is central in $G$, we get that 
	\begin{eqnarray*}
		\pi_\ap(A_K)&=&\pi_\ap(A_{Z^0(K)})\cr
		&=&\{x\in G_\ap\mid xtx^{-1}t^{-1}\in \pi_\ap(Z^0(K))\mbox{ for all }t\in \pi_\ap(T_R)\}\cr
		&=&Z_{G_\ap}(\pi_\ap(T_R)).
	\end{eqnarray*}
	Therefore, $\pi_\ap(A_K)=\pi_\ap(A_{K_\ap})$, and hence 
	$A_K=A_K K_\ap=A_{K_\ap}$ for every $\ap$. Therefore, $A_K= \cap_\ap A_{K_\ap}=Z_G(T_R)$. Hence 
	$\psi(Z_G(T_R))=\psi(A_K)=Z_{\psi(G)}(\psi(T_R))$. Now $\pi_\ap(Z_G(T_R))=Z_{G_\ap}(\pi_\ap(T_R))$, and it 
	is connected for every $\ap$, hence we have that $Z_G(T_R)$ is connected.
\end{proof}

The following theorem is a generalisation of Theorem 1.2 of \cite{MS}, as we know from Lemma \ref{tori}, that the centraliser 
$Z_G(T_R)$ of $T_R$ in $G$ is connected, for any maximal compact subgroup $T_R$ of the radical $R$. It is known 
that any closed nilpotent subgroup of a connected locally compact group is compactly generated (cf.\ \cite{HN}) 
and hence has a unique maximal compact subgroup. For a Cartan subgroup $C$ of $G$, $C\cap R$ is normal in $C$ and 
its unique maximal compact subgroup is normal in $C$. We know that 
$C\cap R$ is connected (cf.\ Lemma \ref{rad-nil}), and it centralises $C\cap S$ where $C=(C\cap S)(C\cap R)$ for some 
Levi subgroup $S$ (cf.\ Theorem \ref{decomp}), which in turn imply that the maximal 
compact subgroup of $C\cap R$ is connected and central in $C$. 

\begin{theorem} \label{cartan-torus}  Let $G$ be a connected Lie group. Let $T_R$ be any maximal
	compact subgroup of the radical $R$. Then every Cartan subgroup of $Z_G(T_R)$ is a Cartan subgroup of $G$. 
	Conversely, for every Cartan subgroup $C$ of $G$, the following holds: if $M$ is the maximal compact $($connected$)$ 
	subgroup of $C\cap R$, then $M$ is the maximal torus of $R$ and $C$ is a Cartan subgroup of $Z_G(M)$.
\end{theorem}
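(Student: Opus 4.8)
The plan is to reduce both assertions to the connected Lie group case, Theorem 1.2 of \cite{MS}, exploiting Lie projectivity together with Lemma \ref{tori}, Theorem \ref{decomp} and Lemma \ref{properties}. For the forward implication I would write $G = \varprojlim_\ap G/K_\ap$ with $K_\ap$ compact normal, $\cap_\ap K_\ap = \{e\}$ and each $G_\ap := G/K_\ap$ a Lie group, with projections $\pi_\ap : G \to G_\ap$. By Lemma \ref{tori} every $K_\ap$ lies in $Z_G(T_R)$, the latter is connected, $\pi_\ap(Z_G(T_R)) = Z_{G_\ap}(\pi_\ap(T_R))$ is a closed (hence Lie) subgroup of $G_\ap$, and $\pi_\ap(T_R)$ is a maximal torus of the radical $R_\ap := \pi_\ap(R)$. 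Thus $Z_G(T_R) = \varprojlim_\ap \pi_\ap(Z_G(T_R))$, and given a Cartan subgroup $C$ of $Z_G(T_R)$, Proposition \ref{T1}(2a) applied to $Z_G(T_R)$ makes each $\pi_\ap(C)$ a Cartan subgroup of $Z_{G_\ap}(\pi_\ap(T_R))$. Theorem 1.2 of \cite{MS} then promotes $\pi_\ap(C)$ to a Cartan subgroup of the Lie group $G_\ap$, and a second use of Proposition \ref{T1}(2a), this time for $G$, yields that $C$ is a Cartan subgroup of $G$.

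For the converse it is enough to prove that $M$, the (connected) maximal torus of the connected nilpotent group $C \cap R$, is a maximal torus of $R$. Indeed, once this holds, $Z_G(M)$ is connected by Lemma \ref{tori} (taking $T_R = M$); since $M$ is central in $C$, as recorded just before the statement, we have $C \subset Z_G(M)$, and then Lemma \ref{properties}(3) shows at once that $C$ is a Cartan subgroup of $Z_G(M)$. So the entire difficulty lies in the maximality of $M$ in $R$.

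Here I would not argue through the inverse limit: the obstruction is that $\pi_\ap(M)$ is only visibly contained in the maximal torus of $R_\ap$ cut out by $\pi_\ap(C)$, and turning that containment into an equality at each finite level is precisely the maximality one is after. Instead I would descend to an honest Lie group. Let $K$ be the maximal compact normal subgroup of $G$; by Lemma \ref{L1}, $Z(K) \subset Z(G)$, so $Z(K)$ sits in every Cartan subgroup and Lemma \ref{central-cartan} lets me pass to $G/Z(K)$, after which $K = D(K^0)$ is connected semisimple with trivial centre and $G = L \times K$, where $L := Z_G^0(K)$ is a connected Lie group and $R \subset L$ is the radical of both $L$ and $G$. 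Writing $C = C_L \times C_K$ and using $R \cap K = \{e\}$, one gets $C \cap R = C_L \cap R$, so $M$ is the maximal torus of $C_L \cap R$ inside the genuine Lie group $L$; Theorem 1.2 of \cite{MS} applied to $L$ then gives that $M$ is a maximal torus of $R$. The one bookkeeping point is compatibility with the central quotient: the kernel $R \cap Z(K)$ is a compact subgroup of the connected nilpotent group $C \cap R$, hence is absorbed into its maximal compact subgroup $M$, so maximality of the image of $M$ in $R/(R \cap Z(K))$ pulls back to maximality of $M$ in $R$. This passage to the direct factor $L$, rather than the projective limit, is the crux of the proof and the step I expect to be the main obstacle.
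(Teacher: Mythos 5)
Your proposal is correct and follows essentially the same route as the paper: the forward direction via Lemma \ref{tori}, Proposition \ref{T1} and Theorem 1.2 of \cite{MS} at each Lie quotient, and the converse by passing to $G/Z(K)$, splitting $G=L\times K$ with $L=Z^0_G(K)$ a Lie group, and settling maximality of $M$ there. The only cosmetic deviations are that the paper quotes Goto's theorem (rather than Theorem 1.2 of \cite{MS}) for the maximal-torus fact in the Lie factor, and it concludes $T_R=M$ via conjugacy of maximal tori together with $Z(K)\subset C$, whereas you pull maximality back directly using $R\cap Z(K)\subset M$; both are valid.
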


\begin{proof} If $G$ is compact, then $R=T_R=Z^0(G)$ is compact and $G=Z_G(T_R)$, and the assertion holds trivially. Now 
	suppose $G$ is not compact. 
	Let $C'$ be a Cartan subgroup of $Z_G(T_R)$. Let $\{K_\ap\}$, Lie groups $G_\ap=G/K_\ap$ and $\pi_\ap:G\to G_\ap$ 
	be as in Remark \ref{Kap}. By Lemma \ref{tori}, $\pi_\ap(T_R)$ is a maximal torus in the radical $\pi_\ap(R)$ of $G_\ap$ and 
	$\pi_\ap(Z_G(T_R))=Z_{G_\ap}(\pi_\ap(T_R))$. By Proposition \ref{mod-cpt}\,(1), $\pi_\ap(C')$ is a Cartan subgroup of 
	$Z_{G_\ap}(\pi_\ap(T_R))$. Now by Theorem 1.2 of \cite{MS}, $\pi_\ap(C')$ is a Cartan subgroup 
	of $G_\ap$. Since this holds for every $\ap$, we get from Proposition \ref{mod-cpt}\,(2a) that $C'$ is a Cartan subgroup of $G$. 
	
	We now prove the converse. Let $C$ be a Cartan subgroup of $G$ and let $M$ be the maximal compact connected subgroup of 
	$C\cap R$. We first show that $M$ is a maximal torus in $R$. Let $\pi:G\to G/Z(K)$ be the natural projection, where $K$ is the 
	maximal compact normal subgroup of $G$. By Lemma \ref{L1-cpt}, $Z(K)$ is central in $G$, $\pi(K)$ is a compact connected 
	semisimple group with trivial center, and $\pi(G)=L\times \pi(K)$, where  $L:=Z^0_{\pi(G)}(\pi(K))$ is a Lie group. Moreover, 
	$\pi(R)\subset L$ and it is the radical of $\pi(G)$. By Lemma \ref{central-cartan}\,(2), $\pi(C)$ is a Cartan subgroup of $\pi(G)$, 
	and $\pi(C)=C_1\times C_2$, where $C_1$ (resp.\ $C_2$) is a Cartan subgroup of $L$ (resp.\ $\pi(K)$). As $L$ is a Lie group 
	and $\pi(C)\cap\pi(R)=C_1\cap\pi(R)$ is connected, and as $Z(K)\subset C$, we get that $\pi(C)\cap\pi(R)=\pi(C\cap R)$. 
	In particular, $\pi(M)$ is a maximal compact subgroup of $C_1\cap\pi(R)$, and it is a maximal torus of $\pi(R)$ 
	(cf.\ \cite{Go}, Theorem 3\,(ii)). As all the maximal tori in $R$ (resp.\ $\pi(R)$) are conjugate to each other,  by Lemma \ref{tori} 
	there exists a maximal torus $T_R$ in $R$ such that $\pi(T_R)=\pi(M)$. As $\ker\pi=Z(K)\subset C$, we get that 
	$T_R\subset C$, and hence $T_R=M$. Thus $M$ is a maximal torus of $R$. By Lemma \ref{tori}, $Z_G(M)$ is connected. 
	As $C\subset Z_G(M)$, $C$ is a Cartan subgroup of $Z_G(M)$ (cf.\ Lemma \ref{properties}\,(3)).
\end{proof}

\section{Cartan subalgebras}

The Lie algebra of a connected locally compact group can be defined as follows: 
A connected locally compact group $G$ is Lie projective and if $G=\varprojlim_\ap  G_\ap$ for Lie groups $G_\ap$, then the 
Lie algebra $L(G)$ of $G$ is defined as $L(G)=\varprojlim_\ap  L(G_\ap)$, the projective limit of $L(G_\ap)$, the Lie algebras 
of $G_\ap$, and it is independent of the choice of $\{G_\ap\}$ (see \cite{L} where it is called LP-Lie algebras). Note that $L(G)$ 
defined this way is isomorphic to the one defined in Ch.\ 2 of \cite{HoMo2}, where it is defined as the set of all continuous real
one-parameter subgroups of $G$ endowed with a Lie algebra structure and a topology of uniform convergence on compact sets. 
Since $L(G)$ is a projective limit of finite dimensional real Lie algebras, it is also called a pro-Lie algebra in \cite{HoMo2}.
A Lie algebra of a locally compact group $G$ is defined as the Lie algebra $L(G^0)$ of $G^0$. 

Note that if $\pi_\ap:G\to G_\ap$ denotes the natural projection and $\ker\pi_\ap=K_\ap$ is a compact normal subgroup 
of $G$ for each $\ap$, then $\du\pi_\ap:L(G)=\varprojlim_\ap L(G_\ap)\to L(G_\ap)$ is the corresponding projection  
(in terms of the definition of the Lie algebra in \cite{HoMo2}; if $X\in L(G)$ is a continuous real one-parameter 
subgroup of $G$, then $\pi_\ap\circ X$ is a continuous real one-parameter subgroup of $G_\ap$ and 
$\du\pi_\ap(X)=\pi_\ap\circ X\in L(G_\ap)$). Also, if $K_\ap\subset K_\beta$, then there exists a natural projection 
$\pi_{\ap\beta}:G_\ap\to G_\beta$ and the corresponding Lie algebra homomorphism 
$\du\pi_{\ap\beta}: L(G_\ap)\to L(G_\beta)$ such that $\pi_{\ap\beta}\circ\pi_\ap=\pi_\beta$
and $\du\pi_{\ap\beta}\circ\du\pi_\ap=\du\pi_\beta$.

A Lie algebra is said to be pro-nilpotent if it is a projective limit of nilpotent Lie algebras. A {\it Cartan subalgebra} 
of a pro-Lie algebra is a closed pro-nilpotent subalgebra that agrees with its own normaliser 
(see Definition 7.88 of \cite{HoMo2} for details, and for its existence see Theorem 7.93 of \cite{HoMo2}). 

In the following theorem, we prove that the Lie algebra of a Cartan subgroup exists and it is a Cartan subalgebra as defined 
in \cite{HoMo2}. Moreover, there is a one-to-one correspondence between Cartan subgroups and Cartan subalgebras. 
We will also show that every Cartan subalgebra of $L(G)$ is nilpotent. 

\begin{theorem}\label{cartan-subalgebra}
	Let $G$ be a connected locally compact group with the Lie algebra $\G$. Let $C$ be a Cartan subgroup of $G$. 
	Then the Lie algebra $L(C)$ of $C$ is a Cartan subalgebra of $L(G)$. Moreover, $L(C)$ is nilpotent. Conversely, 
	given a Cartan subalgebra  $\mathcal C$ of $L(G)$, there exists a unique Cartan subgroup $C$ of $G$ whose 
	Lie algebra is $\mathcal C$. In particular, all Cartan subalgebras of $L(G)$ are nilpotent. 
\end{theorem}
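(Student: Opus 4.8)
The plan is to work through the projective limit $G=\varprojlim_\ap G_\ap$, where $G_\ap:=G/K_\ap$ is a Lie group, $\pi_\ap:G\to G_\ap$ are the projections and $\du\pi_\ap:\G\to\G_\ap$ the induced maps, so that $\G=\varprojlim_\ap\G_\ap$ and any closed subalgebra of $\G$ is the projective limit of its $\du\pi_\ap$-images.

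\emph{Forward direction.} Let $C$ be a Cartan subgroup of $G$. By Proposition \ref{T1}\,(2a), $C_\ap:=\pi_\ap(C)$ is a Cartan subgroup of $G_\ap$ for each $\ap$; since $\pi_\ap|_C:C\to C_\ap$ is a quotient map one checks $\du\pi_\ap(L(C))=L(C_\ap)$, and by classical Lie theory this is a nilpotent, self-normalising Cartan subalgebra of $\G_\ap$. As $C=\varprojlim_\ap C_\ap$ we get $L(C)=\varprojlim_\ap L(C_\ap)=\{X\in\G\mid\du\pi_\ap(X)\in L(C_\ap)\text{ for all }\ap\}$, a closed pro-nilpotent subalgebra. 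To see it is self-normalising, take $X\in\G$ with $[X,L(C)]\subseteq L(C)$; applying $\du\pi_\ap$ and using $\du\pi_\ap(L(C))=L(C_\ap)$ gives $[\du\pi_\ap(X),L(C_\ap)]\subseteq L(C_\ap)$, whence $\du\pi_\ap(X)\in L(C_\ap)$ for every $\ap$ by self-normalisation in $\G_\ap$, so $X\in L(C)$. Thus $L(C)$ is a Cartan subalgebra. For the upgrade to genuine nilpotency I would use that $C$ is nilpotent with $l(C)=n<\infty$: each $C_\ap$ is a quotient of $C$, so $l(C_\ap)\le n$, and hence the nilpotency class of $L(C_\ap)=L(C_\ap^0)$ is bounded by $n$ uniformly in $\ap$. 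As an $(n+1)$-fold bracket vanishes in each $L(C_\ap)$ and brackets commute with the $\du\pi_\ap$, it vanishes in $L(C)=\varprojlim_\ap L(C_\ap)$, so $L(C)$ is nilpotent of class $\le n$.

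\emph{Converse and uniqueness.} Given a Cartan subalgebra $\mathcal C$ of $\G$, set $\mathcal C_\ap:=\du\pi_\ap(\mathcal C)$. Since $\ker\du\pi_\ap=L(K_\ap)$ is a closed ideal, the image of a Cartan subalgebra under this quotient is again a Cartan subalgebra, so each $\mathcal C_\ap$ is a Cartan subalgebra of the finite-dimensional $\G_\ap$ and hence the Lie algebra of a unique Cartan subgroup $C_\ap$ of $G_\ap$. For $K_\ap\subset K_\beta$ one has $\du\pi_{\ap\beta}(\mathcal C_\ap)=\mathcal C_\beta$, so $\pi_{\ap\beta}(C_\ap)$ is a Cartan subgroup of $G_\beta$ with Lie algebra $\mathcal C_\beta$, forcing $\pi_{\ap\beta}(C_\ap)=C_\beta$ by uniqueness in $G_\beta$. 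The compatible family $\{C_\ap\}$ then yields, by Proposition \ref{T1}\,(2b), a Cartan subgroup $C=\cap_\ap\pi_\ap^{-1}(C_\ap)$ of $G$ with $\pi_\ap(C)=C_\ap$. To identify $L(C)=\mathcal C$, note $L(C)=\varprojlim_\ap L(C_\ap)=\varprojlim_\ap\mathcal C_\ap\supseteq\mathcal C$; by the forward direction $L(C)$ is a nilpotent Cartan subalgebra, and $\mathcal C$, being nilpotent and self-normalising in $\G$, is a Cartan subalgebra of the nilpotent algebra $L(C)$. Since a nilpotent Lie algebra is its own unique Cartan subalgebra, $\mathcal C=L(C)$. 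Uniqueness of $C$ follows because any Cartan subgroup with Lie algebra $\mathcal C$ projects to a Cartan subgroup of $G_\ap$ with Lie algebra $\mathcal C_\ap$, hence to $C_\ap$, and a closed subgroup equals the intersection of the preimages of its projections. The final ``in particular'' claim is then immediate: every Cartan subalgebra $\mathcal C$ equals $L(C)$ for the $C$ just built, which is nilpotent.

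\emph{Main obstacle.} I expect the two delicate points to be: first, upgrading pro-nilpotency to nilpotency, which relies on the uniform bound $l(C_\ap)\le l(C)$ and on comparing the nilpotency length of a connected nilpotent Lie group with the nilpotency class of its Lie algebra; and second, the identification $\mathcal C=L(C)$, for which the clean route is to invoke that inside a nilpotent Lie algebra the only self-normalising subalgebra is the whole algebra, thereby avoiding direct manipulation of infinite brackets. The supporting facts I would need to pin down in the pro-Lie setting are that $\du\pi_\ap(\mathcal C)$ is a Cartan subalgebra (image under a quotient by a closed ideal) and that a closed subalgebra of $\G$ is the projective limit of its projections.
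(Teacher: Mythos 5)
Your proof is correct and follows essentially the same route as the paper's: project along the $\pi_\ap$, use Proposition \ref{T1}\,(2a)/(2b) together with the classical correspondence between Cartan subgroups and Cartan subalgebras at each Lie level, bound the nilpotency class uniformly by $l(C)$, and deduce uniqueness of $C$ from per-level uniqueness. The only differences are cosmetic: where you verify self-normalisation of $L(C)$, the compatibility $\pi_{\ap\beta}(C_\ap)=C_\beta$, and the identity $\mathcal C=L(C)$ by hand (the last via the fact that a proper subalgebra of a nilpotent Lie algebra is properly contained in its normaliser), the paper instead invokes Theorem 7.93 and Proposition 7.90 of \cite{HoMo2} and a centraliser argument based on $C_\ap=Z_{G_\ap}(C_\ap^0)C_\ap^0$.
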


\begin{proof}
	Let $C$ be a Cartan subgroup of $G$. Let $L(C)$ be the subalgebra of $L(G)$ such that it is the Lie algebra of $C^0$. 
	As $G$ is a projective limit of Lie groups $G_\alpha=G/K_\alpha$ where $K_\alpha$ are compact normal subgroups of $G$, 
	by Proposition \ref{mod-cpt} we have $C=\varprojlim_\ap CK_\alpha/K_\alpha$. Since $C$ is a projective limits of Lie groups
	$C_\ap:= CK_\alpha/K_\alpha$, the Lie algebra of $C$ is a pro-Lie algebra, 
	i.e.\  $L(C)=\varprojlim_\ap L(C_\alpha)$,  where $L(C_\ap)$ is a Lie subalgebra of $L(G_\ap)$ for each $\ap$. Now $L(C)$ is 
	a Cartan subalgebra by Theorem 7.93 of \cite{HoMo2}, as $L(C_\alpha)$ is a Cartan subalgebra for each $\alpha$. 
	As $C$ is nilpotent, the length $l(C)$ of the central series of $C$ is finite. Then  
	$l(C_\ap^0)\leq l(C_\ap)\leq l(C)$ for all $\ap$. Therefore, $l(L(C_\ap))=l(C_\ap^0)\leq l(C)$ for every $\ap$, where 
	$l(L(C_\ap))$ is the length of the central series of the Lie subalgebras in $L(C_\ap)$, and hence 
	$L(C)$ is nilpotent.
	
	Conversely, let $\mathcal{C}$ be a Cartan subalgebra of $L(G)$. Then its projection $\mathcal{C}_\alpha$ in $L(G_\ap)$ is a 
	Cartan subalgebra of $L(G_\ap)$ for each $\ap$, and $\mathcal{C}=\varprojlim_\ap  \mathcal{C}_\alpha$ 
	(cf.\ \cite{HoMo1}, Proposition 7.90). For each $\ap$, there exists a unique Cartan subgroup $C_\alpha$ of $G_\ap$  
	whose Lie algebra is $\mathcal{C}_\alpha$ and $C_\alpha=Z_{G_\ap}(C_\ap^0)C_\ap^0$ (cf.\ \cite{MS}, Lemma 3.9\,(3), 
	see also Corollary \ref{centraliser-conn} above). For any pair $\ap,\beta$ such that $K_\ap\subset K_\beta$, 
	since $\du\pi_{\ap\beta}(\mathcal C_\ap) =\mathcal C_\beta$ and the exponential map is surjective for any connected 
	nilpotent Lie group, we have that $\pi_{\ap\beta}(C_\ap^0)=C_\beta^0$. Now 
	$\pi_{\ap\beta}(Z_{G_\ap}(C_\ap^0))\subset Z_{G_\beta}(C_\beta^0)$. Hence we have that $\pi_{\ap\beta}(C_\ap)\subset C_\beta$. 
	By Theorem \ref{quotient} (see also Theorem 1.5 in \cite{MS}), $\pi_{\ap\beta}(C_\ap)$ is a Cartan subgroup of $G_\beta$,  
	and hence it is equal to $C_\beta$. By Proposition \ref{mod-cpt}\,(2b), $C=\cap_\ap\,\pi_\ap^{-1}(C_\ap)$ is a Cartan subgroup 
	of $G$ and $\pi_\ap(C)=C_\ap$ for every $\ap$. Now $\mathcal{C}=\varprojlim_\ap\mathcal{C}_\alpha$, 
	where $\mathcal{C}_\alpha=L(C_\ap)=L(\pi_\ap(C))$ for each $\ap$. Hence we get that $\mathcal C$ is the Lie algebra of $C$. From 
	the earlier part of the proof, it follows that $\mathcal C$ is nilpotent. Now $\pi_\ap(C)=C_\ap$ is a unique Cartan subgroup in the Lie group 
	$G_\ap$ such that $L(C_\ap)=\mathcal{C}_\ap=\du \pi_\ap (\mathcal{C})$, hence we get that $C$ is a unique Cartan subgroup of $G$ 
	such that $L(C)=\mathcal{C}$.
\end{proof}

	\section{Power maps, Cartan subgroups and weak exponentiality}

Let $G$ be a group. For $k\in\N$, the $k$-th power map $P_k:G\to G$ is defined by $g\mapsto g^k$ for all $g\in G$. 
In this section, we characterise the dense image of any power map in a connected locally compact 
group in terms of Cartan subgroups.

Note that a connected locally compact group $G$ has compact normal subgroups $K_\ap$ such that $G_\ap=G/K_\ap$ 
are Lie groups and $G=\varprojlim_\ap  G_\ap$, see Remark \ref{Kap}.. Let $\pi_\ap:G\to G_\ap$ be the natural projections.  
A subset $D$ of $G$ is dense in $G$ if and only if $\pi_\ap(D)$ is dense in $G_\ap$ for each $\ap$ (this is easy to show, see for 
example a proposition on p.\ 327 in \cite{HoMo2}). The following lemma can be proven easily using the proposition 
mentioned above, as $\pi_\ap(P_k(G))=P_k(G_\ap)$ for each $\ap$. 

\begin{lem}\label{proj-power}
	Let $G=\varprojlim_\ap G/K_\alpha$, a projective limit of Lie groups $G_\alpha=G/K_\alpha$ where $K_\alpha$ 
	are compact normal subgroups in $G$. Let $k\in\N$.
	Then $P_k(G)$ is dense in $G$  if and only if $P_k(G_\alpha)$ is dense in $G_\alpha$ for every $\alpha$.
\end{lem}

We now extend Theorem 1.6 of \cite{MS} to any connected locally compact group $G$ as follows:

\begin{prop}[stability under extensions]  \label{pk-quo} Let $H$ be a closed normal subgroup of a connected locally 
	compact group $G$. Let $k\in \N$. If $P_k(H)$ is dense in $H$ and $P_k(G/H)$ is dense in $G/H$, then $P_k(G)$ is 
	dense in $G$.
\end{prop}

\begin{proof} Let $\{K_\ap\}$, Lie groups $G_\ap=G/K_\ap$ and $\pi_\ap:G\to G_\ap$ be as in Remark \ref{Kap}. 
	Then we have that $G/HK_\ap$ (resp.\ $HK_\ap/K_\ap$) is isomorphic to $(G/K_\ap)/(HK_\ap/K_\ap)$ 
	(resp.\ $H/(H\cap K_\ap)$). As $P_k(H)$ is dense in $H$, we get that $P_k(HK_\ap/K_\ap)$ is dense in 
	$HK_\ap/K_\ap$. Moreover, as $P_k(G/H)$ is dense in $G/H$, we get that $P_k(G/HK_\ap)$ is dense in 
	$G/HK_\ap$. Now by Theorem 1.6 of \cite{MS}, we get that $P_k(G/K_\ap)$ is dense in $G/K_\ap$. 
	In particular, $\pi_\ap(P_k(G))$ is dense in $G_\ap$ for each $\ap$. By Lemma \ref{proj-power}, $P_k(G)$ is dense in $G$. 
\end{proof}

We recall that there exists a closed connected characteristic subgroup $G_s$ of $G$ containing the radical $R$ 
such that $G_s/R$ is compact and $G/G_s$ is a semisimple Lie group without compact factors (see \cite{HoMu}). 
Note that $G_s$ is the largest connected normal amenable subgroup of $G$ and is known as the 
amenable radical of $G$.

Let $S(C)$ be the union of all Cartan subgroups of a connected locally compact group $G$. It is known that 
$S(C)$ is dense in $G$ if $G$ is a Lie group (see e.g.\ Propositions 1.5 and 1.6 in \cite{Ho}). It is also 
true for any connected locally compact group $G$, as shown in the proof of the following theorem, which 
correlates the density of the image of a power map $P_k$ with the surjectivity of its restriction to Cartan subgroups. 
The theorem below is essentially known for Lie groups. Note that any connected locally compact nilpotent group $N$ 
is divisible. Indeed, if $K$ is a maximal compact subgroup of $N$, then $K$ is connected and central in $N$ and 
$N/K$ is a (simply) connected nilpotent Lie group. Since both $K$ and $N/K$ are divisible, we get that $N$ is divisible.

\begin{theorem} \label{power-dense}
	Let $G$ be a connected locally compact group. Let $k\in \N$. Then the following are equivalent.
	\begin{enumerate}
		\item[{$(1)$}] $P_k(G)$ is dense in $G$.
		\item[{$(2)$}] $P_k(C)= C$ for every Cartan subgroup $C$ of $G$.
		\item[{$(3)$}] $P_k(C/C^0)=C/C^0$ for every Cartan subgroup $C$ of $G$. 
		\item[{$(4)$}] $P_k(S(C))=S(C)$.
		\item[{$(5)$}]  $P_k(G/K)$ is dense in $G/K$ for a compact connected normal subgroup 
		$K$ of $G$.
		\item[{$(6)$}]  $P_k(G/R)$ is dense in $G/R$, where $R$ is the radical of $G$.
		\item[{$(7)$}] $P_k(G/G_s)$ is dense in $G/G_s$, where $G_s$ is as above.
		\item[{$(8)$}] $P_k(G/H)$ is dense in $G/H$, where $H$ is a closed connected normal 
		subgroup of $G$ such that $R\subset H$ and $H/R$ is compact.
	\end{enumerate}
\end{theorem}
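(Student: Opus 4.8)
The plan is to reduce everything to the Lie case, where the theorem is already known, by writing $G=\varprojlim_\ap G_\ap$ with $G_\ap=G/K_\ap$ and $\pi_\ap\colon G\to G_\ap$ the natural projections, and then to transport density and surjectivity through the projective limit by means of Lemma \ref{P1}, Proposition \ref{pk-quo} and Proposition \ref{T1}. First I would record, for later use in $(4)$, that the union $S(C)$ of all Cartan subgroups is dense in $G$: by Proposition \ref{T1} one has $\pi_\ap(S(C))=S(C_\ap)$, the union of all Cartan subgroups of $G_\ap$, which is dense in $G_\ap$ by the Lie-group case, and density in a projective limit is tested on each $G_\ap$. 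I would then prove the equivalences by the scheme $(1)\Rightarrow(2)\Rightarrow(4)\Rightarrow(1)$ together with $(2)\Leftrightarrow(3)$ for the Cartan conditions, and separately $(1)\Leftrightarrow(5),(6),(7),(8)$ for the quotient conditions.

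For the quotient cluster, $(1)$ gives each of $(5)$--$(8)$ at once, since the continuous surjection onto any quotient carries the dense set $P_k(G)$ onto a dense set. For the reverse implications I would exploit that the relevant kernels are divisible. In $(5)$ the subgroup $K$ is compact and connected, hence divisible, so $P_k(K)=K$ and Proposition \ref{pk-quo} yields $(1)$. For $(6)$ I would apply Lemma \ref{P1} to $G/R=\varprojlim_\ap G/RK_\ap$, note that $\pi_\ap(R)$ is the radical of $G_\ap$, and invoke the Lie-group equivalence of $(1)$ and $(6)$ in each $G_\ap$; the same reduction, identifying $\pi_\ap(G_s)$ and $\pi_\ap(H)$ as the corresponding characteristic subgroups of $G_\ap$, handles $(7)$ and $(8)$. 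Alternatively, since $G_s/R$ and $H/R$ are compact and connected, hence divisible, $(7)$ and $(8)$ follow from $(6)$ by one further application of Proposition \ref{pk-quo} inside $G/R$.

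For the Cartan cluster, $(2)\Rightarrow(3)$ is immediate, and $(2)\Rightarrow(4)$ holds because $P_k(S(C))=\bigcup_C P_k(C)=\bigcup_C C=S(C)$; then $(4)\Rightarrow(1)$ follows since $P_k(G)\supseteq P_k(S(C))=S(C)$ is dense. The implication $(3)\Rightarrow(2)$ rests on the fact that the connected nilpotent group $C^0$ is divisible, so that a solution modulo $C^0$ can be corrected to an exact $k$-th root by the standard lifting of divisibility through a nilpotent extension with divisible kernel. The remaining and most delicate implication is $(1)\Rightarrow(2)$: reducing to the Lie case gives $P_k(\pi_\ap(C))=\pi_\ap(C)$ for every $\ap$, by the Lie-group equivalence of $(1)$ and $(2)$ applied to the Cartan subgroup $\pi_\ap(C)$ of $G_\ap$ furnished by Proposition \ref{T1}; but surjectivity on each Lie quotient does not automatically lift to the limit.

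Here is where the main obstacle lies. To pass from level-wise surjectivity to the exact equality $P_k(C)=C$, I would fix $c\in C$ and consider the sets $F_\ap=\{x\in\pi_\ap(C)\mid x^k=\pi_\ap(c)\}$; these form an inverse system under the maps $\pi_{\ap\beta}$, each $F_\ap$ is nonempty by the Lie case, and each $F_\ap$ is compact, since $P_k$ is proper on the compactly generated nilpotent Lie group $\pi_\ap(C)$ (equivalently, $k$-th root sets in a compactly generated nilpotent group are compact). As an inverse limit of nonempty compact Hausdorff spaces is nonempty, $\varprojlim_\ap F_\ap$ contains a compatible family $(x_\ap)$, which is an element $x\in C=\varprojlim_\ap\pi_\ap(C)$ with $x^k=c$; hence $P_k(C)=C$. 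The one point requiring sustained care is the identification of the images $\pi_\ap(R)$, $\pi_\ap(G_s)$, $\pi_\ap(H)$ and $\pi_\ap(C)$ with the corresponding distinguished subgroups of $G_\ap$, which is exactly what Proposition \ref{T1}, Theorem \ref{quotient} and the structure of the radical supply.
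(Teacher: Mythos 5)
Your proposal is correct, and its global skeleton is the same as the paper's: reduce to the Lie quotients $G_\ap=G/K_\ap$ via Proposition \ref{T1} and Lemma \ref{P1}, handle the quotient conditions $(5)$--$(8)$ with Proposition \ref{pk-quo}, and get $(4)\Rightarrow(1)$ from density of $S(C)$ tested levelwise. Three local arguments differ, though. For $(1)\Rightarrow(2)$ the paper shows $P_k(C)K_\ap=CK_\ap$ for all $\ap$ and concludes $P_k(C)=C$ because $C$, being compactly generated and nilpotent, is root compact (Theorem 3.1.17 of \cite{He}), so $P_k(C)$ is closed; your inverse limit of the nonempty compact root sets $F_\ap$ is an equivalent repackaging of the same input, since compactness of $F_\ap$ is exactly root compactness of the compactly generated nilpotent Lie group $\pi_\ap(C)$ --- neither route is simpler than the other. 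For $(5)$--$(8)$ you invoke divisibility of compact connected groups directly, where the paper routes through Proposition \ref{abelian-conj} (connectedness of Cartan subgroups of $K$, $R$, $G_s$, $H$) and the implication $(2)\Rightarrow(1)$ applied to those subgroups; both work, and yours is marginally shorter. The one genuinely different idea is in $(3)\Rightarrow(2)$: the paper uses Corollary \ref{centraliser-conn}, $C=Z_G(C^0)C^0$, so every coset of $C^0$ has a representative $z$ centralising $C^0$, and then $c=z^k c_0$ with $c_0\in C^0=P_k(C^0)$ gives $c=(zy)^k$ in one line. You instead appeal to a ``standard lifting of divisibility through a nilpotent extension with divisible kernel'', i.e.\ to the theorem that $k$-radicability of nilpotent groups is closed under extensions (Chernikov/Warfield; it can be proved by noting that the abelianisation of $C$ is then $k$-divisible, that the lower central factors are quotients of tensor powers of the abelianisation, and inducting on the class via central extensions). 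That theorem is true, but it is not a formal triviality --- when $C^0$ is not central in $C$ the correction of a root modulo $C^0$ involves commutator terms such as $m^k[m,x]^{\binom{k}{2}}\cdots$ whose surjectivity in $m$ requires an argument --- so you should either cite it explicitly or use Corollary \ref{centraliser-conn} as the paper does. The trade-off is clear: your route makes $(3)\Rightarrow(2)$ independent of the Levi-decomposition machinery behind Corollary \ref{centraliser-conn}, at the cost of importing a nontrivial result from abstract nilpotent group theory; the paper's route keeps everything inside its own structure theory.
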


\begin{proof}
	Since $G$ is a connected locally compact group, we have $\{K_\ap\}$, connected Lie groups $G_\ap=G/K_\ap$, 
	$\pi_\ap:G\to G_\ap$ and  $G=\varprojlim_\ap G_\alpha$  as in Remark \ref{Kap}. 
	
	\smallskip
	\noindent $(1)\Rightarrow  (2):$ 
	Suppose that $P_k(G)$ is dense in $G$. Let $C$ be any Cartan subgroup of $G$. By Proposition \ref{mod-cpt}\,$(2a)$, 
	$C_\alpha:=\pi_\alpha(C)$ is a Cartan subgroup of $G_\alpha$. Now 
	$P_k(G_\alpha)$ is dense in $G_\ap$, and  by Theorem 1.1 of \cite{BM} we have $P_k(C_\alpha)= C_\alpha$ 
	for every $\ap$. Now $P_k(C)K_\ap=CK_\ap$ for all $\ap$. Since $C$ is compactly generated and nilpotent, 
	by Theorem 3.1.17 of \cite{Hey}, $C$ is strongly root compact. In particular, for any compact set $B\subset C$, 
	$B_r=\{y\in C\mid y^n\in B \mbox{ for some } n\in\N\}$ is relatively compact (cf.\ \cite{Hey}, Theorem 3.1.13). 
	Hence, it follows that $P_k(C)$ is closed. Therefore, $P_k(C)=C$. 
	
	\smallskip
	\noindent $(2)\Leftrightarrow (3):$ Note that $(2)\Rightarrow (3)$ is obvious. Let $C$ be any Cartan subgroup of $G$. 
	Suppose $(3)$ holds, i.e.\ $P_k(C/C^0)=C/C^0$. Since $C^0$ is connected and nilpotent, $P_k(C^0)=C^0$. Also, by 
	Corollary \ref{centraliser-conn}\,(1), $C=Z_G(C^0)C^0$. Then (3) implies that $P_k(Z_G(C^0))C^0=C$. As $C^0$ is   
	centralises $Z_G(C^0)$ we get that 
	$$
	P_k(C)=P_k(Z_G(C^0)C^0)=P_k(Z_G(C^0))P_k(C^0)=P_k(Z_G(C^0))C^0=C.$$ 
	Hence (2) holds. That is, $(3)\Rightarrow (2)$.
	
	\smallskip
	\noindent $(2)\Rightarrow  (4)$ is obvious. Now we show that $(4)\Rightarrow (1)$. Suppose $P_k(S(C))=S(C)$. It is enough 
	to show that $S(C)$ is dense in $G$. By Proposition \ref{mod-cpt}\,(1),  $\pi_\ap(S(C))$ is the union of 
	Cartan subgroups in  $G_\ap$, which is dense in $G_\ap$ for each $\ap$. Hence, by a proposition on p.\ 327 
	of \cite{HoMo2}, $S(C)$ is dense in $G$. Thus, $(1-4)$ are equivalent. 	
	
	\smallskip
	\noindent $(1)\Leftrightarrow (5):$ Here, $(1)\Rightarrow (5)$ is obvious. For the converse, suppose $(5)$ holds. 
	By Proposition \ref{abelian-conj}, the Cartan subgroup of $K$ is connected. Now using $(2)\Rightarrow (1)$ for
	$K$ instead of $G$, we get that $P_k(K)$ is dense in $K$. Now by Proposition \ref{pk-quo}, $P_k(G)$ is dense in $G$, 
	i.e.\ (1) holds. 
	
	\smallskip
	\noindent $(1)\Leftrightarrow (6):$ Note that $(1)\Rightarrow (6)$ is obvious. Since the Cartan subgroups of 
	the radical $R$ are connected by Proposition \ref{abelian-conj}, arguing as above, we get that $(6)\Rightarrow (1)$. 
	
	\smallskip
	\noindent $(1)\Leftrightarrow (7)$ and $(1)\Leftrightarrow (8)$ can be proven the same way as $(1)\Leftrightarrow (5)$ since
	the Cartan subgroups of $G_s$, as well as those of the subgroup $H$ as in $(8)$ are connected by Proposition \ref{abelian-conj}. 
\end{proof}

\begin{rem} As in the case of Lie groups, the density of the image of $P_k$ in a connected locally compact group is equivalent to 
	that of the restriction of $P_k$ to a Levi subgroup of it. More generally, if $G=SR$ is a Levi decomposition, with a Levi subgroup $S$, 
	where $S=S_0K_0=K_0S_0$, $S_0$ is a connected Lie group without any compact factors and $K_0$ is the largest compact 
	connected normal subgroup of $S$ (cf.\ \cite{Mat}), then $G_s$ as mentioned in Theorem \ref{power-dense}\,$(7)$ is the same as $K_0R$, 
	and it follows from Theorem \ref{power-dense} and Theorem \ref{decomp} that $P_k(G)$ is dense in $G$ if and only if $P_k(S_0)$ is 
	dense in $S_0$.
\end{rem}

An element $x$ in a connected locally compact group $G$ is said to be 
exponential if it is contained in a continuous real one-parameter subgroup of $G$. 
A subgroup $H$ of $G$ is said to be exponential (resp.\ weakly exponential), if every element in $H$ is exponential, 
(resp.\ the set of exponential elements in $H$ is dense in $H$). Any (weakly) exponential group is necessarily connected. 
It is known that any compactly generated locally compact nilpotent group $G$ is connected if and only if it is divisible. 
This is also equivalent to saying that such a $G$ is weakly exponential. More generally, we state the following 
which is known for Lie groups (see \cite{N} and \cite{BM}).

\begin{cor}\label{C-power-cartan}
	Let $G$ be a connected locally compact group. Then the following are equivalent.
	\begin{enumerate}
		\item[{$(i)$}] $G$ is weakly exponential.
		\item[{$(ii)$}] $P_k(G)$ is dense in $G$ for every $k\in\N$.
		\item[{$(iii)$}] Every Cartan subgroup of $G$ is connected.
	\end{enumerate}
\end{cor}

In the Corollary \ref{C-power-cartan}, $(i)\Rightarrow (ii)$ is obvious, $(ii)\Leftrightarrow (iii)$ follows easily from Theorem \ref{power-dense} 
and $(ii)\Rightarrow (i)$ follows from Lemma \ref{proj-power}, Corollary 1.3 of \cite{BM} and Lemma 3.2 of \cite{HoMu}. We will not go into 
details here. Note that every divisible element in a connected Lie group is exponential (cf.\ \cite{Mc}), and hence every connected 
nilpotent Lie subgroup is exponential. However, there are compact connected abelian groups which are not exponential. For example, 
the $p$-adic solenoid is a compact connected abelian group, but it is not exponential (see Examples 14.4 and 14.5 of \cite{HoMo2}, see 
also Examples \ref{non-Lie} and \ref{non-ab2}, where the respective center as well as Cartan subgroups are not exponential). 
Thus the condition (iii) in Corollary \ref{C-power-cartan} is equivalent to the condition that every Cartan subgroup is weakly exponential.

\medskip
\noindent{\bf Acknowledgements:} The authors would like to thank the anonymous referee for useful comments and suggestions which led to 
improvement in the presentation of the manuscript.

\vskip2mm

\bigskip

\end{document}